\newtheorem{lemma}{Lemma}[section]
\newtheorem{theorem}[lemma]{Theorem}
\newtheorem{remark}[lemma]{Remark}
\newtheorem{cor}[lemma]{Corollary}
\newtheorem{proposition}[lemma]{Proposition}
\newcommand{\N}{{\mathbb N}}
\newcommand{\Amo}{{\mathcal {\bf A}}}
\newcommand{\A}{ {\hat A}}
\newcommand{\Ro}{{\overline R}}
\newcommand{\Vo}{{\bf V}}
\newcommand{\So}{{\hat {\mathcal F}}}
\newcommand{\ad}{{\rm ad}}
\newcommand{\aV}{{\hat V}}
\newcommand{\aR}{{\hat R}}
\newcommand{\Var}{{\rm var}}
\newcommand{\Q}{{\mathbb Q}}
\newcommand{\F}{{\mathbb F}}
\newcommand{\Z}{{\mathbb Z}}
\newcommand{\im}{\rm im}
\newcommand{\lm}{\lambda}
\newcommand{\lmu}{\lambda_1}
\newcommand{\lmf}{\lambda_1^f}
\newcommand{\lmd}{\lambda_2}
\newcommand{\lmdf}{\lambda_2^f}
\newcommand{\lmt}{\lambda_3}
\newcommand{\al}{\alpha}
\newcommand{\bt}{\beta}
\newcommand{\gm}{\gamma}
\newcommand{\dl}{\delta}
\newcommand{\M}{\mathcal M}
\newcommand{\Mab}{\mathcal M(\al,\bt)}
\newcommand{\hW}{{\hat W}}
\newcommand{\hJ}{{\hat J}}
\newcommand{\ha}{{\hat a}}
\newcommand{\hw}{{\hat w}}
\newcommand{\hv}{{\hat v}}
\newcommand{\hu}{{\hat u}}
\newcommand{\bv}{{\bf v}}
\newcommand{\ba}{{\bf a}}
\newcommand{\bc}{{\bf c}}
\newcommand{\bw}{{\bf w}}
\newcommand{\bu}{{\bf u}}
\newcommand{\bs}{{\bf s}}
\newcommand{\bp}{{\bf p}}
\newcommand{\bq}{{\bf q}}
\title[]{$2$-generated axial algebras of Monster type}
\author{Clara Franchi, Mario Mainardis, Sergey Shpectorov}
\begin{document}
\maketitle
Clara Franchi, Dipartimento di Matematica e Fisica,
Universit\`a Cattolica del Sacro Cuore,
Via della Garzetta 48,
I-25133 Brescia, Italy,
clara.franchi@unicatt.it\\

Mario Mainardis, 
Dipartimento di Scienze Matematiche, Informatiche e Fisiche, 
Universit\`a degli Studi di Udine, via delle Scienze 206,
I-33100 Udine, Italy,\\
 mario.mainardis@uniud.it\\

Sergey Shpectorov, School of Mathematics,
University of Birmingham, 
Watson Building, Edgbaston,
Birmingham, B15 2TT, UK, s.shpectorov@bham.ac.hk\\

keywords: axial algebras, Griess algebra, Monster Group.

\begin{abstract}
We provide the basic setup for the project, initiated by Felix Rehren 
in~\cite{R}, aiming at classifying all $2$-generated axial algebras of 
Monster type $(\al,\bt)$ over a field $\F$.  Using this, we first show that every such algebra has dimension at most $8$, except for the case $(\al,\bt)=(2,\tfrac{1}{2})$, where the Highwater algebra provides examples of dimension $n$, for all $n\in \N\cup \{\infty\}$. 
We then classify all 2-generated axial 
algebras of Monster type $(\al,\bt)$ over $\Q(\al,\bt)$, for $\al$ and 
$\bt$ algebraically independent over $\Q$. Finally, we generalise the Norton-Sakuma Theorem to every primitive $2$-generated axial algebra of Monster type $(\frac{1}{4},\frac{1}{32})$ over a field of characteristic zero, dropping the hypothesis on the existence of a Frobenius form.  
\end{abstract}

\maketitle

\section{Introduction}

Primitive axial algebras of Monster type constitute a class of commutative non-associative algebras 
generated by idempotent elements called {\it axes}, whose adjoint action is 
semisimple and the eigenvectors satisfy a prescribed fusion law. Axial 
algebras were introduced by Hall, Rehren and Shpectorov~\cite{HRS,HRS1} in 
order to axiomatise key features of important classes of algebras, including 
the Majorana algebras~\cite{Mb}, weight-2 components of OZ-type 
vertex operator algebras, Jordan algebras, and Matsuo algebras.
 They are of 
particular interest for finite group theorists as a majority of the finite 
simple groups, including many sporadic simple groups, can be represented as 
automorphism groups of such algebras. For a motivation and a more detailed introduction to this subject we refer to the survey papers~\cite{surveyI}, and~\cite{surveyS}.

In this paper we provide the basic setup for an ongoing project, initiated by Rehren~\cite{R}, aimed at classifying $2$-generated primitive axial algebras of Monster type over fields of characteristic other than $2$.\footnote{Recall that, by an argument of M. Stout, in every primitive axial algebra of Monster type over a field of characteristic $2$ the $\alpha$ and $\beta$ eigenspaces of the adjoint actions of the axes are trivial and the algebra is associative (see~\cite[Lemma 2.1]{HW}).} Such a classification is fundamental  for the understanding and application of this theory (see~\cite{Novosibirsk}). While this paper was in preparation, the  classification of $2$-generated primitive \textit{symmetric}\footnote{Recall that a $2$-generated axial algebra of Monster type is \textit{symmetric} if the map that swaps the two generating axes extends to an automorphism of the entire algebra (see~\cite{FMS2}).} axial algebras of Monster type  has been fully completed by Yabe~\cite{Yabe}, Franchi, Mainardis, and McInroy~\cite{HW5, FMM}. The present paper now provides foundations for a more general 
approach that does not distinguish between the symmetric and non-symmetric case.

We begin by revising Sections 2 and 3 of Rehren's paper~\cite{R}. In Section~\ref{nuovasec} we discuss some properties of endomorphisms of modules over a commutative associative ring with identity, which might be of independent interest. These results are used in  Sections~\ref{2}-\ref{universal}, where we define, for a fixed fusion law, the category of axial algebras with $k$-marked generators and give a formal construction of the initial object in this category, filling some gaps in Rehren's proof (in particular regarding torsion freeness of axes). In Section~\ref{table} we specialise the above construction to the case of primitive $2$-generated 
algebras of Monster type $(\al, \bt)$. These fall naturally into three cases ({\it Rehren's Trichotomy}): 
\medskip 
\begin{enumerate}
\item [-]  {\it  regular case $\al\not \in \{2\bt, 4\bt\}$}, considered by Rehren~\cite[Theorem~3.7]{R} and reproved here in a simpler way;
 \item [-] {\it  exceptional case $\al=2\bt$}, which has been completely solved in~\cite{FMS2}; 
\item [-] {\it  exceptional case $\al=4\bt$}, treated in this paper (Theorem~\ref{newth}).
\end{enumerate}
\medskip 

Combining the above results we obtain 
\begin{theorem}\label{Felix}
Every $2$-generated primitive axial algebra of Monster type $(\alpha, \beta)$ over a field $\F$ of characteristic other than $2$ has dimension at most $8$, provided $(\alpha, \beta)\neq (2, \tfrac{1}{2})$.
\end{theorem}

This bound is best possible: examples of $8$-dimensional algebras can be found in~\cite[Table 3]{surveyS} for each of the above three cases.
The case $(\alpha, \beta)=(2,\frac{1}{2})$ is truly exceptional, since the Highwater algebra~\cite{HW} and its characteristic $5$ cover~\cite{FMM} are examples of infinite dimensional $2$-generated symmetric primitive axial algebras of Monster type $(2,\frac{1}{2})$ and have quotients of dimension $n$, for every $n\in \N$ (see~\cite{HW5}). 

In the regular case, we provide explicit formulas (or a reference to a code for computing them, when too large to be displayed) for the structure constants of the initial object. These formulas, taken from the arXiv first version of this paper, have been used in~\cite{turner1, turner2}. Some of these also appeared, with an alternative formulation, in~\cite{R} (note that  a pair of them, namely those in Lemmas~3.2 and~3.3 of~\cite{R}, contain misprints that we have corrected). The analogous formulas in the case $\al=2\bt$ can be found in~\cite{FMS2}.


In Section~\ref{generic} we consider further in detail the regular case. Denote by $\F_0$ the prime subfield of $\F$ and let $\F_0(\alpha, \beta)[x, y, z, t]$ be the polynomial ring in $4$ variables over $\F_0(\alpha, \beta)$. Let $\mathbb A_\F^4$ be the $4$-dimensional affine space over $\F$. For a subset $X$ of $\F[x, y, z, t]$, denote  by $\Var(X)$ 
the set of the common zeros in $\mathbb A_\F^4$ of the elements in $X$. Finally 
let ${\mathcal M_r}(2, \F)$ be a set of representatives of the isomorphism classes of  $2$-generated primitive axial algebras of Monster type $(\alpha, \beta)$ over $\F$ with  $\alpha\not \in \{2\beta, 4\beta\}$. We define four polynomials $p_1$, $p_2$, $p_3$, $p_4\in \F_0(\alpha, \beta)[x, y, z, t]$. These polynomials are much too large to be displayed here, but they can be computed with the code~\cite[genericsakuma.s]{code}.
We prove the following result.

\begin{theorem}\label{nec}
Assume $\F$ is a field of characteristic other than $2$. Then there is a map $$\xi\colon {\mathcal M_r}(2, \F)\to \mathbb A_\F^4$$ 
such that, 
\begin{enumerate}
\item ${\im} (\xi)\subseteq {\Var}(\{p_1, p_2, p_3, p_4\})$;
\item for every $P$ in $\im (\xi)$, there exists an element $V_P$ of  ${\mathcal M_r}(2, \F)$ such that every element in $\xi^{-1}(P)$ is a quotient of $V_P$.
\end{enumerate} 
\end{theorem}

This means that every $V\in   {\mathcal M_r}(2, \F)$ is determined, up to quotients, by a $4$-tuple of parameters $(\lmu, \lmf, \lmd, \lmdf)$ in $\F^4$ which must be a common root of the polynomials $p_1$, $p_2$, $p_3$, and $p_4$.  We leave open the question for which values of $\al$ and $\bt$ these polynomials are algebraically independent. An answer to this question could be interesting, since, in such a case, the set  ${\Var}(\{p_1, p_2, p_3, p_4\})$, hence the choices for $(\lmu, \lmf, \lmd, \lmdf)$,  would be finite.

Since every $2$-generated primitive axial algebra $V$ of Monster type~$(\al, \bt)$ contains two large symmetric subalgebras $V_e$ (the \textit{even subalgebra} of $V$) and $V_o$ (the \textit{odd subalgebra} of $V$) (see Remark~\ref{evenodd} for the definitions of $V_e$ and $V_o$),  combining the classification of the symmetric algebras with Theorem~1.1 in~\cite{axet} one can get a list of  possibile pairs $(V_e, V_o)$, which turns to be  finite except for the case $(\al, \bt)=(2,\frac{1}{2})$. As a consequence, under the hypotheses of Theorem~\ref{nec}, we get that $\xi^{-1}(P)$ can be nonempty only if the last two coordinates of $P$ vary within a finite set, which can be explicitly computed. This observation provides a useful tool towards the full classification of $2$-generated primitive axial algebras of Monster type (this strategy has been successfully employed in~\cite{FMS2}, using the corresponding version of Theorem~\ref{nec}, for the case $\al=2\bt$).

In the last two sections we apply the above strategy in two relevant cases. Namely, in Section 7 we classify  the $2$-generated primitive axial algebras of Monster type $(\alpha, \beta)$ defined over the field $\Q(\al, \bt)$, with $\al$ and $\bt$ algebraically independent indeterminates over $\Q$. Referring to~\cite{HRS1} for the definition of the algebras of type $1A$, $2B$, $3C(\eta)$, $\eta\in \F$ and denoting by $3A(\al,\bt)$ the algebra of dimension $4$ defined in~\cite[Table~9]{R} for $\al\neq \frac{1}{2}$, we prove the following result.

\begin{theorem}\label{symmetric} 
Let $V$ be a $2$-generated primitive axial algebra of Monster type $(\al, \bt)$ over the field $\Q(\al, \bt)$,  with $\al$ and $\bt$ algebraically independent indeterminates over $\Q$. Then we have one of  the following:
\begin{enumerate}
\item $V$ is the trivial algebra $\Q(\al, \bt)$ of type $1A$;
\item $V$ is an algebra of type $2B$;
\item $V$ is an algebra of Jordan type $\al$ of type $3C(\al)$;
\item $V$ is an algebra of Jordan type $\bt$ of type $3C(\bt)$;
\item $V$ is an algebra of dimension $4$ of type $3A(\al, \bt)$. 
\end{enumerate}
In particular, $V$ is symmetric.
\end{theorem}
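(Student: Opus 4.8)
The plan is to invoke Theorem~\ref{nec} and then solve the resulting polynomial system explicitly over $\Q(\al,\bt)$. Since $\al$ and $\bt$ are algebraically independent over $\Q$, neither $\al=2\bt$ nor $\al=4\bt$ can hold (each would be an algebraic relation), so the hypotheses of Theorem~\ref{nec} are met and we sit in the generic case of Section~\ref{generic}. Accordingly, any such $V$ is, up to homomorphic image, determined by a quadruple $(x_0,y_0,z_0,t_0)\in\Q(\al,\bt)^4$ that is a common zero of the four polynomials in the set $T\subseteq\F_0(\al,\bt)[x,y,z,t]$, where here $\F_0=\Q$. The classification thus reduces to describing the $\Q(\al,\bt)$-rational points of the affine variety $\mathcal{Z}(T)$ and matching each of them with a concrete algebra.

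First I would make the four generators of $T$ explicit, reading off their exact form from the structure-constant computation of Section~\ref{generic}; these are polynomials whose coefficients are rational functions in $\al$ and $\bt$. The decisive feature of the present setting is that we seek zeros in $\Q(\al,\bt)$ rather than in an algebraic closure: because $\al$ and $\bt$ are algebraically independent, any polynomial identity that the coordinates $x_0,\dots,t_0$ are forced to satisfy must hold \emph{identically} in $\al,\bt$, hence collapses to a system of coefficient equations over $\Q$. I expect this to eliminate all the ``large'' branches of $\mathcal{Z}(T)$ — those producing algebras of dimension $5,\dots,8$ (the generic bound being $8$ by Theorem~\ref{Felix}) — since their occurrence would demand a genuine algebraic relation between $\al$ and $\bt$, contradicting independence. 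What should survive is precisely the finitely many $\Q(\al,\bt)$-rational zeros corresponding to the listed algebras.

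The next step is to analyse each surviving zero in turn, computing the dimension and the products of the corresponding quotient of the universal algebra. I would pick out the zero giving the one-dimensional algebra $\Q(\al,\bt)$ (type $1A$), the zero giving the two-dimensional algebra with orthogonal axes (type $2B$), the two zeros giving the three-dimensional algebras of Jordan type $\al$ and of Jordan type $\bt$ (types $3C(\al)$ and $3C(\bt)$), and the zero giving the four-dimensional algebra of~\cite[Table~9]{R} (type $3A(\al,\bt)$, the constraint $\al\neq\frac12$ being automatic for an indeterminate $\al$), using \cite{HRS1} and \cite{R} to pin down the isomorphism type in each case. A point requiring care is to confirm that no two distinct zeros yield isomorphic algebras and, conversely, that every $\Q(\al,\bt)$-rational zero has been accounted for, so that the list is genuinely exhaustive.

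The main obstacle I anticipate is the solution of $T=0$ over $\Q(\al,\bt)$: one must show that the only rational points are the five expected ones, and in particular that the quantities arising in the larger branches fail to be squares — or otherwise fail to be rational functions — in $\Q(\al,\bt)$. This is exactly where algebraic independence does the real work, and a careful Gr\"obner-basis or resultant analysis, combined with an irrationality argument in $\Q(\al,\bt)$, will be needed to close off the unwanted branches. Once the five algebras have been identified, the final assertion that $V$ is symmetric is immediate, since each of $1A$, $2B$, $3C(\al)$, $3C(\bt)$, and $3A(\al,\bt)$ is symmetric.
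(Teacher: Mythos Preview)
Your plan coincides with the paper's: invoke Theorem~\ref{nec}, determine the $\Q(\al,\bt)$-rational points of $Z(T)$, and match each with one of the listed algebras. The paper carries this out via Lemmas~\ref{solutions} and~\ref{solgen} (the irreducible degree-$5$ factor of the resultant supplies precisely your anticipated ``irrationality argument''), arriving at exactly five quadruples, and then uses explicit identities (the vanishing of certain elements $\bar d_i$, $\bar D_i$) to bound $\dim V$ case by case.

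There is, however, a genuine gap in your outline. Theorem~\ref{nec} asserts only that $V$ is determined by the quadruple \emph{up to homomorphic images}: the quadruple pins down a maximal algebra, and $V$ is some quotient of it. Your step ``computing the dimension and the products of the corresponding quotient of the universal algebra'' at best identifies that maximal algebra with one of $1A$, $2B$, $3C(\al)$, $3C(\bt)$, $3A(\al,\bt)$; it does not exclude that $V$ is a \emph{proper} quotient. The paper closes this gap via Lemma~\ref{simple}: the algebras $3C(\al)$, $3C(\bt)$ and $3A(\al,\bt)$ are simple over $\Q(\al,\bt)$ (using the Frobenius-form criterion of~\cite{KMS}), so they admit no proper nonzero quotient. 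For $1A$ and $2B$ the issue is handled directly, since $V$ must contain the generating axes and hence already has the required dimension. Without a simplicity argument your proof would establish only that $V$ is a quotient of one of the five algebras, which is strictly weaker than the statement; you should add this step explicitly.
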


Recall that the fusion law  $\M( \alpha, \beta)$ admits a $\Z_2$-grading $\M( \alpha, \beta)_+=\{1,0,\al\}$ and $\M( \alpha, \beta)_-=\{\bt\}$ and this implies that every axis $a$ induces an automorphism of the algebra $\tau_a$ called Miyamoto involution. The Miyamoto group is the  group generated by all Miyamoto involutions (see~\cite{KMS}). 

\begin{cor}\label{3-transposition}
Let $W$ be a primitive finitely generated axial algebra of Monster type $(\al, \bt)$ over the field $\Q(\al, \bt)$,  with $\al$ and $\bt$ independent indeterminates over $\Q$. Then the Miyamoto group of $W$ is a group of $3$-transpositions.
\end{cor}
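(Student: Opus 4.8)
The plan is to verify directly the defining property of a group of $3$-transpositions: I will exhibit a conjugation-closed (normal) set $D$ of involutions generating the Miyamoto group $G$ of $V$ such that $|de|\le 3$ for all $d,e\in D$. The natural choice is $D=\{\tau_a : a\ \text{an axis of } V,\ \tau_a\neq{\rm id}\}$. Since every $g\in G\le {\rm Aut}(V)$ permutes the axes and $g\tau_ag^{-1}=\tau_{g(a)}$, the set $D$ is closed under conjugation, and $G=\langle D\rangle$ by definition of the Miyamoto group; so everything reduces to bounding $|\tau_a\tau_b|$ for an arbitrary pair of axes $a,b$.

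Fix such a pair and let $U$ be the subalgebra generated by $a$ and $b$. As a subalgebra, $U$ is $\ad_a$- and $\ad_b$-invariant, so it splits as the direct sum of its intersections with the eigenspaces of $\ad_a$ (resp. $\ad_b$); in particular $\tau_a(U)=U=\tau_b(U)$ and restriction defines a homomorphism $\langle\tau_a,\tau_b\rangle\to{\rm Aut}(U)$ onto the Miyamoto group of $U$. Now $U$ is a $2$-generated primitive axial algebra of Monster type $(\al,\bt)$ over $\Q(\al,\bt)$, so Theorem~\ref{symmetric} identifies it as one of $1A$, $2B$, $3C(\al)$, $3C(\bt)$, $3A(\al,\bt)$. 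Reading off these five cases, the order $n$ of $(\tau_a\tau_b)|_U$ is $1$ for $1A$, $2B$, $3C(\al)$ (where no $\bt$-eigenspace occurs on $U$, so both involutions restrict to the identity) and $3$ for $3C(\bt)$ and $3A(\al,\bt)$ (where the Miyamoto group of $U$ is $S_3$).

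Next I pass from this local order $n$ to the global order $m=|\tau_a\tau_b|$. Put $\rho=\tau_a\tau_b$. Because $\rho^n$ is trivial on $U$ and $a\in U$, we have $\rho^n(a)=a$, hence $\rho^n$ commutes with $\tau_a$; on the other hand the dihedral relation $\tau_a\rho\tau_a=\rho^{-1}$ gives $\tau_a\rho^n\tau_a=\rho^{-n}$. Comparing the two, $\rho^{2n}={\rm id}$, and since the order on $U$ divides $m$ we get $n\mid m$, so $m\in\{n,2n\}$. When $n=1$ this yields $m\le 2\le 3$ at once; when $n=3$ it yields $m\in\{3,6\}$, so the entire statement comes down to excluding the doubling $m=6$ in the two cases $U\cong 3C(\bt)$ and $U\cong 3A(\al,\bt)$.

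I expect this exclusion to be the main obstacle. Ruling out $m=6$ is exactly the assertion that $\langle\tau_a,\tau_b\rangle$ acts faithfully on $U$, equivalently that $z:=\rho^3$ --- a central involution of $\langle\tau_a,\tau_b\rangle$ that is trivial on $U$ --- is in fact trivial on all of $V$. I would establish this from the explicit description of the algebras $3C(\bt)$ and $3A(\al,\bt)$ and of their automorphism groups produced for Theorem~\ref{symmetric} (see also~\cite{R,KMS}): in each, the three axes already realise the full group $S_3$ of Miyamoto involutions, with no extra central element, so the relations forcing $\rho^3={\rm id}$ hold globally and not merely on $U$. Granting this, every pair of Miyamoto involutions has product of order at most $3$, and hence $(G,D)$ is a group of $3$-transpositions, as claimed.
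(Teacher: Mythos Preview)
The paper states this corollary without proof, leaving it as an immediate consequence of Theorem~\ref{symmetric}; your approach via the $2$-generated subalgebra $U=\langle a,b\rangle$ is exactly the intended one, and the reduction to $m\in\{n,2n\}$ via $\rho^n(a)=a\Rightarrow[\rho^n,\tau_a]=1$ is correct.

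There is, however, a real gap in your final step. You want to exclude $m=6$ in the cases $U\cong 3C(\bt)$ and $U\cong 3A(\al,\bt)$, and you appeal to the fact that the Miyamoto group of $U$ is $S_3$ ``with no extra central element''. But that statement is about ${\rm Aut}(U)$, not about the restriction map $\langle\tau_a,\tau_b\rangle\to{\rm Aut}(U)$: knowing the image is $S_3$ says nothing about the kernel, and a priori nothing prevents $\rho^3$ from being a nontrivial automorphism of $V$ that happens to act trivially on $U$. So as written the argument does not close.

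The fix is short and uses precisely the information produced in the proof of Theorem~\ref{symmetric}. In both cases $3C(\bt)$ and $3A(\al,\bt)$ one has the equality of \emph{elements} $a_{-1}=a_2$, i.e.\ $b^{\tau_a}=a^{\tau_b}$; since $U\subseteq V$, this is an equality in $V$, not merely in $U$. Equal axes give equal Miyamoto involutions on $V$, so $\tau_{a_{-1}}=\tau_{a_2}$ in ${\rm Aut}(V)$. Now compute directly in $\langle\tau_a,\tau_b\rangle\le{\rm Aut}(V)$:
\[
\rho^3=(\tau_a\tau_b\tau_a)(\tau_b\tau_a\tau_b)=\tau_{\,b^{\tau_a}}\cdot\tau_{\,a^{\tau_b}}=\tau_{a_{-1}}\tau_{a_2}={\rm id}_V,
\]
so $m=3$. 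This replaces your ``granting this'' paragraph and completes the proof.
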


Finally, in Section 8 we consider the case where $(\al, \bt)=(\frac{1}{4},\frac{1}{32})$, which includes the subalgebras of the Griess algebra (whose automorphism group is the Monster group $M$) that are generated by two axes. These subalgebras were first described by Norton (see~\cite[Theorem 3, Section 7]{N96}\footnote{Note that,  in order to to have only integers in the algebra rules,  Norton used a different scaling:  e.g. his 2A-axes are 8 times the Majorana axes (see~\cite[p. 230]{ATLAS}).}) and fall into nine $M$-orbits. These orbits are labelled  accordingly to the notation in the ATLAS for the conjugacy class in $M$ of the product of the $2A$-involutions associated to the generating axes. They are displayed here as the nodes of McKay's extended $E_8$ graph:

\begin{picture}(80,60)(-60,-38)
\put(-5,5){$1A$}
\put(0,0){\circle{4}}
\put(2,0){\line(1,0){26}} 
\put(25,5){$2A$}
\put(30,0){\circle{4}}
\put(32,0){\line(1,0){26}} 
\put(55,5){$3A$}
\put(60,0){\circle{4}}
\put(62,0){\line(1,0){26}} 
\put(85,5){$4A$}
\put(90,0){\circle{4}}
\put(92,0){\line(1,0){26}} 
\put(115,5){$5A$}
\put(120,0){\circle{4}}
\put(122,0){\line(1,0){26}}
\put(145,5){$6A$}
\put(150,0){\circle{4}}
\put(152,0){\line(1,0){26}}
\put(175,5){$4B$}
\put(180,0){\circle{4}}
\put(182,0){\line(1,0){26}}
\put(205,5){$2B$}
\put(210,0){\circle{4}}
\put(150,-2){\line(0,-1){26}}
\put(150,-30){\circle{4}}
\put(155,-32){$3C$}
\end{picture}

In~\cite{S} Sakuma proved that Norton's classification holds more generally for certain vertex operator algebras (including the Moonshine module). Because of that, these algebras are now known as Norton-Sakuma algebras. Later this classification was extended to $2$-generated primitive axial algebras of Monster type $(\frac{1}{4},\frac{1}{32})$ over a field of characteristic zero endowed with a Frobenius form (see~\cite{IPSS10, HRS}).
Here we prove that result in full generality, showing that the assumption on the existence of a Frobenius form can be dropped:
\begin{theorem}\label{thm}
Every $2$-generated primitive axial algebra of Monster type $(\frac{1}{4},\frac{1}{32})$ over a field of characteristic zero is a Norton-Sakuma algebra.
\end{theorem}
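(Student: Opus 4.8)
The plan is to reduce the statement, via Theorem~\ref{nec}, to solving a single explicit system of polynomial equations, and then to match its solutions one-by-one against the known Norton--Sakuma algebras.

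First I would record that the pair $(\alpha,\beta)=(\frac14,\frac1{32})$ lies in the generic range: since $2\beta=\frac1{16}$ and $4\beta=\frac18$ are both distinct from $\alpha=\frac14$, we have $\alpha\notin\{2\beta,4\beta\}$, so Theorem~\ref{nec} (together with the dimension bound of Theorem~\ref{Felix}) applies. As $\F$ has characteristic zero its prime subfield is $\F_0=\Q$, and because $\frac14,\frac1{32}\in\Q$ we have $\F_0(\alpha,\beta)=\Q$. Hence the set $T$ of Theorem~\ref{nec} becomes a concrete quadruple of polynomials in $\Q[x,y,z,t]$, obtained by substituting $\alpha=\frac14$ and $\beta=\frac1{32}$. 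By Theorem~\ref{nec}, every $2$-generated primitive axial algebra of Monster type $(\frac14,\frac1{32})$ over $\F$ is determined, up to homomorphic images, by a common zero $(x_0,y_0,z_0,t_0)\in\F^4$ of $T$; so it suffices to determine the common-zero set $V(T)$ and to identify the algebra attached to each of its points.

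The computational heart is to solve $T=0$. I would compute a Gr\"obner basis of the ideal generated by $T$ (say in a lexicographic order) to verify that $V(T)$ is zero-dimensional and to enumerate its points. The expected outcome is illuminating when compared with Theorem~\ref{symmetric}: over $\Q(\alpha,\beta)$ with algebraically independent $\alpha,\beta$ the same system admits only the five generic solutions $1A$, $2B$, $3C(\alpha)$, $3C(\beta)$, $3A(\alpha,\beta)$, whereas at the special value $(\frac14,\frac1{32})$ the polynomials acquire extra common zeros, and these additional points should be exactly the ones producing $2A$, $4A$, $4B$, $5A$ and $6A$. For each point of $V(T)$ I would reconstruct the structure constants through the parametrisation underlying Theorem~\ref{nec}, compute the dimension of the resulting algebra together with any proper quotient forced by the ``up to homomorphic images'' clause, and compare the multiplication table with the standard tables of the Norton--Sakuma algebras (as in~\cite{IPSS10} or~\cite{S}). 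Conversely, each of the nine Norton--Sakuma algebras is itself a $2$-generated primitive axial algebra of Monster type $(\frac14,\frac1{32})$, hence supplies a point of $V(T)$, so the correspondence is onto all nine types.

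Finally I would address fields of definition. A point of $V(T)$ contributes an algebra over $\F$ precisely when its coordinates lie in $\F$; since some Norton--Sakuma algebras (notably $5A$, whose structure constants involve $\sqrt5$) are defined only over proper extensions of $\Q$, over a general characteristic-zero field $\F$ exactly those of the nine types definable over $\F$ occur. The main obstacle is the explicit solution of the system $T=0$ and, more subtly, the bookkeeping around the homomorphic-image clause: one must check that no spurious zero yields a configuration violating idempotency, primitivity or the fusion law, and that the quotients realising the lower-dimensional types ($2A$, $4A$, $4B$, $2B$) are matched to the correct algebra rather than double-counted.
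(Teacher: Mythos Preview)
Your overall strategy matches the paper's: apply Theorem~\ref{nec} at the specialisation $(\alpha,\beta)=(\tfrac14,\tfrac1{32})$, solve the resulting polynomial system over $\Q$, and identify each solution with a Norton--Sakuma algebra. The paper does exactly this, computing the resultant in $z$ of the polynomials in $T_s$ (which splits into linear factors over $\Q$) and then listing the nine points of $Z(T)\subset\Q^4$ explicitly.

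Two points in your proposal should be corrected. First, your field-of-definition worry is misplaced: all nine solutions lie in $\Q^4$, and all nine Norton--Sakuma algebras are defined over $\Q$ (in particular $5A$ has rational structure constants in the standard spanning set; no $\sqrt5$ enters). Hence every one of the nine types occurs over any field of characteristic zero, and there is no ``only those definable over $\F$'' restriction. Second, your handling of the ``up to homomorphic images'' clause is vaguer than necessary. The paper dispatches this cleanly: by \cite[Corollary~4.13]{KMS} each Norton--Sakuma algebra other than $2B$ is simple, so once you verify (via the explicit multiplication from Remark~\ref{struct}, computed in GAP) that the algebra attached to each point has dimension at most that of the corresponding Norton--Sakuma algebra, equality is forced. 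This replaces the case-by-case quotient bookkeeping you sketch.
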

  
In order to prove Theorems~\ref{symmetric} and~\ref{thm} we made use of the computer algebra system Singular~\cite{Singular} to solve certain systems of polynomial equations. The codes are available in~\cite{code}. 
\section{Semisimple endomorphisms of $R$-modules} \label{nuovasec}
Let  $R$ be a commutative associative ring with identity. In this section we extend to $R$-modules some well known results about endomorphisms of vector spaces. The main difference is that,  when considering $R$-modules instead of vector spaces, it is no longer true, in general, that eigenvectors relative to different eigenvalues are linearly independent.

Let $V$ be an $R$-module.  For $\xi\in End(V)$, $\lambda \in R$, and $\Gamma \subseteq R$, define 
$$V_\lambda^\xi:=\{v\in V| \xi(v)=\lambda v\} \mbox{ and } V_{\Gamma}^\xi:=\sum_{\lambda\in \Gamma}V_\lambda^\xi.$$
If $V$ is also an $R$-algebra and $a\in V$,  denote by $\ad_a$ the endomorphism of $V$ induced by right multiplication by $a$: 
$$\begin{array}{rccc}
\ad_a:& V&\to &V\\
& x&\mapsto& xa
\end{array}.
$$
In this case, we'll write simply $V_\lambda^a$ and $V_\Gamma^a$ instead of $V_\lambda^{\ad_a}$ and $V_\Gamma^{\ad_a}$, respectively.

Two elements $\alpha$ and $\beta$ of $R$ are called \textit{distinguishable}  if $\alpha-\beta$ is a unit in $R$. In the remainder of this section we shall  assume that $\Gamma$ is a finite set of pairwise distinguishable elements of $R$ or $|\Gamma|=1$. 

\begin{remark} \label{homd}
Since, every nontrivial homomorphism of unitary rings maps invertible elements to invertible elements, it also maps pairs of distinguishable elements into pairs distinguishable elements. 
\end{remark}
Let $R[x]$ be the ring of polynomials over $R$ with indeterminate $x$. The following result is an obvious generalisation of the Remainder Theorem.
\begin{lemma}\label{Ruffini}
If $g\in R[x]$ satisfies  $g(\lambda)=0$ for every $\lambda \in \Gamma$ and $\deg g<|\Gamma|$, then $g=0$.
\end{lemma}
\begin{proof}
We proceed by induction on $|\Gamma|$. If $|\Gamma|=1$, then $g$ is a constant and since the value of $g$ is zero in at least one element (the element of $\Gamma$), it must be $g=0$. Suppose $|\Gamma|>1$.  Assume by contradiction that $g$ is not the zero polynomial and set $k:=\deg g$. Then $k<|\Gamma|$. Let $\lambda \in \Gamma$, then, since $x-\lambda$ is monic, there exist $q,r\in R[x]$ such that 
$$g=q\cdot (x-\lambda)+r,
$$
 with $r=g(\lambda)=0$.  Thus $g=q\cdot (x-\lambda)$, whence $\deg q=k-1<|\Gamma\setminus \{\lambda\}|$.
Also, for $\mu \in \Gamma\setminus \{\lambda\}$, we have
$$0=g(\mu)=q(\mu)\cdot (\mu-\lambda).
$$ 
Since $\mu$ and $\lambda$ are distinguishable, $(\mu-\lambda)$ is a unit and so $q(\mu)=0$. By the inductive hypothesis, it follows that $q=0$, whence also $g=0$, a contradiction.\end{proof}

For $\mu \in \Gamma$, define 
\begin{equation}\label{fmu}
f_\mu :=\prod_{\lambda\in {\Gamma}\setminus \{\mu\}} (x-\lambda), 
\end{equation}
and 
\begin{equation}\label{f}
f:=\prod_{\lambda\in {\Gamma}} (x-\lambda), 
\end{equation} 
clearly 
$$
f=(x-\mu)f_\mu 
$$
 for every $\mu \in \Gamma$.
Note that, since elements of $\Gamma$ are pairwise distinguishable, $f_\mu(\mu)$ is  a unit in $R$.  
\begin{cor}\label{poly1}
With the above notation
$$
\sum_{\mu \in \Gamma} \frac{1}{f_\mu(\mu)} f_\mu=1.
$$
\end{cor}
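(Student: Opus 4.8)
The plan is to recognise this as the ring-theoretic version of the fact that the Lagrange interpolation basis sums to $1$, and to reduce it to Lemma~\ref{Ruffini}. Set
$$
g:=\sum_{\mu \in \Gamma} \frac{1}{f_\mu(\mu)} f_\mu - 1\in R[x],
$$
which is legitimate because the remark immediately preceding the corollary guarantees that each $f_\mu(\mu)$ is a unit, so the coefficients $1/f_\mu(\mu)$ exist in $R$. The goal is then simply to show $g=0$, and the strategy is to verify the two hypotheses of Lemma~\ref{Ruffini}: that $\deg g<|\Gamma|$ and that $g$ vanishes at every element of $\Gamma$.

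First I would record the degree bound. By~\eqref{fmu} each $f_\mu$ is a product of $|\Gamma|-1$ monic linear factors, hence is monic of degree $|\Gamma|-1$; subtracting the constant $1$ does not raise the degree, so $\deg g\le |\Gamma|-1<|\Gamma|$. Next I would evaluate the summands at an arbitrary $\nu\in\Gamma$. The key observation is that for $\mu\neq\nu$ the product defining $f_\mu$ runs over $\Gamma\setminus\{\mu\}$, which contains $\nu$, so it includes the factor $(\nu-\nu)=0$ and therefore $f_\mu(\nu)=0$. Thus in the sum $\sum_{\mu}\frac{1}{f_\mu(\mu)}f_\mu(\nu)$ only the term $\mu=\nu$ survives, contributing $\frac{1}{f_\nu(\nu)}f_\nu(\nu)=1$. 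Consequently $g(\nu)=1-1=0$ for every $\nu\in\Gamma$.

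With both hypotheses established, Lemma~\ref{Ruffini} forces $g=0$, which is exactly the claimed identity. There is no genuine obstacle here: the argument is the classical interpolation computation, and the only point that needs the distinguishability hypothesis on $\Gamma$ is the invertibility of $f_\mu(\mu)$, which is precisely what makes the coefficients meaningful over a general ring rather than a field. The one thing I would be careful to phrase correctly is that the vanishing of the cross terms $f_\mu(\nu)$ for $\mu\neq\nu$ uses only that $\nu$ itself lies in the index set $\Gamma\setminus\{\mu\}$, and requires no cancellation or unit hypothesis at that step.
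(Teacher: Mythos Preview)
Your proof is correct and follows exactly the same approach as the paper: define $g:=\sum_{\mu}\frac{1}{f_\mu(\mu)}f_\mu-1$, check $\deg g<|\Gamma|$ and $g(\nu)=0$ for all $\nu\in\Gamma$, then invoke Lemma~\ref{Ruffini}. You have simply spelled out in more detail the degree bound and the vanishing of the cross terms, which the paper leaves as ``clearly''.
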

\begin{proof}
Define 
$$g:=\sum_{\mu \in \Gamma} \frac{1}{f_\mu(\mu)} f_\mu-1,
$$ then $\deg g<|\Gamma|$ and clearly $g(\lambda)=0$ for every $\lambda\in \Gamma$. Hence, by Lemma~\ref{Ruffini}, $g=0$.
\end{proof}

For $\xi\in End(V)$, $\mu\in \Gamma$, let 
\begin{equation}\label{proj}
\pi_\mu^\xi:=\frac{1}{f_\mu(\mu)}f_\mu(\xi).
\end{equation}
Obviously, $\pi_\mu^\xi\in End(V)$.

\begin{proposition}\label{equivalence}
Let $V$ be an $R$-module. For every $\xi\in End(V)$, the following statements are equivalent:
\begin{enumerate}
\item 
$
V=\bigoplus _{\lambda \in \Gamma} V_\lambda^\xi
$;
\item $V=V_{\Gamma}^\xi$;
\item  $f(\xi)(V)=0$.
\end{enumerate}
Moreover, if these hold, then, for every $v\in V$, we have $v=\sum_{\mu\in\Gamma}v_{\mu}$ where
$$v_\mu:=\pi_\mu^\xi(v)\in V_\mu^\xi.
$$
(In other words, $\pi_\mu^\xi$ is the projection of $V$ onto $V_\mu^\xi$ with respect to the decomposition in {\it (1)}, in particular for $w\in V_\mu^\xi$, $\pi_\mu^\xi(w)=w$).
\end{proposition}
\begin{proof}
{\it (1)} implies {\it (2)} by the definition of $V_{\Gamma}^\xi$. Since $f(\xi) (V_\lambda^\xi)=\{0\}$ for every $\lambda\in \Gamma$, {\it (2)} implies {\it (3)}. Suppose {\it (3)} is satisfied. Then, for every  $\mu \in \Gamma$, and $v\in V$,
\begin{eqnarray*}
0&=&(f(\xi))(v)=\left (\prod_{\lambda\in {\Gamma}} (\xi-\lambda)\right )(v)\\
&=&(\xi-\mu)\left (\prod_{\lambda\in {\Gamma\setminus\{\mu\}}} (\xi-\lambda)\right )(v)\\
&=&(\xi-\mu)((f_\mu(\xi))(v)),
\end{eqnarray*}
whence $(f_\mu(\xi))(v)\in V_\mu^\xi$. Set
$$v_\mu:=\pi_\mu^\xi(v).
$$  
By Corollary~\ref{poly1}, 
$$id_V=
\sum_{\mu \in \Gamma} \pi_\mu^\xi
$$
and so 
$$
v=\sum_{\mu \in \Gamma} \pi_\mu^\xi(v)=\sum_{\mu\in \Gamma} v_\mu .
$$
Now, assume $v=\sum_{\mu \in \Gamma} w_\mu$ for some $w_\mu \in V_\mu^\xi$. Since   
$$
\pi_\mu^\xi(w_\lambda)=\delta_{\lambda\mu}w_\mu
$$ 
(where $\delta_{\lambda\mu}$ is the Kronecker delta), we get 
$$
v_\mu=\pi_\mu^\xi(v)=\sum_{\mu \in \Gamma}\pi_\mu^\xi(w_\lambda)=w_\mu,
$$
giving {\it (1)}.
\end{proof}

\section{Primitive axial algebras} \label{2}

\subsection{Fusion laws}
Recall~\cite{DSV} that a {\it fusion law} is a pair $(\mathcal F, \ast)$ such that $\mathcal F$ is a set and $\ast$ is a map from the cartesian product ${\mathcal F}\times {\mathcal F}$ to the power set $2^{\mathcal F}$. 
A {\it morphism} between  two fusion laws 
$(\mathcal F_{1}, \ast_1)$ and $(\mathcal F_{2}, \ast_2)$ is a map 
$$
\phi\colon {\mathcal F_{1}} \to {\mathcal F_{2}}
$$ 
such that, for $\alpha,\beta\in {\mathcal F_{1}}$, 
$$
\phi(\alpha\ast_1 \beta)\subseteq \phi(\alpha) \ast_2\phi(\beta).
$$
An {\it isomorphism} of fusion laws is a bijective morphism such that its inverse is also a morphism. 
A  fusion law $(\mathcal F, \ast)$ is said to be {\it finite} if ${\mathcal F}$ is a finite set.  In this paper we deal with fusion laws $(\mathcal F, \ast)$ where $\mathcal F$ is a  finite set containing the spectrum of the adjoint action of an idempotent element in an $R$-algebra. Therefore, we assume $1_R\in \mathcal F\subseteq R$ and $1_R\in 1_R\ast 1_R$.  Without loss of generality, we may also assume that  $0_R\in \mathcal F$.  Further, for every morphism $\phi$ of fusion laws, we'll assume that $1^\phi=1$ and $0^\phi=0$.  More generally, when considering morphisms between fusion laws, one may want to preserve some possible algebraic relations between the elements of the set $\mathcal F$. To this aim, for two fusion laws $\mathcal F$ and $\mathcal F^\prime$, let $F$ (resp. $F^\prime$) be the subring generated by $\mathcal F$ (resp. $\mathcal F^\prime$). We call a morphism of fusion laws $\phi\colon \mathcal F\to \mathcal F^\prime$ an {\it algebraic morphism} if it is the restriction to $\mathcal F$ of a ring homomorphism $F \to F^\prime$.  
In particular, let $I$ be an ideal of $R$ and define
\begin{equation}\label{quotientfusion}
\mathcal F_{R/I}:=\{ \mu+I\:|\: \mu\in \mathcal F\}.
\end{equation}

\begin{remark}\label{mor}
With the above notation, assume the elements of $\mathcal F$ are pairwise distinguishable and $I\neq R$. Then, by Remark~\ref{homd}, $|\mathcal F_{R/I}|=|\mathcal F|$, so, for every $\tilde{\mu}\in \mathcal F_{R/I}$, there exists a unique $\mu\in \mathcal F\cap \tilde{\mu}$. Setting, 
for every $\mu$ and $\delta$ in $\mathcal F$, 
$$
(\mu+I) \ast_{R/I} ( \delta+I)= \{ \lambda+I\:|\:\lambda \in \mu\ast \delta \}
$$
we obtain a map
\begin{equation}\label{asti}
\ast_{R/I} :\mathcal F_{R/I} \times \mathcal F_{R/I} \to 2^{\mathcal F_{R/I}}
\end{equation}
such that  the canonical projection $R\to R/I$ induces an algebraic isomorphism between $ (\mathcal F, \ast)$ and $({\mathcal F}_{R/I}, \ast_{R/I} )$.
\end{remark}

\subsection{Axial algebras}
Let $V$ be an algebra over a ring $R$ and $ (\mathcal F, \star)$ a fusion law with $\mathcal F\subseteq R$. 
An element $a\in V$ is said to be an $\mathcal F$-{\it axis} (or simply {\it axis} when $\mathcal F$ is clear from the context) if
\begin{enumerate}
\item [Ax0] $a$ is an idempotent, that is $a^2=a$;
\item[Ax1] $V=V_{\mathcal F}^a$;
\item[Ax2] $V_\lambda^a V_\mu^a \subseteq V_{\lambda\star \mu}^a$ for every $\lambda, \mu \in \mathcal F$. 
\end{enumerate}
Further, $a$ is called {\it primitive} if, 
\begin{enumerate}
\item[Ax3]   $V_1^a=Ra$.
\end{enumerate}
An {\it axial algebra over $R$  with generating set $A$ and fusion law $(\mathcal F, \star)$} is a quadruple 
$$(R,V, A, (\mathcal F, \star))$$
such that 
\begin{enumerate}
\item $R$ is an associative commutative ring with identity $1$;
\item $\mathcal F$ is a subset of $R$ containing $1$ and $0$; 
\item $V$ is a commutative non associative $R$-algebra;
\item $A$ is a set of $\mathcal F$-{\it axes}.
\end{enumerate} 
An axial algebra $(R,V, A, (\mathcal F, \star))$ is called {\it primitive} if every element of $A$ is primitive.

A {\it Frobenius} axial algebra is an axial algebra $(R,V, A, (\mathcal F, \star))$ endowed with an associative bilinear form $\kappa:V\times V\to R$ such that the map $a\mapsto \kappa(a,a)$ is constant on the set of axes.

\begin{remark}\label{associative}
By Pierce decomposition, a commutative associative unitary algebra generated by idempotents is an axial algebra with respect to the fusion law $(\mathcal F_a, \circ)$, where $\mathcal F_a=\{0,1\}$ and 
$$
1\circ 1=\{1\}, \:\:1\circ 0=0\circ 1=\emptyset,  \:\:0\circ 0=\{0\}.
$$
 \end{remark}
 
In the sequel we will consider only fusion laws $(\mathcal F, \star)$ satisfying the conditions
\begin{enumerate}
\item [F1] $\:\:\:\:\:1\in 1\star 1$ 
\item [F2] $\:\:\:\:\:0\in 0\star 0$.
\end{enumerate}

Thus, we have

\begin{lemma}\label{any}
Any commutative associative unitary algebra $V$ generated by a set $A$ of idempotents is an axial algebra with respect to any fusion law $(\mathcal F, \star)$.
\end{lemma}
\begin{proof}
Let $a\in A$. Then $V_\mu^a=\{0\}$ for every $\mu\in \mathcal F\setminus \{1,0\}$. So, by Remark~\ref{associative}, $a$ trivially satisfies axioms Ax0, Ax1, Ax2. 
\end{proof}

Let $(R,V, A, (\mathcal F, \star))$ be an axial algebra and 
 assume the elements of $\mathcal F$ are pairwise distinguishable. As in the proof of  Proposition~\ref{equivalence}, for every $v\in V$, denote by $v_1$ the projection of $v$ into $V_1^a$ with respect to the decomposition of $V$ into $\ad_a$-eigenspaces. If $V$ is primitive, we have 
 $$v_1=\lambda_a(v)a$$ 
 for some $\lambda_a(v) \in R$ which is generally not unique. On the other hand, if  the annihilator ideal $$Ann_R(a):=\{r\in R| ra=0\}$$ 
of $a$ in $R$ is trivial, then $\lambda_a(v)$ is unique, and we say that $a$ is a {\it free} axis. Clearly this condition is satisfied when $R$ is a field. 
As an immediate consequence we have  

\begin{lemma}\label{lambdafunction}
Let $(R,V, A, (\mathcal F, \star))$ be a primitive axial algebra and assume that the elements of $\mathcal F$ are pairwise distinguishable and the axes in $A$ are free. Then, for every $a\in A$, there is a well defined $R$-linear map 
\begin{equation}
\label{lambda}
\begin{array}{rcccc}
\lambda_a & \colon & V& \to & R\\
&& v &\mapsto & \lambda_a(v)
\end{array}
\end{equation}
such that every $v\in V$ decomposes uniquely as
\begin{equation}\label{justapply}
v=\lambda_a(v)a+\sum_{\mu\in \mathcal F\setminus \{1\}}v_\mu, 
\end{equation}
with $v_\mu\in V_\mu^a$.
\end{lemma}
We call the map $\lambda_a$ the {\em projection map}  associated to the axis $a$. Note that is $V$ is a Frobenius axial algebra with bilinear map $\kappa$, then for every $v\in V$, $\lambda_a(v)=\kappa(a,v)$ (see~\cite{HRS}). As we shall see, the projection maps will play a crucial r\^ole in the classification of primitive axial algebras.

We say that an axis $a$ of $V$ is {\it weak primitive} if every element $v$ in $V$ can be decomposed as follows:
\begin{equation}\label{wp1}
v=l_a(v)a+\sum_{\mu\in \mathcal F\setminus \{1\}}v_\mu
\end{equation}
where $l_a(v)$ is an element of $R$ which depends on $v$ and $a$, and $v_\mu\in V^a_\mu$. 
Note that in general the decomposition in (\ref{wp1}) and $l_a(v)$ are not uniquely determined by $v$. If $a$ is weak primitive for every $a\in A$, we say that $V$ is {\it weak primitive}.
\begin{lemma}\label{pp}
If the elements of $\mathcal F$ are pairwise distinguishable, in particular, if $R$ is a field, then weak primitivity is equivalent to primitivity. 
\end{lemma}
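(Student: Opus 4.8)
The plan is to bring in pairwise distinguishability at a single point, namely to upgrade the eigenspace sum guaranteed by Ax1 into an \emph{internal direct sum} by means of Lemma~\ref{equivalence}; once that is in hand I expect both implications to be essentially bookkeeping.

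First I would record that primitivity $\Rightarrow$ weak primitivity needs no distinguishability at all: by Ax1 every $v\in V$ lies in $V_{\mathcal S}^a=\sum_{\mu\in\mathcal S}V_\mu^a$, hence admits an expression $v=\sum_{\mu\in\mathcal S}v_\mu$ with $v_\mu\in V_\mu^a$, and the hypothesis $V_1^a=Ra$ forces $v_1=l_a(v)\,a$ for some scalar $l_a(v)$, which is exactly~(\ref{wp1}).

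For the converse I would fix $a\in\mathcal A$ and note that Ax1 is precisely condition \emph{(2)} of Lemma~\ref{equivalence} for $\xi=\ad_a$ and $\Gamma=\mathcal S$; since $\mathcal S$ is pairwise distinguishable, the lemma gives $V=\bigoplus_{\lambda\in\mathcal S}V_\lambda^a$, and in particular $V_1^a\cap V_{\mathcal S\setminus\{1\}}^a=0$. As $a$ is idempotent, $a\in V_1^a$, so $Ra\subseteq V_1^a$ and only the reverse inclusion is at issue. Taking $v\in V_1^a$, weak primitivity provides $v=l_a(v)\,a+\sum_{\mu\neq 1}v_\mu$ with $v_\mu\in V_\mu^a$; then $v-l_a(v)\,a$ lies in $V_1^a$ (since $v$ and $a$ do) and in $V_{\mathcal S\setminus\{1\}}^a$ at once, so it vanishes by directness of the sum. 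Thus $v=l_a(v)\,a\in Ra$, yielding $V_1^a=Ra$, i.e. Ax3.

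I expect the only genuine obstacle to be the passage from Ax1 to the direct sum decomposition: over a general ring, eigenvectors for distinct eigenvalues need not be independent (as stressed at the opening of this section), so the triviality of the intersection $V_1^a\cap V_{\mathcal S\setminus\{1\}}^a$ is not automatic and is exactly what Lemma~\ref{equivalence} supplies. The field case then follows immediately, since distinct elements of a field differ by a unit and are therefore distinguishable.
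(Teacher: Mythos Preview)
Your proof is correct and follows essentially the same route as the paper: both invoke Lemma~\ref{equivalence} to pass from Ax1 to the direct sum decomposition, and then use the resulting trivial intersection $V_1^a\cap V_{\mathcal S\setminus\{1\}}^a=0$ to force $v-l_a(v)\,a=0$ for $v\in V_1^a$. The paper dismisses the forward implication as trivial, whereas you spell it out, but otherwise the arguments coincide.
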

\begin{proof}
Trivially, primitivity implies weak primitivity. Since the elements $\mathcal{S}$ are pairwise distiguishable the other direction is immediate from Lemma 2.3, namely suppose that $V$ is weak primitive, fix $a\in A$ and let $v_1\in V^a_1$. Then, by weak primitivity, there exist $l\in R$, $v_\mu\in V^a_\mu$, such that 
$$
v_1=la+\sum_{\mu\in \mathcal F\setminus \{1\}}v_\mu.
$$
Hence 
$$\sum_{\mu\in \mathcal F\setminus \{1\}}v_\mu=v_1-la\in (\sum_{\mu\in \mathcal F\setminus \{1\}}V^a_\mu )\cap V^a_1,
$$
and the last intersection is  trivial by Proposition~\ref{equivalence}. Thus $v_1=la\in Ra$.
\end{proof}

\begin{lemma}\label{condition}
Let $V$ be an algebra over a ring $R$ and $ (\mathcal F, \star)$ a fusion law with $\mathcal F\subseteq R$ and let $a$ be an idempotent in $V$.  Assume that the elements of $\mathcal F$ are pairwise distinguishable, $Ann_R(a)=\{0_R\}$, and $\ad_a$ is semisimple with spectrum $\mathcal F$. Then, 
\begin{enumerate}
\item $a$ is an $\mathcal F$-axis if and only if, for every  $\gamma, \delta\in \mathcal F$, and $v, w\in V$ 
\begin{equation}\label{cond2}
 (\prod_{\eta \in \gamma\star \delta}(\ad_a-\eta )) (f_\gamma(\ad_a)( v) \cdot f_\delta(\ad_a) (w))=0.
\end{equation}
\item if $a$ is an axis, then $a$ is primitive if and only if, for every  $w\in V$, there exists $l_w\in R$ such that
 \begin{equation}\label{cond1}
 f_1(\ad_a)(w-l_w a) =0.
 \end{equation}
In particular, if  $a$ is primitive, $l_w=\lambda_w(a)$.
\end{enumerate}
\end{lemma}
\begin{proof}
To prove $(1)$, note that, since $\ad_a$ is semisimple, for every $v,w\in V$, we can write
\begin{equation}\label{sost}
f_\gamma(\ad_a)( v) \cdot f_\delta(\ad_a) (w)=\sum_{\epsilon \in \gamma\star\delta}u_\epsilon+\sum_{\theta \in \mathcal F\setminus \gamma\star\delta}u_\theta,
\end{equation}
 with $u_\epsilon \in V_\epsilon^a$, $u_\theta \in V_\theta^a$. 
Then, 
\begin{eqnarray}\label{autovettori}
\lefteqn{\left (\prod_{\eta \in \gamma\star \delta}(\ad_a-\eta )\right )\left (f_\gamma(\ad_a)( v) \cdot f_\delta(\ad_a) (w)\right )=} \\
&& =\left (\prod_{\eta \in \gamma\star \delta}(\ad_a-\eta )\right )\left (\sum_{\epsilon \in \gamma\star\delta}u_\epsilon+\sum_{\theta \in \mathcal F\setminus \gamma\star\delta}u_\theta\right ) \nonumber\\
&&=\sum_{\epsilon \in \gamma\star\delta}\left (\prod_{\eta \in \gamma\star \delta}(\ad_a-\eta )u_\epsilon\right )+\sum_{\theta  \in \mathcal F\setminus \gamma\star\delta}\left (\prod_{\eta \in \gamma\star \delta}(\ad_a-\eta )u_\theta\right )\nonumber\\
&&=\sum_{\epsilon  \in \gamma\star\delta}\left (\prod_{\eta \in \gamma\star \delta}(\epsilon-\eta )\right )u_\epsilon +\sum_{\theta  \in \mathcal F\setminus \gamma\star\delta}\left (\prod_{\eta \in \gamma\star \delta}(\theta-\eta )\right )u_\theta  \nonumber \\
&&=\sum_{\theta  \in \mathcal F\setminus \gamma\star\delta}\left (\prod_{\eta \in \gamma\star \delta}(\theta-\eta )\right )u_\theta  \nonumber
\end{eqnarray}
where the last equality holds since as $\epsilon \in \gamma\star \delta$, every product $\prod_{\eta \in \gamma\star \delta}(\epsilon-\eta )$ is zero. Since the elements of $\mathcal F$ are pairwise distinguishable and $u_\theta$ are eigenvectors relative to distinct eigenvalues, it follows that Equation~(\ref{cond2}) holds if and only if 
$$f_\gamma(\ad_a)( v) \cdot f_\delta(\ad_a) (w)\in V_{\gamma \star \delta}.
$$ 
Thus, if $a$ is an $\mathcal F$-axis, then by Proposition~\ref{equivalence} and the fusion law, 
$$
f_\gamma(\ad_a)( v) \cdot f_\delta(\ad_a) (w)=f_\gamma(\gamma)v_\gamma\cdot  f_\delta(\delta)v_\delta\in V_{\gamma \star \delta},$$
and Equation~(\ref{sost}) holds. Conversely, assume Equation~(\ref{cond2}) holds and   let $v\in V_\gamma^a$ and $w\in V_\delta^a$. By Proposition~\ref{equivalence}, 
$$vw=\frac{1}{f_\gamma(\gamma)f_\delta(\delta)}f_\gamma(\ad_a)(v)f_\delta(\ad_a)(w)\in V_{\gamma \star \delta}.$$

For the second assertion, note that $l_wa\in V_1^a$, so, by Proposition~\ref{equivalence},
\begin{eqnarray*}
w_1&=&\pi_1^{\ad_a}(w)\\
&=&\pi_1^{\ad_a}(w-l_wa+l_wa)\\
&=&\pi_1^{\ad_a}(w-l_wa)+\pi_1^{\ad_a}(l_wa)\\
&=&\pi_1^{\ad_a}(w-l_wa)+l_wa\\
&=&\frac{1}{f_1(1)}f_1(\ad_a)(w-l_wa)+l_wa
\end{eqnarray*}
whence $a$ is primitive if and only if Equation~(\ref{cond1}) holds. The last part follows by the definition of $\lambda_a(w)$.
\end{proof}
Equations~(\ref{cond2}) and~(\ref{cond1}) will be used in the construction of the initial object in Section~\ref{3}.

\section{Categories of primitive axial algebras}\label{3}

Let $k$ be a positive integer. Given a fusion law $(\mathcal F_0,  \ast_0)$,   we define the category $({\mathcal O},{\mathcal Mor})$ of primitive $k$ generated axial algebras with fusion law isomorphic to $(\mathcal F_0,  \ast_0)$ and some relevant subcategories of it.

Denote by 
${\mathcal O}$  the class whose objects are the primitive axial algebras 
$$
(R, V, A, (\mathcal F, \ast) )
$$
such that 
\begin{itemize}
\item [{\rm O1}]$A$ is a $k$-tuple of distinct free axes, 
\item [{\rm O2}]$(\mathcal F, \ast)$ is isomorphic to $(\mathcal F_0,  \ast_0)$, 
\item  [{\rm O3}] the elements of $\mathcal F$ are pairwise distinguishable in $R$.
\end{itemize}
To avoid confusion between the elements of $R$ and the elements of $V$, we shall always assume that $R\cap V=\emptyset$. This clearly implies no relevant loss of generality.
\noindent For two elements 
\begin{equation} \label{v1v2}
{\mathcal V_1}:=(R_1, V_1,A_1, (\mathcal F_1, \ast_1) ) \mbox{ and }{\mathcal V}_2:=(R_2, V_2, A_2, (\mathcal F_2, \ast_2 ))
\end{equation}
in ${\mathcal O}$,
let ${\mathcal Mor}({\mathcal V}_1,{\mathcal  V}_2)$ be the set of maps
$$\phi\colon R_1 \cup V_1 \to R_2\cup V_2$$ 
satisfying the following conditions:
\begin {enumerate}
\item  [{\rm H1}] $\phi_{|_{R_1}}$ is a homomorphism of rings with identity between $R_1$ and $R_2$  that induces by restriction an isomorphism of fusion laws between $({\mathcal F}_1,\ast_1)$ and $({\mathcal F}_2,\ast_2)$; 
\item  [{\rm H2}] $\phi_{|_{V_1}}$ is a ring homomorphism $V_1\to V_2$ such that, if $A_1=(c_1, \ldots , c_k)$ and $A_2=(c_1^\prime , \ldots , c_k^\prime)$, then for every $i\in \{1, \ldots , k\}$,  $c_i^\phi = c_i^\prime$ and \item  [{\rm H3}] $(\gamma v)^\phi=\gamma^\phi v^\phi$, for every $\gamma\in R_1$ and $v \in V_1$.
\end{enumerate}

Denote by $\mathcal Mor$ the class of all $\phi\in {\mathcal Mor}({\mathcal V_1},{\mathcal V_2})$ where ${\mathcal V_1}$ and ${\mathcal V_2}$ range in $\mathcal O$. Then clearly  $({\mathcal O},{\mathcal Mor})$ is a category.

If $A$ is the $k$-tuple $(c_1, \ldots , c_k)$, in the sequel we shall write $c\in A$ instead of $c\in \{c_1, \ldots , c_k\}$.

 \begin{remark}
 \label{labello}
Let $\phi \colon {\mathcal V_1}\to {\mathcal V_2}$ be a morphism in  $\mathcal Mor$.
\begin{enumerate}
\item Since $\phi_{|_{R_1}}$ is a ring homomorphism, the induced isomorphism of fusion laws  in (H1) is an algebraic isomorphism.
\item Note that $\phi_{|_{R_1}}$ induces on ${V_2}$  an $R_1$-algebra structure by setting, for every $\delta$ in $R_1$ and $w \in {V_2}$, $\delta w:=\delta^\phi w$. Giving ${V_2}$ such an $R_1$-algebra structure, conditions {\rm H2} and {\rm H3} are  equivalent to saying that $\phi_{|_{ V_1}}$ is an $R_1$-algebra homomorphism (such that  $A_{1}^\phi = A_2$).
\item Finally, if   ${\mathcal  V_1} = {\mathcal V_2}$, and $\phi$ is an automorphism, $\phi_{|_{V_1}}$ is not necessarily an  automorphism of $R_1$-algebras but a {\it semi-automorphism} of $R_1$-algebras (see~\cite{HRS}). Clearly,  $\phi_{|_{V_1}}$ is an automorphism of $R_1$-algebras if and only if $\phi_{|_{R_1}}$ is the identity map. 
\end{enumerate} 
\end{remark}
\medskip

We define some subcategories of $({\mathcal O},{\mathcal Mor})$ in the following way, in order to be able to impose some algebraic relations between the elements of the fusion law. In the following definition, the reader might find it useful to keep in mind that the $x_i$'s correspond to the elements of $\mathcal F_0\setminus \{0,1\}$, the $y_i$'s, $w_i$'s, and $z_{i,j}$'s are needed in order to make the elements corresponding to $0,1, x_1, \ldots , x_n$ pairwise distinguishable, and the $t_h$'s will be needed to guarantee the invertibility of certain elements in some particular cases. Let 
\begin{itemize}
\item[-] $n:=|\mathcal F_0\setminus \{0,1\}|$;
\item[-] $x_i, y_i, w_i, z_{i,j}, t_1, \ldots , t_h$ be $3n+\frac{n(n-1)}{2}+h$ algebraically independent indeterminates over $\Q$, for $i,j\in \{1, \ldots , n\}$, with $i<j$, and $h\in \N$; 
\item[-] $D$ be the polynomial ring 
$${\Z}[x_i, y_i, w_i, z_{i,j}, t_1, \ldots , t_h\:|\:i,j\in \{1, \ldots n\}, i<j, h\in \N];$$
\item[-] $L$ be a proper ideal of $D$ containing the set 
$$ \Sigma:=\{x_iy_{i}-1, \:\: (1-x_i)w_{i}-1, \mbox{ and } \:(x_i-x_j)z_{i,j}-1
 \mbox{ for all } 1\leq i<j\leq n \};$$  
\item [-] $\hat D:=D/L$.  
\end{itemize}
For $d\in D$, we denote the coset $L+d$ by $\hat d$. 

\begin{remark}\label{dimenticato}
Note that, with the above notation, if $F$ is a field and $0,1, d_1, \ldots , d_n$ are pairwise distinguishable elements of $F$, then there is a unique homomorphism 
$$\pi\colon D\to F[t_1, \ldots , t_h]$$ 
such that, for $i,j\in \{1, \ldots , n\}$, with $i<j$, $l\in \{1, \ldots , h\}$; 
$$
x_i^\pi=d_i, y_i^\pi=d_i^{-1}, w_i^\pi=(1-d_i)^{-1}, z_{i,j}^\pi=(d_i-d_j)^{-1}, t_l^\pi=t_l.
$$
As $\Sigma\subseteq \ker \pi\neq D$,  an ideal $L$ as in the above definition always exists.
Since $L$ is proper and, for $1\leq i<j\leq n$,  the elements $\hat x_i$, $1-\hat x_i$, and $\hat x_i-\hat x_j$ are invertible in $\hat D$,  the elements $1, 0, \hat x_1,\ldots, \hat x_n$ are pairwise distinguishable in ${\hat D}$. 
\end{remark}

Define $({\mathcal O}_{L},{\mathcal Mor}_{L})$ as the full subcategory of $({{\mathcal O},{\mathcal Mor}})$  whose objects are the primitive axial algebras $(R, V, A, (\mathcal F, \ast) )\in {\mathcal O}$ that 
satisfy the further condition
\begin{itemize}
\item [{\rm O4}] there exists a ring homomorphism $\zeta\colon \hat D\to R$
inducing a bijection between $\{1, 0,\hat x_1, \ldots , \hat x_n\}$ and $\mathcal F$.
\end{itemize}

\begin{remark}
If $(\Sigma)$ is the ideal of $D$ generated by the set $\Sigma$, then ${\mathcal O}_{(\Sigma)}={\mathcal O}$.
\end{remark}
\begin{proof}
By definition, ${\mathcal O}_{(\Sigma)}\subseteq {\mathcal O}$. Conversely, assume $(R, V, A, (\mathcal F, \ast) )\in {\mathcal O}$ and let $\mathcal F=\{0,1,s_1, \ldots , s_n\}$. Let $\zeta \colon D\to R$ be a ring homomorphism mapping, for every $i, j\in \{1, \ldots n\}$ with $i<j$,  $x_i$ to $s_i$, $y_i$ to $s_i^{-1}$,  $w_i$ to $(1-s_i)^{-1}$, and $z_{i,j}$ to $(s_i-s_j)^{-1}$.  Since the elements of $\mathcal F$ are pairwise distinguishable, $\zeta$ induces a bijection between $\{\hat x_1, \ldots , \hat x_n\}$ and $\mathcal F\setminus \{0,1\}$. Moreover, $(\Sigma)\subseteq \ker \zeta$ and $D/\ker \zeta\cong \zeta(D)\leq R$. So $\zeta(D)$ is a subring of $R$ isomorphic to a quotient of $D/(\Sigma)$, whence $(R, V, A, (\mathcal F, \ast) )\in {\mathcal O}_{(\Sigma)}$.
\end{proof}

\begin{lemma} \label{omolambda}
Let ${\mathcal  V_1},{\mathcal V_2} \in \mathcal O$ be as in Equation~(\ref{v1v2}), let $\phi \in {\mathcal Mor}({\mathcal V_1},{\mathcal V_2})$, and let $a\in A_1$. Then
\begin{enumerate}
\item for every $\mu\in \mathcal{F}_1$ and $v\in (V_1)^a_{\mu}$, we have $v^{\phi}\in (V_2)^{a^\phi}_{\mu^{\phi}}$.
\item for every $v\in V_1$, we have
$
\lambda_{a^\phi}(v^{\phi})=(\lambda_{a}(v))^{\phi}.
$
\end{enumerate}
\end{lemma}
\begin{proof}
Since ${\mathcal V_1},{\mathcal V_2} $ are primitive axial algebras, the result  follows applying $\phi$  to the decomposition of $v$ in Equation~(\ref{justapply}). 
\end{proof}

\section{Universal primitive axial algebras}\label{universal}

In this section we keep the notation of Section~\ref{3}. In particular we assume $L$ to be a proper ideal of $D$ containing the set $\Sigma$ and $\hat D=D/L$. We construct an initial object $\mathcal V_L=(\Ro,\Vo, \Amo, (\overline{\mathcal F}, \overline \star))$ for the category $(\mathcal O_L, {\mathcal Mor}_L) $ in several steps as follows. 

\subsection{Construction of $\aV$.} 

We start by constructing a universal non-associative algebra $\aV$ generated by a $k$-tuple of distinct idempotents over an extension ring $\aR$ of $\hat D$. Namely, let  $\hW$ be the free commutative magma with distinct generators $\ha_1,\ldots , \ha_k$, subject to the condition that $\ha_i$ is idempotent for every $i\in \{1, \ldots , k\}$. Set 
$\A:=(\ha_1,\ldots , \ha_k) $.  
Define $\aR:={\hat D}[\Lambda]$ to be the ring of polynomials with coefficients in $\hat D$ and  indeterminates set  
$$\Lambda:=\{\lambda_{ \ha,  \hw}\:|\: \ha \in \A,  \hw\in \hW, \ha\neq \hw\}
$$ where $\lambda_{ \ha,  \hw}=\lambda_{ \ha',  \hw'}
$ if and only if $ \ha= \ha'$ and $ \hw= \hw'$.
Finally, set $\aV:=\aR[\hW]$ to be the set of all formal linear combinations $\sum_{\hw\in \hW}\gamma_\hw \hw$ of the elements of $\hW$ with coefficients $\gamma_w$ in $\aR$ (with only finitely many of them different from zero) and endow $\aV$ with  the usual structure of a commutative non-associative $\aR$-algebra. Clearly the construction of $\hat V$ implies

\begin{remark}\label{uniV}{ \sc (The universal property of $\hat V$)} 
Let $V$ be a commutative $\hat R$-algebra and let $a_1^\prime, \ldots , a_k^\prime$ be $k$ distinct idempotents generating $V$ as an $\hat R$-algebra. Then, the map $\hat a_i\mapsto a_i^\prime$ extends to a unique $\hat R$-algebra epimorphism  from $\hat V$ onto $V$.
\end{remark}

\subsection{An axial quotient $\aV/\hJ$ of $\aV$}

Let $\So$ be the set $\{1,0, \hat x_1,\ldots, \hat x_n\}$ (defined in Section~3) and let $\star\colon  \So \times \So \to 2^{\So}$ be a map such that $(\So,\star)$ is a fusion law isomorphic to $(\mathcal F_0, \ast_0)$. Since, obviously, a fusion law is isomorphic to $(S_0,\ast_0)$ if and only if it is isomorphic to $(\So,\star)$, we may assume $(\mathcal F_0, \ast_0)= (\So,\star)$. By Lemma~\ref{condition}, in order to obtain from $\hat V$ an axial algebra with fusion law $(\So,\star)$, we need to factor out the ideal $J$ of $\hat V$ generated by the elements 
\begin{equation}\label{auto}
f_1(\ad_\ha)(\hw-\lambda_{\ha,\hw}\ha), 
\end{equation}
for all  $\ha\in \A $ and $\hw\in W$, where  $\lambda_{\ha,\ha}:=1$, and 
\begin{equation}\label{fusion}
\left (\prod_{\eta \in \gamma\star \delta}(\ad_\ha-\eta \:{\rm id}_\aV)\right )\left (f_\gamma(\ad_\ha)( \hv) \cdot f_\delta(\ad_\ha) (\hw)\right )
\end{equation}
for all $ \hv, \hw\in  \aV$, $\gamma, \delta\in \So$, $\ha\in \A$. Here, for $\mu\in \{\gamma, \delta\}$, $f_\mu$ is the polynomial  defined in Equation~(\ref{fmu}) of Section~\ref{2}, with $\Gamma=\hat{\mathcal F}$. 

For a subset $\hat U$ of $\hat V$ denote by $\hat U/\hJ$ the set $\{\hu+\hJ\:|\:\hu\in \hat U\}$. 

\begin{lemma}\label{wp}
 For every $\ha\in \A\setminus \hJ$, $\ha+\hJ$ is an $\hat{\mathcal F}$-axis for the algebra $\hat{V}/\hJ$ and for every $\hw\in \hW$, $\hw+\hJ$ decomposes as follows:
 \begin{equation}\label{decJ}
\hw+\hJ=(\lambda_{\ha,\hw}\ha+\sum_{\mu\in \hat{\mathcal F}\setminus \{1\}}\hw_\mu)+\hJ
\end{equation}
where $ \hw_\mu+\hJ$ is a $\mu$-eigenvector for $\ad_{\ha+\hJ}$.
\end{lemma}
\begin{proof}
 Let $\ha\in \A\setminus \hJ$. Since $\ha$ is an idempotent, $\ha+\hJ$ is an idempotent. Let $f:=\prod_{\lambda \in \mathcal F}(x-\lambda)$. By Equation~(\ref{auto}), for every $\hw\in \hat W$, we have (with the notation of Section~\ref{3})
$$
f_1(\ad_\ha)(\hw)\equiv \lambda_{\ha, \hw}\prod_{\eta\in \mathcal F\setminus \{1\}}(1-\eta)\ha \:\:(\bmod \:\hJ),
$$
whence
$$
f(\ad_\ha)(\hw)= (\ad_\ha-1)f_1(\ad_\ha)(\hw)\equiv \lambda_{\ha, \hw}\prod_{\eta\in \mathcal F\setminus \{1\}}(1-\eta)(\ad_\ha-1)(a)\equiv 0 \:\: (\bmod \:\hJ).
$$
Since $\hat V$ is linearly spanned by $\hat W$, $f(\ad_\ha)\hat V\subseteq \hJ$, whence by Proposition~\ref{equivalence}, $\ha+\hJ$ satisfies condition Ax1. Conditions Ax2, Ax3, and Equation~(\ref{decJ}) follow by Lemma~\ref{condition}.
\end{proof}

\subsection{Adjusting  $\aR$ and $\aV/\hJ$}
Assume 
$$
\sum_{\hw\in \hW} \gamma_\hw \hw\equiv 0 \:\:(\bmod \:\hJ).
$$   
Then, for every $\ha \in \A \setminus \hJ$, by Equation~(\ref{decJ}), we get 
\begin{equation}\label{wow}
0\equiv \left ((\sum_{\hw\in \hW} \gamma_\hw \lambda_{\ha,\hw})\ha+\sum_{\mu\in \hat{\mathcal F}\setminus \{1\}}\sum_{\hw\in \hW} \gamma_\hw \hw_\mu\right )\:\:(\bmod \:\hJ)
\end{equation}
where $\hw_\mu+\hJ$ is a $\mu$-eigenvector for $\ad_{\ha+\hJ}$. Since by Lemma~\ref{wp}, $\ha+\hJ$ is an $\hat{\mathcal F}$-axis in $\hat V/\hJ$, and the elements  of $\hat{\mathcal F}$ are pairwise distinguishable, by Lemma~\ref{equivalence} $\hat V/\hJ$ decomposes as the direct sum of $\ad_{\ha+\hJ}$-eigenspaces. Thus, by Equation~(\ref{wow}), it follows 
$$
(\sum_{\hw\in \hW} \gamma_\hw \lambda_{\ha,\hw})\ha\equiv 0 \:\:(\bmod\:\hJ).
$$
Since we want the generating axes of the target algebra $\Vo$ to be free, we need to replace $\aR$ by $\Ro:=\aR/I_0$, where 
$I_0$ is the ideal of $\aR$ generated by the set
\begin{equation}\label{I0}
\left \{\sum_{\hw\in \hW}\gamma_\hw \lambda_{\ha,\hw}\:|\:\ha\in \A,\: \gamma_\hw\in \hat R \mbox{ such that }\sum_{\hw\in \hW}\gamma_\hw \hw\in  \hJ \right \}.
\end{equation}

Set $\hJ_0:=\hJ+I_0\aV$, for every $\hv\in \hat V$, let $\bv:=\hv+\hJ_0$, and, for a subset $\hat U$ of $\hat V$, let ${\bf U}:=\{\bu\:|\:\hu\in \hat U\}$. Thus $\Vo=\aV/\hJ_0$. We now prove that $\hJ_0\neq \hat V$, whence $I_0\neq \hat R$. 

\begin{lemma}\label{contained}
Let ${\mathcal V}:=(R, V, A, (\mathcal F, \ast) )$ be an element of  $\mathcal O_L$, let $\phi\colon \aR\cup\aV  \to R\cup V$ be a map  that satisfies conditions (H1) (with respect to $(\So,\star)$ and $(\mathcal F, \ast)$), (H2), and (H3). Then $I_0\subseteq \ker \phi_{|_{\aR}}$,  $\hJ_0\subseteq  \ker \phi_{|_{\aV}}$. 
\end{lemma}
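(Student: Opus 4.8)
The plan is to verify that $\phi$ annihilates a generating set of each of the two ideals and then to assemble the two conclusions. Since $\phi_{|\aR}$ is a ring homomorphism (H1) and $\phi_{|\aV}$ a non-associative ring homomorphism (H2), both $\ker\phi_{|\aR}$ and $\ker\phi_{|\aV}$ are ideals (of $\aR$ and of $\aV$ respectively); so it suffices to show that $\phi$ kills the generators of $J$ in~(\ref{auto}) and~(\ref{fusion}), the generators of $I_0$ in~(\ref{I0}), and the products $I_0\aV$. Throughout I use the following transport principle: because $\phi_{|\aR}$ is a ring homomorphism whose restriction is the (algebraic) isomorphism of fusion laws sending each $\hat x_i$ to its counterpart in $\mathcal S$, for every $c\in\A$ and every $u\in\aV$,
$$\phi\big((f_\mu(\ad_c))(u)\big)=(f_\mu(\ad_a))(\phi(u)),\qquad \phi\Big(\prod_{\eta\in\gamma\star\delta}(\ad_c-\eta\,\mathrm{id})(u)\Big)=\prod_{\eta\in\gamma\star\delta}(\ad_a-\eta\,\mathrm{id})(\phi(u)),$$
where $a:=\phi(c)$ is an axis of $V$ by (H2) and the polynomials on the right are the ones formed from the spectrum of $V$. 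In short, $\phi$ intertwines the operators built from $\ad_c$ with those built from $\ad_a$.

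The fusion generators are the easy case. Applying the transport principle to~(\ref{fusion}) shows that $\phi$ sends it to
$$\left(\prod_{\eta\in\gamma\star\delta}(\ad_a-\eta\,\mathrm{id})\right)\left((f_\gamma(\ad_a))(\phi(v))\cdot(f_\delta(\ad_a))(\phi(w))\right),$$
which is exactly the left-hand side of the second identity of Lemma~\ref{condition}, now read inside $V$ at the axis $a$ and the elements $\phi(v),\phi(w)$. Since $V$ is a primitive axial algebra whose spectrum is pairwise distinguishable and whose axes are free, that identity holds in $V$ and the expression vanishes. Hence every generator~(\ref{fusion}) lies in $\ker\phi_{|\aV}$.

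The primitivity generators~(\ref{auto}) are where the real work lies. By the transport principle $\phi$ sends $(f_1(\ad_c))(w-\lambda_{c,w}c)$ to $(f_1(\ad_a))(\phi(w)-\phi(\lambda_{c,w})a)$. Decomposing $\phi(w)$ by Proposition~\ref{lambdafunction} as $\lambda_a(\phi(w))a+\sum_{\mu\in\So\setminus\{1\}}(\phi(w))_\mu$, and using that $f_1$ vanishes on $\So\setminus\{1\}$ while $f_1(1)=\prod_{\lambda\neq 1}(1-\lambda)$ is a unit, this image collapses to $f_1(1)\big(\lambda_a(\phi(w))-\phi(\lambda_{c,w})\big)a$. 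As $a$ is free, it is zero if and only if
$$\phi(\lambda_{c,w})=\lambda_a(\phi(w)).$$
Establishing this identity is the crux and, I expect, the main obstacle: it is the free-cover analogue of Lemma~\ref{omolambda}, asserting that $\phi$ respects the $\lambda$-coordinates. The subtlety is that $\aV$ is the \emph{free} pre-image (the decomposition of $w$ along the $\ad_c$-eigenspaces holds only modulo $J$, not in $\aV$ itself), so unlike in Lemma~\ref{omolambda} it cannot simply be read off an eigenspace decomposition in the source. I expect it to be pinned down either by the way the comparison map $\phi$ is constructed (its value on the indeterminate $\lambda_{c,w}$ being set equal to $\lambda_a(\phi(w))$), or by an induction on the length of the word $w\in W$ anchored at $\lambda_{c,c}=1$, which is consistent since $\lambda_a(a)=1$. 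Granting it, every generator~(\ref{auto}) lies in $\ker\phi_{|\aV}$, and therefore $J\subseteq\ker\phi_{|\aV}$.

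It remains to treat $I_0$ and to assemble. A generator of $I_0$ in~(\ref{I0}) is $\sum_{w\in W}\gamma_w\lambda_{c,w}$ subject to $\sum_{w\in W}\gamma_w w\in J$. Applying $\phi$, and using the identity $\phi(\lambda_{c,w})=\lambda_a(\phi(w))$ together with the $R$-linearity of $\lambda_a$ (Proposition~\ref{lambdafunction}) and the inclusion $J\subseteq\ker\phi_{|\aV}$ just proved, we get
$$\phi\Big(\sum_{w\in W}\gamma_w\lambda_{c,w}\Big)=\sum_{w\in W}\phi(\gamma_w)\,\lambda_a(\phi(w))=\lambda_a\Big(\sum_{w\in W}\phi(\gamma_w)\phi(w)\Big)=\lambda_a\Big(\phi\big(\sum_{w\in W}\gamma_w w\big)\Big)=\lambda_a(0)=0.$$
Hence $I_0\subseteq\ker\phi_{|\aR}$, which is the first assertion. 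Finally, for $r\in I_0$ and $v\in\aV$ condition (H3) gives $\phi(rv)=\phi(r)\phi(v)=0$, so $I_0\aV\subseteq\ker\phi_{|\aV}$; combining this with $J\subseteq\ker\phi_{|\aV}$ yields $J_0=J+I_0\aV\subseteq\ker\phi_{|\aV}$, the second assertion.
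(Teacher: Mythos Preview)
Your overall structure mirrors the paper's exactly: kill the generators of $J$, deduce $I_0\subseteq\ker\phi_{|\aR}$ via the $\lambda$-functional, then combine with (H3) for $J_0$. The difference is that where the paper writes the single line ``By Lemma~\ref{condition}, $J\subseteq\ker\phi_{|\aV}$'', you unpack it and discover that for the primitivity generators~(\ref{auto}) one needs the identity $\phi(\lambda_{c,w})=\lambda_{c^\phi}(w^\phi)$. You are right that this is the crux, and right that it does \emph{not} follow from (H1)--(H3): the first identity in Lemma~\ref{condition} carries the canonical scalar $\lambda_a(w)$, whereas the $\phi$-image of~(\ref{auto}) carries the a~priori arbitrary value $\phi(\lambda_{c,w})$. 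The paper's proof glosses over exactly the point you flag.

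The gap is genuine, and the statement as written is in fact false without that extra hypothesis. Since the $\lambda_{c,w}$ are algebraically independent indeterminates in $\aR=\hat D[\Lambda]$, the ring map $\phi_{|\aR}$ may assign them any values in $R$ while still satisfying (H1). For instance, send all of $\A$ to a single axis $a$ in a one-dimensional target $V=Ra$ (so every $w\in W$ goes to $a$), and send some $\lambda_{c,w}$ (with $w\neq c$) to $0$; then the image of the corresponding generator~(\ref{auto}) is $f_1(1)\,(1-0)\,a\neq0$. The remedy is precisely what you guessed: the identity $\phi(\lambda_{c,w})=\lambda_{c^\phi}(w^\phi)$ must be taken as an additional hypothesis on $\phi$, which is exactly how the comparison map is constructed in the only place the lemma is really used, namely Equation~(\ref{lambdax}) in the proof of Theorem~\ref{free}. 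With that hypothesis in force, both the paper's argument and yours go through verbatim.
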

\begin{proof}
 By Remark~\ref{labello}(2), $V$ is an $\aR$-algebra and, since by Lemma~\ref{condition},  $\hJ\subseteq \ker \phi_{|_{\aV}}$,  
$\phi$ induces an $\aR$-algebra homomorphism 
$$\begin{array}{clcl}
\phi_{\aV/\hJ}:&\aV/\hJ&\to &V,
\end{array}$$
$$\:\:\:\:\:\:\:\:\:\:\:\:\:\begin{array}{clcl}
& \hv+\hJ&\mapsto &\hv^{\phi}.
\end{array}
$$
By condition (H2), $\A\cap \hJ=\emptyset$ and so by Lemma~\ref{wp},  for every $\ha\in \A$ and $\hw\in \hW$, we can write   
$$
\hw+\hJ=(\lambda_{\ha,\hw}\ha+\sum_{\mu \in \So\setminus \{1\}} \hw_\mu)+  \hJ,
$$
where, for every $\mu\in \So\setminus \{1\}$, $\hw_\mu+\hJ$ is a $\mu$-eigenvector for $\ad_{\ha+\hJ}$. 
 Condition (H3) implies that, for every $\mu\in \mathcal F$, $\ha\in \A$, $\phi_{\aV/\hJ}$ maps $\mu$-eigenvectors for $\ad_{\ha+\hJ}$ to  $\mu^{\phi}$-eigenvectors for $\ad_{\ha^{\phi}}$. Thus, if $\hv=\sum_{\hw\in \hW}\gamma_\hw \hw\in \hJ$, then $\hv^{\phi}=0$, whence
\begin{eqnarray*}
\lefteqn{0=\lambda_{\ha^{\phi}}(\hv^{\phi})}\\
&=&\lambda_{\ha^{\phi}}\left (\sum_{\hw\in \hW}\gamma_\hw^{\phi}\hw^{\phi}\right )\\
&=& \sum_{\hw\in \hW}\gamma_\hw^{\phi} \lambda_{\ha^{\phi}}(\hw^{\phi})\\
&=&  \sum_{\hw\in \hW}\gamma_\hw^{\phi} \lambda_{\ha^{\phi}}((\lambda_{\ha,\hw}\ha+\sum_{\mu \in \So\setminus \{1\}} \hw_\mu)^{\phi})\\
&=&   \sum_{\hw\in \hW}\gamma_\hw^{\phi} \lambda_{\ha^{\phi}}((\lambda_{\ha,\hw})^{\phi}\ha^{\phi})+\sum_{\hw\in \hW}\gamma_\hw^{\phi} \lambda_{\ha^{\phi}}(\sum_{\mu \in \So\setminus \{1\}} \hw_\mu^{\phi}))\\
&=&   \sum_{\hw\in \hW}\gamma_\hw^{\phi}(\lambda_{\ha,\hw})^{\phi}=(\sum_{\hw\in \hW}\gamma_\hw\lambda_{\ha,\hw})^{\phi}.
\end{eqnarray*}
Thus $I_0\subseteq \ker \phi_{|_{\aR}}$. Finally, by condition (H3), $(I_0\aV)^{\phi}=I_0^{\phi}V^{\phi}=0V_1=0$, whence $\hJ_0\subseteq \ker \phi_{|_{\aV}}$.
\end{proof}

\begin{lemma} \label{example}
We have $\hJ_0\neq \aV$, $I_0\neq \hat R$, and the map $\ha_i\mapsto \ba_i$ is a bijection from $\A$ onto $\Amo $. 
\end{lemma}
\begin{proof}
Let $\hat M$ be a maximal ideal of $\aR$ and  let $\F:=\aR/\hat M$. 
Since, by Remark~\ref{dimenticato}, the elements of $\hat{\mathcal F}$ are pairwise distinguishable, by Remark~\ref{mor}, $(\hat{\mathcal F}_\F, \star_{\F} )$ is a fusion law isomorphic to $(\hat{\mathcal F}, \star )$. 

Now let $\F^k$ be the direct sum of $k$ copies of $\F$ as an $\F$-algebra and $\mathcal B:=(e_1,\ldots ,e_k)$ be the canonical basis of $\F^k$. Then, for every $i\in \{1, \ldots , k\}$, $e_i$ is an idempotent and 
$$
\F^k=\F e_i\oplus \ker \ad_{e_i}.
$$
Therefore, by Lemma~\ref{any}, the $4$-tuple $$( \F, \F^k, \mathcal B, (\hat{\mathcal F}_\F, \star_{\F} ))$$ is a primitive (associative) axial algebra. Moreover, the axes $e_1, \ldots , e_k$ are free, since $\F$ is a field. Whence 
$$
( \F, \F^k, \mathcal B, (\hat{\mathcal F}_\F, \star_{\F} ))\in \mathcal O_L.
$$
By Remark~\ref{uniV},  the map sending each $\ha_i\in \A$ to $e_i\in \mathcal B$  extends to a unique $\hat R$-algebra epimorphism $\phi_{\aV}\colon \aV \to \F^k$.  Let 
$$
\phi\colon \aR \cup \aV \to \F \cup \F^k
$$ 
be the map whose restrictions to $\aR$ and $\aV$ are the canonical projection on $\F$ and $\phi_{\aV}$, respectively. Then $\phi$   satisfies the conditions H1, H2, and H3. Therefore, by Lemma~\ref{contained}, $\hJ_0\subseteq \ker \phi_{|_{\aV}}\neq \aV$ and $I_0\subseteq \hat M\neq \hat R$. Finally, since $k=|\{\ha_1, \ldots , \ha_k\}|\geq |\{\ba_1, \ldots , \ba_k\}|\geq |\{e_1, \ldots , e_k\}|=k$, we get the result.
\end{proof}
\medskip

For every $r\in \aR$, set $\bar r:=r+I_0$ and, for a subset $S$ of $\hat R$, let ${\overline S}:=\{\bar s\:|\:s\in S\}$. By Lemma~\ref{example}, $I_0\neq \hat R$ and thus, by Remark~\ref{mor}, $(\hat{\mathcal F}_{\Ro}, \star_{\Ro})$ is a fusion law isomorphic to $(\hat{\mathcal F}, \star)$.   We 
 denote $(\hat{\mathcal F}_{\Ro}, \star_{\Ro})$ by $(\overline{\mathcal{F}}, \overline{\star})$. Since, by hypothesis, $(\hat{\mathcal F}, \star)$ is isomorphic to $(\mathcal F_0, \ast_0)$, we have

\begin{cor}
\label{rem2}
 $(\overline{\mathcal F}, \overline \star)$ is a fusion law isomorphic to $(\mathcal F_0, \ast_0)$ and the elements of $\overline{\mathcal F}$ are pairwise distinguishable. 
\end{cor}

\begin{lemma}\label{wpbis}
 $(\aR, \Vo, \Amo, (\So, \star))$ is a  primitive axial algebra generated by $\Amo$ and, for $\ha \in \A$, $\hw \in \hW$, $\bw$ decomposes as 
 \begin{equation}\label{eqwpbis}
\bw=\lambda_{\ha,\hw}\ba+\sum_{\mu\in \hat{\mathcal F}\setminus \{1\}}\bw_\mu
\end{equation}
where $ \bw_\mu$ is a $\mu$-eigenvector for $\ad_{\ba}$.
\end{lemma}
\begin{proof}
By Lemma~\ref{example},  $\Vo\neq \{0\}$. Since $\hat V$ is generated as algebra by $\hat a_1, \ldots , \hat a_k$, $\Vo$ is generated as algebra by $\ba_1, \ldots , \ba_k$ and the result follows by Lemma~\ref{wp}, since $\hJ_0$ is an ($\aR$-invariant) ideal containing $\hJ$.
\end{proof}

\subsection{The initial object}
Set 
$$
\mathcal V_L:=(\Ro,\Vo, \Amo, (\overline{\mathcal F}, \overline \star)),
$$
where $\Amo$ is endowed with the order induced by the order of $\A$ by the map $\ha_i\mapsto \ba_i$.

\begin{lemma}\label{free1}
$\mathcal V_L\in \mathcal O_L$. Moreover, for every $\ba \in \Amo$, if $\lambda_\ba:\Vo\to \Ro$ is the $\Ro$-linear map defined in Lemma~\ref{lambdafunction}, then for every $\hat w\in \hat W$, we have
\begin{equation}\label{lambdone}
\lambda_\ba(\bw)=\lambda_{\hat a, \hat w}+I_0.
\end{equation}
\end{lemma}
\begin{proof}
Since, by Lemma~\ref{wpbis}, $(\aR, \Vo, \Amo, (\hat {\mathcal F},\star))$ is a  primitive axial algebra generated by the ordered set of idempotents $\Amo$, and by definition $I_0\subseteq Ann_\aR(\Vo)$, we get that $(\Ro,\Vo, \Amo, (\overline {\mathcal F}, \overline \star))$ is a primitive axial algebra generated by $\Amo$. Moreover, by Corollary~\ref{rem2}, the elements of $\overline{\mathcal F}$ are pairwise distinguishable in $\Ro$. Thus the result follows once we show that the axes in $\Amo$ are free. Suppose that there exist $r\in \aR$ and $\ha \in \A$ such that $\bar r\in Ann_\Ro(\ba)$. Then $r\ha\in \hJ_0$ and hence there exist $i_0\in I_0$ and $\sum_{\hw\in \hW}\gamma_\hw \hw\in  \aV$ such that
$$
r\ha-i_0\left(\sum_{\hw\in \hW}\gamma_\hw \hw\right )\in \hJ.
$$
Then, by the definition of $I_0$,
$$
r\lambda_{\ha,\ha}-i_0\left(\sum_{\hw\in \hW}\gamma_\hw \lambda_{\ha,\hw}\right )\in I_0.
$$
Since $\lambda_{\ha,\ha}=1$ and $I_0$ is an ideal, it follows that $r \in I_0$, thus $\bar r=0_\Ro$ and $\ba$ is free.

The last assertion follows by Lemma~\ref{lambdafunction} and Lemma~\ref{wpbis}, taking the image of Equation~(\ref{eqwpbis}) under the canonical projection $\hat R\to \Ro$.
\end{proof}

\begin{theorem}\label{free}
$\mathcal V_L$ is an initial object in the category $({\mathcal O}_L,{\mathcal Mor}_L)$.
\end{theorem}

\begin{proof}
Assume  ${\mathcal  V}:=(R, V, A, (\mathcal F, \ast))$ is an object in $\mathcal O_L$. By condition O4 of the definition of $\mathcal O_L$, there is a unitary ring homomorphism  $\zeta:\hat D\to R$  inducing a bijection between $\{\hat x_1,  \ldots , \hat x_n\}$ and $\mathcal F\setminus \{0,1\}$. Let $A:=(b_1, \ldots , b_k)$ and let $t$ be the map $\ha_i\mapsto b_i$ from $\A$ to $A$ and 
$
\bar t$ be the map $\ba_i\mapsto b_i$ from $\Amo$ to $\A$. Since $\hW$ is the free commutative magma generated by $\A$, there is a unique magma homomorphism 
$$\chi:\hW\to V,$$ extending the map $t$. Since the elements of $\Lambda$ are algebraically independent over $\hat D$, there is a unique homomorphism of $\hat D$-algebras 
 $$\hat \psi :\aR\to R$$ extending $\zeta$ and 
  such that,  for $i\in \{1,\ldots , k\}$ and  $\hw \in \hW\setminus \{\ha_i\}$,  
  \begin{equation}\label{lambdax}
 \lambda_{\ha_i, \hw}^{\hat \psi}=\lambda_{b_i}(\hw^\chi),
 \end{equation}
  where $\lambda_{b_i}$ is the function defined in Lemma~\ref{lambdafunction}.  Define 
$$
\begin{array}{rcccc}
\hat \chi &:& \aV& \to &V\\
& & \sum_{\hw\in \hW}\gamma_\hw \hw & \mapsto & \sum_{\hw\in \hW}\gamma_\hw^{\hat \psi}\: \hw^\chi
\end{array}.
$$
Then $\hat \chi$ is a ring homomorphism extending $t$. Moreover,  for every $\gamma\in \aR$ and $\hv=\sum_{\hw\in \hW}\gamma_\hw \hw\in \aV$, we have 
\begin{eqnarray*}
(\gamma  \hv)^{\hat \chi}&=&(\gamma \sum_{\hw\in \hW} \gamma_\hw \hw)^{\hat\chi}=( \sum_{\hw\in \hW} \gamma \gamma_\hw \hw)^{\hat\chi}\\
&=&\sum_{\hw\in \hW}\gamma^{\hat \psi} \gamma_\hw^{\hat \psi} \hw^\chi=\gamma^{\hat \psi}(\sum_{\hw\in \hW} \gamma_\hw^{\hat \psi} \hw^\chi)=\gamma^{\hat \psi} v^{\hat \chi}.\\
\end{eqnarray*}
Since, for every $\ha_i \in \A$, $\hv\in \aV$, and $\gamma \in \hat {\mathcal F}$,
$$
 [ (\ad_{\ha_i}-\gamma\: {\rm id}_\aV) (\hv)]^{\hat \chi} =(\ad_{b_i}-\gamma^{\hat \psi}\: {\rm id}_{V}) (\hv^{\hat \chi}),
$$
by Lemma~\ref{condition}, it follows that $\hJ_0$ is contained in $\ker \hat \chi$. Hence $\hat \chi$ induces a ring homomorphism 
$$\begin{array}{rccc}
\phi_V : &\Vo& \to &V\\
& \hv+\hJ_0 & \mapsto & \hv^{\hat \chi}
\end{array}
$$
 extending $\overline t$. Moreover,  for every $\gamma\in \aR$ and ${\bv}\in \Vo$,  we have
$$
(\gamma \bv)^{\phi_{V}}=(\gamma \hv)^{\hat\chi}=\gamma^{\hat \psi} \hv^{\hat \chi}=\gamma^{\hat \psi} {\bv}^{\phi_{V}}. 
$$
From this equality, as in the proof of Lemma~\ref{contained}, we get that $I_0\subseteq \ker \hat \psi$. Then  $\hat \psi$ induces a well defined homomorphism of $\hat D$-algebras $\phi_{R}:\Ro\to R$, $r+I_0\mapsto r^{\hat\psi}$. It follows that  the map $\phi\colon \Ro\cup \Vo \to R\cup V$ such that $\phi_{|\Ro}=\phi_R$ and $\phi_{|\Vo}=\phi_V$ belongs to ${\mathcal Mor}({\mathcal V}_L, \mathcal V)$ . 

Since $\aR=\hat D[\Lambda]$, $\phi_R$ is completely determined by its values on the elements $\lambda_{\ha,\hw}+I_0$, with $\ha\in \A$ and $\hw\in \hW\setminus \{\ha\}$. Further, by Equation~(\ref{lambdax}), for every $\ha_i\in \A$, $\hw\in \hW\setminus \{\ha_i\}$, 
\begin{equation}\label{bendef}
(\lambda_{\ha_i,\hw}+I_0)^{\phi_R}=\lambda_{\ha_i,\hw}^{\hat\psi}=\lambda_{b_i}(\hw^\chi)=\lambda_{b_i}((\hw+\hJ_0)^{\phi_{V}}),
\end{equation}
whence  $\phi_R$ is completely determined by the images $(\hw+\hJ_0)^{\phi_{V}}$, with $\hw \in \hW$.  Since $\phi_{V}$ is a ring homomorphism extending $t$, such images are uniquely determined by $t$, whence the uniqueness of  $\phi$. 
\end{proof}

\begin{cor}\label{baruf}
Every  permutation $\sigma $ of the set $\{\ba_1,\dots , \ba_k\}$ extends to a unique $\Ro$-algebra semi-automorphism $\sigma^\ast$ of $\Vo$. 
\end{cor}
\begin{proof}
Let $\A^\sigma:=(\ha_{1^\sigma}, \ldots , \ha_{k^\sigma})$ and let ${\mathcal V}_L^\sigma:=(\Ro^\sigma,\Vo^\sigma, \Amo^\sigma, (\overline{\mathcal F}, \overline \star))$ be the axial algebra obtained by repeating the construction of $\mathcal V_L$ with $\A^\sigma$ in place of $\A$, with the convention that, for every object $X$ appearing in that construction, we denote by $X^\sigma$ the corresponding object in the construction of  ${\mathcal V}_L^\sigma$. In particular, 
 $\hW^\sigma=\hW$, hence $\hat R^\sigma=\hat R$ and $\hat V^\sigma=\hat V$. A direct check shows that $\hJ^\sigma=\hJ$ and $I_0^\sigma=I_0$, so 
$\Ro^\sigma=\Ro$ and $\Vo^\sigma=\Vo$.  By Theorem~\ref{free}, ${\mathcal V}_L^\sigma$ is also an initial object in $(\mathcal O_L, \mathcal Mor_L)$.  Hence $\sigma$ extends to an isomorphism $\sigma^\ast\in \mathcal Mor({\mathcal V}_L, {\mathcal V}_L^\sigma)$, so by Remark~\ref{labello}, ${\sigma^\ast}_{|_\Vo}$ is a  semi-automorphism of $\Vo$ with $(\lambda_{\ha,\hw})^{\sigma^\ast}=\lambda_{\ha^{\sigma^\ast}, \hw^{\sigma^\ast}}$, for every $\ha\in \A$ and $\hw\in \hW\setminus \{\ha\}$.
\end{proof}


By the above construction, $\Ro$ and $\Vo$ are $\hat D$-modules. Let ${\mathcal V}:=(R, V, A, (\mathcal F, \ast)) \in {\mathcal O}_{L}$. 
Since ${\mathcal V}\in {\mathcal O}_{L}$, by O4 there is a ring homomorphism $\zeta: \hat D\to R$ inducing a bijection between $\{\hat x_1, \ldots , \hat x_n\}$ and $\mathcal F\setminus \{0,1\}$. Thus $\zeta$ induces on $R$ a $\hat D$-module  structure and so the tensor products $\Ro\otimes_{\hat D} R$ and $\Vo\otimes_{\Ro}(\Ro\otimes_{\hat D} R) $ are well defined.

\begin{cor}\label{tensor}
Let ${\mathcal V}:=(R, V, A, (\mathcal F, \ast)) \in {\mathcal O}_{L}$ with $A=(b_1, \ldots , b_n)$ and set $\tilde{R}:=\Ro\otimes_{\hat D}R$.  Then
\begin{enumerate}
\item  there exists a unique surjective ring homomorphism $\varphi_R: \tilde R \to R$ such that, for every $i,j\in \{1, \ldots , k\}$ and $r\in R$, we have
\begin{equation}\label{ciserve1}
((\lm_{\hat a_i, \hat a_j}+I_0)\otimes r)^{\varphi_R}=r\lm_{b_i}(b_j).
\end{equation}
\item   there exists a unique surjective $R$-algebra homomorphism $\varphi_V: \Vo\otimes_{\Ro}\tilde R \to V$ such that, for every $i \in \{1, \ldots , k\}$ and $\tilde r\in \tilde R$, we have
\begin{equation}\label{ciserve2}
(a_i\otimes \tilde r)^{\varphi_V}=\tilde{r}^{\varphi_R}b_i.
\end{equation} 
In particular 
\begin{equation}\label{ciserve3}
(\Vo\otimes_{\Ro}\tilde R )\ker(\varphi_R)= \Vo\otimes_{\Ro} \ker(\varphi_R)\leq \ker (\varphi_V).
\end{equation}
\end{enumerate}
\end{cor}

\begin{proof}
By Theorem~\ref{free} there exists a unique $\phi\in \mathcal Mor(\mathcal V_L, \mathcal V)$. Then $\phi_{|\Ro}:\Ro\to R$ is a ring homomorphism and $\phi_{|\Vo}:\Vo\to V$ is a ring homomorphism mapping $a_i$ to $b_i$ for every $i\in \{1, \ldots , k\}$. Let $\varphi_R: {\tilde R} \to R$ be the map  defined by $\bar r\otimes r\mapsto r\bar r^{\phi_{|\Ro}}$, for each $\bar r \in \Ro$, $r\in R$. Then $\varphi_R$  is the unique surjective ring homomorphism satisfying Equation~(\ref{ciserve1}), proving $(1)$. 

Note that $\tilde R$ has a natural structure of an $\Ro$-algebra, whence $\Vo\otimes_{\Ro}\tilde R$ is an $\tilde R$-algebra and the homomorphism $r\mapsto 1\otimes r$ of $R$ into $\tilde R$ induces a structure of $R$-algebra on $\Vo\otimes_{\Ro}\tilde R$. As above, the map $\varphi_V: \Vo\otimes_{\Ro}\tilde R \to V$ defined by $\bv\otimes \tilde r\mapsto \tilde r^{\varphi_R} \bv^{\phi_{|\Vo}}$, for each $\bv \in \Vo$, $\tilde r\in \tilde R$, is the unique surjective algebra homomorphism  satisfying Equation~(\ref{ciserve2}). Now $(2)$ follows, since $\ker (\varphi_R)$ is an ideal of $\tilde R$. 
\end{proof}


\bigskip
\noindent {\bf Question 1.}
{\it Can we define a variety of axial algebras corresponding to a given fusion law $(\mathcal F,\ast)$?}

\noindent {\bf Question 2.}
{\it  Is it true that any ideal $I$ of $\aR$ containing $I_0$ defines an axial algebra with free axes?}

\section{$2$-generated primitive axial algebras of Monster type $(\alpha, \beta)$}\label{table}


In this section we keep the notation of Section~\ref{universal}. Assume $k=2$ and let $(\mathcal F_0, \ast_0)$ be the Monster fusion law $\Mab$ in Table~\ref{Ising}.
\begin{table}
$$ 
\begin{array}{|c||c|c|c|c|}
\hline
\star & 1 & 0 & \al & \bt\\
\hline
\hline
1 & 1 & & \al & \bt\\
\hline
0 & & 0 & \al & \bt\\
\hline
\al & \al & \al & 1,0 & \bt\\
\hline
\bt & \bt & \bt & \bt & 1,0,\al\\
\hline
\end{array}
$$
\caption{Fusion law $\Mab$} \label{Ising}
\end{table}
Recall that this means that the adjoint action of every axis has spectrum 
$\{1,0,\al,\bt\}$ and, for any two eigenvectors $u$ and $v$ with eigenvalues 
$\gm,\dl\in\{1,0,\al,\bt\}$, the product $uv$ is a sum of eigenvectors for 
the eigenvalues contained in $\gm\star\dl$. 

Let ${\mathcal V}=(V, R, A, (\mathcal F, \ast))\in {\mathcal O}_{L}$, where $A=(a_0, a_1)$.  
 Set ${\mathcal F}_+:=\{1,0, \alpha\}$ and ${\mathcal F}_-:=\{ \beta\}$.
Recall (see e.g.~\cite[Section~3]{KMS}) that, for every $a\in A$, the partition $\{{\mathcal F}_+, {\mathcal F}_-\}$ of ${\overline{\mathcal F}}$ induces a $\Z_2$-grading on ${\mathcal F}$ which, on turn, induces, for every $a\in A$, a $\Z_2$-grading $\{V_+^a, V_-^a\}$ on $V$ where $V_+^a:=V_1^a+V_0^a+V_{\alpha}^a$ and $V_-^a=V_{ \beta}^a$. It follows that the unique linear map $\tau_a$ from $ V$ to $V$ such that 
   $$
   v\mapsto 
\begin{cases}
v & \mbox{ if } v\in  V_+^a\\
-v &  \mbox{ if }  v\in  V_-^a
\end{cases} 
$$  
 is  an involutory automorphism of $V$ (see~\cite[Proposition 3.4]{HRS}). The map  $\tau_a$ is called the {\it Miyamoto involution} associated to the axis $a$. 
  
\begin{lemma}\cite[Lemma~2.1]{FMS2}\label{invariant}
Let $a$ and $b$ be axes of $V$. Then $\tau_a$ and $\tau_{b}$ fix $ab- \beta(a+b)$.
\end{lemma}
 
Let $\rho:=\tau_{a_0}\tau_{a_1}$. For $i\in \Z$, set $$a_{2i}:=a_0^{\rho^i} \:\:\mbox{  and } \:a_{2i+1}:=a_1^{\rho^i}.
$$
and 
$$\mathcal A:=\{a_i\:|\: i\in \mathbb Z\}.
$$
 Since $\rho$ is an automorphism of $V$, for every $j\in \Z$,  $a_j$ is an axis. We denote simply by $\tau_j$ the Miyamoto involution $\tau_{a_j}$ associated to $a_j$. Note that, for every $i,j\in \Z$,
\begin{equation}\label{obviousrelation}
a_i^{\tau_j}=a_{2j-i}
\end{equation}

\begin{lemma}\cite[Lemma~2.2]{FMS2}\label{invariances}
For every $m\in \N$,  and $ i,j\in \Z$ such that $i\equiv j \: \bmod m$, we have  
$$
a_ia_{i+m}-\beta (a_i+a_{i+m})=a_ja_{j+m}-\beta (a_j+a_{j+m}).
$$
\end{lemma}

Finally, for $m\in \N$ and $ i\in \{0, \ldots , n-1\}$, set
\begin{equation}\label{s}
s_{i,m}:=a_ia_{i+m}-\beta (a_i+a_{i+m}). 
\end{equation}
and 
$$
\mathcal S:=\{s_{i,n}\:|\: n\in \N, i\in  \{0, \ldots , n-1\}\}.$$
Note that the products of two elements in $\mathcal A$ lie in the linear span of $\mathcal A\cup \mathcal S$.

By Lemma~\ref{lambdafunction}, for every $a\in \mathcal A$ there is a linear map $\lambda_a\colon V\to R$ such that every $v\in V$ can be written as in Equation~(\ref{justapply}). 
For $i\in \N$,  define
\begin{equation}\label{lambdas}
    \lambda_i:=\lambda_{a_0}(a_i),
    \end{equation} 
    and let 
 \begin{equation}\label{ai}
 a_i=\lambda_i  a_0+u_i+v_i +w_i
 \end{equation}
 be the decomposition of $a_i$ into $\ad_{a_0}$-eigenvectors, where $u_i$ is a $0$-eigenvector, $v_i$ is an $\alpha$-eigenvector and $w_i$ is a $\beta$-eigenvector.    

We focus now on the  initial object ${\mathcal V_L}=( \Ro, \Vo, (\ba_0, \ba_1), (\overline{\mathcal F}, \overline{\ast}))$ for the category $ {\mathcal O}_{L}$, where $L$ is an ideal of $D$ containing $2t_1-1$, so that $\hat 2$ is invertible in $\hat D$. 
With an abuse of notation, for every isomorphism of fusion laws $\phi\colon (\mathcal F_0, \ast_0)\to (\mathcal F, \ast)$ we still denote by $\alpha$ and $\beta$ the images $\alpha^\phi$ and $\beta^\phi$, respectively. In paticular, $\overline x_1=\al$ and $\overline x_2=\bt$. 
According to the definitions given above, we have the set  
$$\{\ba_i\:|\: i\in \mathbb Z\}$$
of axes and the set 
$$\{\bs_{i,n}\:|\: n\in \N, i\in \{0, \ldots , n-1\}\}.$$
We will derive some linear relations between these elements and their products. It will turn out that, except for the case $\al\in \{2\bt, 4\bt\}$, $\Vo$ is linearly spanned by $$B:=\{ \ba_{-2}, \ba_{-1}, \ba_0, \ba_1, \ba_2, \bs_{0,1}, \bs_{0,2}, \bs_{1,2}\},$$ the above mentioned  relations giving the structure constants of the algebra. All the computations of this section have been accomplished with the use of Singular~\cite{Singular} in~\cite[genericsakuma.s]{code}. In Section~\ref{al=4bt} we will find similar relations in the case $\al=4\bt$.
 
Let $f$ be the isomorphism in $\mathcal Mor(\mathcal V_L, \mathcal V_L^\sigma)$  induced by the permutation $\sigma$ that swaps $\ba_0$ with $\ba_1$, as  defined in Corollary~\ref{baruf}. By Remark~\ref{labello}, $f_{|_\Vo}$ is a semi-automorphism of the $\Ro$-algebra $\Vo$ with associated ring automorphism $f_{|_\Ro}$.  For every $j\in \Z$, we have
\begin{equation}\label{obviousrelationf}
\ba_j^f=\ba_{-j+1}
\end{equation}

Further, by Lemma~\ref{omolambda}, for every $i\in \N$, we have 
\begin{equation}\label{nonsisa}
\lambda_{\ba_0}(\ba_{-i})=\lambda_i, \:\:
\lambda_{\ba_1}(\ba_0) =\lambda_1^f, \mbox{ and }\:\:
 \lambda_{\ba_1}(\ba_{-1})= \lambda_2^f.
 \end{equation}

 \begin{lemma}\label{primo}
Let $i\in \N$, then  
\begin{eqnarray*}
\ba_0\bs_{0,i}&=&( \alpha- \beta) \bs_{0,i}+[(1- \alpha)\lambda_i +\beta( \alpha- \beta- 1)]\ba_0+\frac{1}{2}\beta( \alpha - \beta)(\ba_i+\ba_{-i}).
\end{eqnarray*}
\begin{eqnarray*}
\ba_1\bs_{0,1}&=&( \alpha- \beta) \bs_{0,1}+[(1- \alpha)\lambda_1^f +\beta( \alpha- \beta- 1)]\ba_1+\frac{1}{2}\beta( \alpha - \beta)(\ba_0+\ba_{2}).
\end{eqnarray*}
and 
\begin{eqnarray*}
\ba_{-1}\bs_{0,1}&=&( \alpha- \beta) \bs_{0,1}+[(1- \alpha)\lambda_1^f +\beta( \alpha- \beta- 1)]\ba_{-1}+\frac{1}{2}\beta( \alpha - \beta)(\ba_0+\ba_{-2}).
\end{eqnarray*}
Moreover, for $i\in \N\setminus \{1\}$, 
\begin{eqnarray*}
\ba_1\bs_{1,i}&=&( \alpha- \beta) \bs_{1,i}+[(1- \alpha)\lambda_i^f +\beta( \alpha- \beta- 1)]\ba_1+\frac{1}{2}\beta( \alpha - \beta)(\ba_{-i+1}+\ba_{i+1}).
\end{eqnarray*}
\end{lemma}
\begin{proof}
The first formula is in~\cite[Lemma~3.1]{R}, the second is obtained by applying $f$  to the first one, the third is obtained by applying $\tau_0$ to the second one and the fourth is obtained by applying $f$ to the first one.
\end{proof}

 \begin{lemma}
\label{lambdaa}
With the above notation,   
\begin{enumerate}
\item $\bu_i=\frac{1}{\alpha} ((\lambda_i  - \beta - \alpha \lambda_i ) \ba_0 + \frac{1}{2}(\alpha - \beta) (\ba_i + \ba_{-i})-\bs_{0,i} )$;
\item $\bv_i=\frac{1}{\alpha} ((\bt-\lambda_i )\ba_0+\frac{\bt}{2}(\ba_i+\ba_{-i})+\bs_{0,i})$;
\item $\bw_i=\frac{1}{2}(\ba_i-\ba_{-i})$.
\end{enumerate}
\end{lemma}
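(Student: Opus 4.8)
\emph{The plan is to} extract two independent linear relations among the eigencomponents $u_i$, $v_i$, $w_i$ of the decomposition~(\ref{ai}) and then solve for them. Throughout I abbreviate $\lambda:=\lambda_{a_0}(a_i)$, and I recall that in $\Ro$ both $2$ and $\alpha$ are units, since $L$ contains $2t_1-1$ and $x_1y_1-1$.

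First I would bring in the Miyamoto involution $\tau_0:=\tau_{a_0}$. Because $\rho=\tau_0\tau_1$ and each $\tau_j$ is an involution, the dihedral relation $\tau_0\rho\tau_0=\tau_1\tau_0=\rho^{-1}$ holds; combined with the definitions $a_{2i}=a_0^{\rho^i}$ and $a_{2i+1}=a_1^{\rho^i}$ this gives, for $i\in\{1,2,3\}$, the identity $a_i^{\tau_0}=a_{-i}$ (e.g. $a_1^{\tau_0}=a_1^{\tau_1\tau_0}=a_1^{\rho^{-1}}=a_{-1}$, and similarly for $i=2,3$). Now $\tau_0$ fixes every vector of $V_+^{a_0}=V_1^{a_0}+V_0^{a_0}+V_\alpha^{a_0}$ and inverts every vector of $V_\beta^{a_0}$, so applying $\tau_0$ to~(\ref{ai}) yields
$$a_{-i}=a_i^{\tau_0}=\lambda a_0+u_i+v_i-w_i.$$
Subtracting and adding this to~(\ref{ai}) gives at once
$$w_i=\tfrac{1}{2}(a_i-a_{-i}),\qquad u_i+v_i=\tfrac{1}{2}(a_i+a_{-i})-\lambda a_0,$$
which already proves part (3) and supplies the first of my two relations.

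Next I would multiply~(\ref{ai}) by $a_0$. Using $a_0^2=a_0$ and the eigenvalue equations $a_0u_i=0$, $a_0v_i=\alpha v_i$, $a_0w_i=\beta w_i$, I obtain $a_0a_i=\lambda a_0+\alpha v_i+\beta w_i$. Substituting this together with~(\ref{ai}) into the defining equation $s_{0,i}=a_0a_i-\beta(a_0+a_i)$ and collecting terms makes the $w_i$-contributions cancel, producing the second relation
$$s_{0,i}=(\lambda-\beta-\beta\lambda)a_0-\beta u_i+(\alpha-\beta)v_i.$$
Finally I would solve this $2\times 2$ system: eliminating $u_i$ by the relation $u_i+v_i=\tfrac12(a_i+a_{-i})-\lambda a_0$ and dividing by the unit $\alpha$ yields the formula in (2) for $v_i$, and back-substitution then yields the formula in (1) for $u_i$. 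The computation is entirely mechanical once the two boxed relations are in hand; the only genuine point requiring care is the identity $a_i^{\tau_0}=a_{-i}$ together with the fact that $\tau_0$ acts as $-1$ \emph{precisely} on the $\beta$-eigenspace, since it is this single application of $\tau_0$ that simultaneously isolates $w_i$ and collapses $u_i+v_i$ onto the symmetric combination $a_i+a_{-i}$.
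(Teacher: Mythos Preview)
Your proof is correct and follows essentially the same approach as the paper: the paper likewise obtains (3) from the action of $\tau_0$ (using $a_i^{\tau_0}=a_{-i}$), derives (2) by rearranging $a_0a_i=\lambda a_0+\alpha v_i+\beta w_i$ via the definition of $s_{0,i}$, and then gets (1) by back-substitution into Equation~(\ref{ai}). Your version is slightly more explicit in framing the last step as solving a $2\times 2$ linear system and in justifying the identity $a_i^{\tau_0}=a_{-i}$, but the underlying argument is the same.
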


\begin{proof}
(3) follows from the definitions of $\tau_0$ and $\ba_i$, 
(2) is just a rearranging of $\ba_0\ba_i=\lambda_i  \ba_0+\alpha \bv_i +\beta \bw_i$ using Equation~(\ref{s}), and (1) follows rearranging Equation~(\ref{ai}). \end{proof}

Set $T_0:=\langle \tau_0, \tau_1\rangle$ and $T:=\langle \tau_0, f\rangle$.

\begin{lemma}\label{action}
The groups $T_0$ and $T$ are dihedral groups, $T_0$ is a normal subgroup of $T$ such that $|T:T_0|\leq 2$. 
For every $n\in \N$, the set $\{\bs_{0, n}, \ldots , \bs_{n-1,n}\}$ is invariant under the action of $T$. In particular, if $K_n$ is the kernel of this action, we have 
\begin{enumerate}
\item $K_1=T$;
\item $K_2=T_0$, in particular $\bs_{0,2}^f=\bs_{1,2}$; 
\item $T/K_3$ induces the full permutation group on the set $\{\bs_{0, 3}, \bs_{1, 3}, \bs_{2,3}\}$ with point stabilisers generated by $\tau_0K_3$, $\tau_1K_3$ and $fK_3$, respectively. In particular $\bs_{0,3}^f=\bs_{1,3}$ and $\bs_{0,3}^{\tau_1}=\bs_{2,3}$. 
\end{enumerate}  
\end{lemma}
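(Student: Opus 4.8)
The plan is to reduce everything to the combinatorics of the action of the three involutions $\tau_0$, $\tau_1$ and $f$ on the index $j$ of the axes $a_j$, and then to the induced affine action on $\Z/n\Z$.

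First I would settle the group-theoretic assertions. Each of $\tau_0,\tau_1$ is an involution, and $f$ is an involution because, by Corollary~\ref{baruf}, $\sigma\mapsto f_\sigma$ is a homomorphism from the symmetric group on $\Amo$ and $f$ is the image of a transposition. Since $T_0=\langle\tau_0,\tau_1\rangle$ and $T=\langle\tau_0,f\rangle$ are each generated by two involutions, both are dihedral. Using the transformation law $\tau_{a^g}=g^{-1}\tau_a g$ for an automorphism $g$ (see~\cite[Proposition~3.4]{HRS}) together with $a_0^f=a_1$ and $a_1^f=a_0$, I get $\tau_0^f=\tau_1$ and $\tau_1^f=\tau_0$; hence $f$ normalises $T_0$, and since $\tau_0$ trivially does so, $T_0\trianglelefteq T$. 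As $T/T_0$ is generated by the image of the involution $f$, we get $|T:T_0|\le 2$.

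Next I would compute the action on indices. From $\rho=\tau_0\tau_1$ and $\tau_0^f=\tau_1$ one obtains $\rho^{\tau_0}=\rho^{-1}$ and $\rho^f=\rho^{-1}$; combined with $a_j^\rho=a_{j+2}$ (immediate from $a_{2i}=a_0^{\rho^i}$, $a_{2i+1}=a_1^{\rho^i}$), the fixed point $a_0^{\tau_0}=a_0$ (as $a_0\in C_V(\tau_0)$), the value $a_0^f=a_1$, and the relation $\tau_1=\tau_0\rho$, a short induction on $i$ separating even and odd indices yields the three reflection formulas
\begin{equation*}
a_j^{\tau_0}=a_{-j},\qquad a_j^{\tau_1}=a_{2-j},\qquad a_j^f=a_{1-j}\qquad(j\in\Z).
\end{equation*}
Since each generator is an algebra automorphism, $s_{r,n}^g=a_r^g a_{r+n}^g-\beta(a_r^g+a_{r+n}^g)$, and by commutativity this is again of the form $s_{\ast,n}$ on the image pair; reducing the base index modulo $n$ via Lemma~\ref{invariances} gives
\begin{equation*}
s_{r,n}^{\tau_0}=s_{-r,n},\qquad s_{r,n}^{\tau_1}=s_{2-r,n},\qquad s_{r,n}^{f}=s_{1-r,n},
\end{equation*}
with indices read in $\Z/n\Z$. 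This proves the invariance of $\{s_{0,n},\dots,s_{n-1,n}\}$ under $T$ and shows that the action factors through the affine maps $r\mapsto -r$, $r\mapsto 2-r$, $r\mapsto 1-r$.

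Finally I would read off the three cases. For $n=1$ the set is a singleton, so $K_1=T$. For $n=2$ all three maps reduce to $r\mapsto r$ except $f\colon r\mapsto 1-r$, which swaps $s_{0,2}$ and $s_{1,2}$; hence $\tau_0,\tau_1\in K_2$ while $f\notin K_2$, and since $T_0\le K_2\lneq T$ with $|T:T_0|\le 2$ we conclude $K_2=T_0$ and $s_{0,2}^f=s_{1,2}$. For $n=3$ the maps become the transpositions $(1\,2)$, $(0\,2)$ and $(0\,1)$ of $\{0,1,2\}$, which generate the full symmetric group; thus $T/K_3$ induces the full permutation group on $\{s_{0,3},s_{1,3},s_{2,3}\}$, with the stabilisers of $s_{0,3}$, $s_{1,3}$, $s_{2,3}$ generated respectively by $\tau_0K_3$, $\tau_1K_3$, $fK_3$, and in particular $s_{0,3}^f=s_{1,3}$ and $s_{0,3}^{\tau_1}=s_{2,3}$. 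The main obstacle I anticipate is the bookkeeping in the middle step: deriving the index-reflection formulas correctly requires care with the right-action convention, with the even/odd split built into the definition of $a_j$, and with the precise shape of the Miyamoto transformation law, so that the three reflections come out centred at $0$, $1$ and $\tfrac12$. Once these are in hand, the transport to $s_{r,n}$ via Lemma~\ref{invariances} and the case analysis for $n\le 3$ are routine.
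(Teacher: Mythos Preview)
Your proposal is correct and follows precisely the route the paper has in mind; the paper's own proof is the single sentence ``This follows immediately from the definitions,'' and you have simply spelled out those definitions and the resulting index calculus. The only point you leave implicit is that the equalities $K_2=T_0$ and $T/K_3\cong S_3$ (rather than something smaller) presuppose that $s_{0,2}\neq s_{1,2}$ and that $s_{0,3},s_{1,3},s_{2,3}$ are pairwise distinct in the universal object $\overline{\mathcal V}$; this is true (it can be seen in suitable quotients) but is not argued, and the paper does not argue it either.
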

\begin{proof}
$T_0$ and $T$ are dihedral groups since they are both generated by pairs of involutions ($\tau_0$ and $\tau_1$, resp. $\tau_0$ and $f$). Since $f$ swaps by conjugation $\tau_0$ and $\tau_1$, $T_0$ is a normal subgroup of $T$ of index at most $2$. By the definition of $\bs_{i,n}$ and Equations~(\ref{obviousrelation}) and~(\ref{obviousrelationf}), it follows that the set $\{\bs_{0, n}, \ldots , \bs_{n-1,n}\}$ is invariant under the action of $T$. The remaining assertions follow immediately.
\end{proof}

For $i,j\in \N$, let $\bu_i, \bv_i, \bw_i$ be the $\ad_{\ba_0}$-eigenvectors defined as in Equation~(\ref{ai}) with respect to the axis $\ba_i$ and set
 \begin{equation}\label{pq}
 \bp_{ij}:=\bu_i\bu_j + \bu_i\bv_j\:\: \mbox{ and } \:\:\bq_{ij}:=\bu_i \bv_j+\frac{1}{\alpha^2}\bs_{0,i}\bs_{0,j}.
 \end{equation}

\begin{remark}\label{remarkproducts}
Note that, by Lemma~\ref{lambdaa}, it follows that the vectors $\bp_{ij}$  and $\bq_{ij}$ are linear combinations of products between two elements of $\mathcal A$ or between an element of $\mathcal A$ by an element of $\mathcal S$. The next lemma gives a formula which can be effectively used to express the products between two elements of $\mathcal S$ as linear combinations of products $\bf x\bf y$, with $(\bf x, \bf y)\in \mathcal A\times (\mathcal A\cup \mathcal S)$.
\end{remark}
 
  \begin{lemma}
  \label{psi}
 For $i,j\in \N$ we have
 \begin{equation}\label{ss}
 \bs_{0,i} \bs_{0,j}=-\alpha (\ba_0\bp_{ij}- \alpha \bq_{ij}).
 \end{equation}
\end{lemma}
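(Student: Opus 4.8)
The plan is to reduce the claimed equation to a single identity that is a direct consequence of the fusion law, taking advantage of the fact that the correction term $Q_{ij}$ has been set up precisely to absorb the unwanted summand. First I would record the eigenspace memberships coming from the decomposition in Equation~(\ref{ai}): by construction $u_i,u_j\in V_0^{a_0}$ and $v_j\in V_\alpha^{a_0}$, all eigenvectors for the \emph{same} adjoint map $\ad_{a_0}$. Applying the fusion rule Ax2 with $a=a_0$ and reading the relevant cells of Table~\ref{Ising} --- namely $0\star 0=\{0\}$ and $0\star\alpha=\{\alpha\}$ --- gives $u_iu_j\in V_0^{a_0}$ and $u_iv_j\in V_\alpha^{a_0}$. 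Hence $a_0(u_iu_j)=0$ and $a_0(u_iv_j)=\alpha\, u_iv_j$.

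Combining these two facts yields the key identity
$$
a_0P_{ij}=a_0(u_iu_j)+a_0(u_iv_j)=\alpha\, u_iv_j.
$$
The lemma is then immediate: substituting the definition $Q_{ij}=u_iv_j-\frac{1}{\alpha^2}s_{0,i}s_{0,j}$ into $\alpha(a_0P_{ij}-\alpha Q_{ij})$ and using the identity above, the two contributions $\pm\alpha^2 u_iv_j$ cancel and exactly $s_{0,i}s_{0,j}$ survives, as required. Note that $\alpha$ is invertible (being distinguishable from $0$), so $Q_{ij}$ and this manipulation make sense.

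I do not expect a genuine obstacle: all the content is carried by the fusion law, and once the eigenspace memberships are in place the algebra is a two-line cancellation. The only points needing care are (a) that $u_i,u_j,v_j$ really are eigenvectors of the same $\ad_{a_0}$, so that Ax2 legitimately applies to their products, and (b) that one reads the correct entries $0\star 0=\{0\}$ and $0\star\alpha=\{\alpha\}$ of the fusion table, since the whole computation hinges on these and not on any product involving $\beta$. In particular neither $w_i$, $w_j$ nor the explicit formulas of Lemma~\ref{lambdaa} enter here; those become relevant only for the subsequent, more delicate analysis of $P_{ij}$ and $Q_{ij}$ themselves.
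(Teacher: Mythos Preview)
Your proposal is correct and follows essentially the same route as the paper: the paper's proof also reduces everything to the identity $a_0P_{ij}=\alpha\,u_iv_j$, obtained from the fusion rules $0\star 0=\{0\}$ and $0\star\alpha=\{\alpha\}$, and then notes that the result follows by the definition of $Q_{ij}$. You have simply spelled out the cancellation and the invertibility of $\alpha$ in a little more detail.
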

 \begin{proof}
Since $\bu_i$ is a $0$-eigenvector and $\bv_j$ is  an $\alpha$-eigenvector for $\ad_{\ba_0}$, 
 by the fusion law, we have $\ba_0\bp_{ij}=\alpha(\bu_i\cdot \bv_j)$ and the result follows.  
\end{proof}

\begin{lemma}
\label{a0s2f}
In the algebra $\Vo$, the following equalities hold:
\begin{eqnarray*}
\lefteqn{( \alpha-2   \beta)\ba_0\bs_{1,2}=\beta^2( \alpha- \beta)(\ba_{-2}+\ba_2)} \\
&+&\left [-2   \alpha   \beta   \lmu+2\beta(1- \alpha) \lmf+\frac{\beta}{2}(4   \alpha^2  -2   \alpha   \beta- \alpha   +4   \beta^2-2   \beta)\right ](\ba_1+\ba_{-1})\\
&+& \frac{1}{(\alpha-\beta)}\left [(6   \alpha^2-8   \alpha   \beta-2   \alpha+4   \beta)   \lmu^2+(2   \alpha^2-2   \alpha)   \lmu   \lmf \right .\\
&&+2(-2   \alpha^2-2   \alpha   \beta+ \alpha)(\alpha-\beta) \lmu-4\beta(   \alpha -1)(\alpha-\beta)   \lmf - \alpha   \beta(\alpha-\beta)   \lmd \\
&& \left .+(4   \alpha^2   \beta-2   \alpha   \beta+2   \beta^3)(\alpha-\beta)\right ]\ba_0\\
&+& \left [-4\alpha \lmu-4(\alpha-1)\lmf+(4\alpha^2-2   \alpha   \beta- \alpha+4   \beta^2-2   \beta)\right ]\bs_{0,1}\\
&+& 2\beta ( \alpha -  \beta)\bs_{0,2}
\end{eqnarray*}
and 
\begin{eqnarray*}
\lefteqn{4(\alpha-2\beta)\bs_{0,1} \cdot \bs_{0,1}=\beta(\alpha-\beta)^2(\alpha-4\beta)(\ba_{-2}+\ba_2)}\\
&+ &\left[4\alpha\beta(\alpha-\beta)\lmu+2(-\alpha^3+5\alpha^2\beta+\alpha^2-4\alpha\beta^2-5\alpha\beta+4\beta^2)\lmf \right .\\
& &\left .\beta(-10\alpha^2\beta-\alpha^2+14\alpha\beta^2+7\alpha\beta-4\beta^3-6\beta^2)\right ] (\ba_{-1}+\ba_1)\\
& +&2\left [2(-3\alpha^2+4 \alpha \beta+\alpha-2 \beta) \lmu^2+2\alpha(1-\alpha) \lmu \lmf \right .\\
& &\left . +2(\alpha^3+4 \alpha^2 \beta-6 \alpha \beta^2-3 \alpha \beta+4 \beta^2)\lmu+2\alpha\beta(\alpha -1) \lmf+\alpha\beta(\alpha- \beta)\lmd \right .\\
& &\left .+\beta(-\alpha^3 -8 \alpha^2 \beta+13 \alpha \beta^2+4 \alpha \beta-4 \beta^3-4 \beta^2)\right ] \ba_0\\
&+ &4\left [2   \alpha(\alpha-  \beta)   \lmu+\alpha( \alpha- 1)  \lmf+(-6   \alpha^2   \beta+10   \alpha   \beta^2+ \alpha   \beta-4   \beta^3)\right ]\bs_{0,1}\\
& -& 2\alpha\beta(\alpha -    \beta)\bs_{0,2}+2\beta( \alpha - \beta)(\alpha-2\beta) \bs_{1,2}.
\end{eqnarray*}

\end{lemma} 
\begin{proof}
 By the fusion law, 
 \begin{equation}
 \label{left2}
 \ba_0(\bu_1\cdot \bu_1 - \bv_1\cdot \bv_1 + \lambda_{\ba_0}(\bv_1\cdot \bv_1)\ba_0)=0.
 \end{equation} 
Substituting  in the left side of~(\ref{left2}) the values for $\bu_1$ and $\bv_1$ given in Lemma~\ref{lambdaa} we get the first equality. The expression for $(\al-2\bt)\ba_0\bs_{1,2}$ allows us to write explicitly  the vector  
$$
(\al-2\bt)(\ba_0\bp_{11}-\al \bq_{11})
$$ as a linear combination of $\ba_{-2}, \ba_{-1}, \ba_{0}, \ba_1, \ba_2, \bs_{0,1}, \bs_{0,2}, \bs_{1,2}$. The second equality now follows from  Equation~(\ref{ss}) in Lemma~\ref{psi}.
 \end{proof}

\begin{lemma}\label{a3} In the algebra $\Vo$, the following equalities hold:
\begin{enumerate} 
\item  $\beta(\alpha-\beta)^2(\alpha-4\beta)(\ba_3-\ba_{-2})=\bc$, 
\item  $\beta(\alpha-\beta)^2(\alpha-4\beta)(\ba_4-\ba_{-1})=-\bc^{\tau_1},$
\end{enumerate} where
\begin{eqnarray*}
\bc&=&
(\alpha-\beta)\left[ 4\alpha\beta\lmu-2(\alpha-1)(\alpha-4\beta)\lmf+\beta(-\alpha^2-5\alpha\beta-\alpha+6\beta)\right ]\ba_{-1}\\
&+& \left [4(-3\alpha^2+4\alpha\beta+\alpha-2\beta)\lmu^2-4\alpha(\alpha-1)\lmu\lmf \right .\\
&&
+(6\alpha^3+6\alpha^2\beta-2\alpha^2-16\alpha\beta^2-2\alpha\beta+8\beta^2)\lmu
+4\alpha\beta(\alpha-1)\lmf \\
&& \left . +2\alpha\beta(\alpha -\beta)
\lmd+\beta(\alpha-\beta)(-2\alpha^2-8\alpha\beta+\alpha+4\beta^2+2\beta)
\right ]  \ba_0\\
&+&\left [ 4\alpha(\alpha-1)\lmu\lmf+4(13\alpha^2-4\alpha\beta-\alpha+2\beta){\lmu^f}^2-4\alpha\beta(\beta-1)\lmu\right .\\
&&+(-6\alpha^3-6\alpha^2\beta+2\alpha^2+16\alpha\beta^2+2\alpha\beta-8\beta^2)\lmf-2\alpha\beta(\alpha-\beta)\lmdf\\
& & \left .+\beta(\alpha-\beta)(2\alpha^2+8\alpha\beta-\alpha-4\beta^2-2\beta)\right ]\ba_1\\
&+&(\alpha-\beta)\left [2(\alpha-1)(\alpha-4\beta)\lmu-4\alpha\beta\lmf+\beta(\alpha^2+5\alpha\beta+\alpha-6\beta)\right ]\ba_2\\
&+& 4\alpha(\alpha-2\beta+1)(\lmu-\lmf)\bs_{0,1}\\
&-& 4\beta (\alpha-\beta)^2(\bs_{0,2}-\bs_{1,2}). 
\end{eqnarray*}
\end{lemma}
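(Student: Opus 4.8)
\emph{The plan is to prove (1) by a single symmetry argument and then obtain (2) for free.} First I would record how the relevant automorphisms act on indices: since $f$ inverts $\rho=\tau_0\tau_1$ and swaps $a_0$ and $a_1$, it sends $a_i\mapsto a_{1-i}$, while $\tau_1$ fixes $a_1$ and sends $a_i\mapsto a_{2-i}$. In particular $f$ interchanges $a_{-2}\leftrightarrow a_3$, $a_2\leftrightarrow a_{-1}$ and $a_0\leftrightarrow a_1$, whereas $\tau_1$ sends $a_3\mapsto a_{-1}$ and $a_{-2}\mapsto a_4$. Since $\tau_1$ is the identity on $R$ (so it fixes the scalar $\beta(\alpha-\beta)^2(\alpha-4\beta)$ and, by Lemma~\ref{omolambda}, each of $\lmu,\lmf,\lmd,\lmdf$), applying $\tau_1$ to the identity in (1) turns its left-hand side into $\beta(\alpha-\beta)^2(\alpha-4\beta)(a_{-1}-a_4)$ and its right-hand side into $c^{\tau_1}$; rearranging gives exactly (2). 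Thus it suffices to establish (1).

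For (1) the key observation is that $s_{0,1}$ is fixed by $f$: indeed $s_{0,1}^f=a_1a_0-\beta(a_1+a_0)=s_{0,1}$, in agreement with $K_1=T$ in Lemma~\ref{action}. Hence the left-hand side $4(\alpha-2\beta)\,s_{0,1}\cdot s_{0,1}$ of the second identity of Lemma~\ref{a0s2f} is $f$-invariant, so applying the automorphism $f$ to that whole identity produces a \emph{second} expression for the same element. In carrying this out I would use Lemma~\ref{action}(2) for the $s$-elements ($f$ fixes $s_{0,1}$ and swaps $s_{0,2}\leftrightarrow s_{1,2}$) and Lemma~\ref{omolambda} for the scalars ($f$ swaps $\lmu\leftrightarrow\lmf$ and $\lmd\leftrightarrow\lmdf$). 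Subtracting the original identity from its $f$-image cancels the left-hand side and leaves a linear relation among the nine elements $a_3,a_{-2},a_{-1},a_0,a_1,a_2,s_{0,1},s_{0,2},s_{1,2}$ — precisely the vectors occurring in (1). Here the term $\beta(\alpha-\beta)^2(\alpha-4\beta)(a_{-2}+a_2)$ and its $f$-image $\beta(\alpha-\beta)^2(\alpha-4\beta)(a_3+a_{-1})$ are what make $a_3$ and $a_{-2}$ appear with coefficient $\mp\beta(\alpha-\beta)^2(\alpha-4\beta)$; solving the relation for $a_3-a_{-2}$ then yields (1), with $c$ equal to the combination of the $f$-differences of the coefficients of Lemma~\ref{a0s2f}.

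The one laborious point is the final bookkeeping: one must check that the $f$-difference of the (long) coefficient polynomials in the second identity of Lemma~\ref{a0s2f} collapses to exactly the displayed $c$. This is a finite, purely mechanical polynomial verification, requiring no new fusion-law input beyond Lemma~\ref{a0s2f} — the only ingredients are the explicit $f$-actions on the axes, on the $s$-elements, and on the $\lambda$-scalars. I expect this coefficient matching (in particular, keeping track of which of $s_{0,2},s_{1,2}$ each term maps to under $f$, and of the $\lmu\leftrightarrow\lmf$, $\lmd\leftrightarrow\lmdf$ exchanges) to be the main source of potential slips, but it presents no genuine conceptual obstacle.
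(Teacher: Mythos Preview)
Your proposal is correct and follows exactly the paper's own argument: the paper derives (1) from the $f$-invariance of $s_{0,1}\cdot s_{0,1}$ by subtracting the second identity of Lemma~\ref{a0s2f} from its $f$-image, and then obtains (2) by applying $\tau_1$ to (1). Your write-up simply makes explicit the index actions of $f$ and $\tau_1$ and the bookkeeping that the paper leaves implicit.
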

\begin{proof}
Since $\bs_{0,1}\bs_{0,1}$ is invariant under $f$, we have $4(\al-2\bt)[\bs_{0,1}\bs_{0,1}- (\bs_{0,1}\bs_{0,1})^f]=0$. Then,  (1) follows from the expression for $4(\alpha-2\beta)\bs_{0,1} \bs_{0,1}$ given in Lemma~\ref{a0s2f}. By applying $\tau_1$ to (1), we get (2).
\end{proof}

From the formulae in Lemmas~\ref{a0s2f} and~\ref{a3}, it is clear that we have different pictures according whether $\al-2\bt$ and $\al-4\bt$ are invertible in $\Ro$ or not. 
Since we are most concerned with algebras over a field, later we will also assume that $\al-2\bt$ and $\al-4\bt$ are either invertible or zero. To this aim we choose appropriately the ideal $L$ of $D$ introduced in Section~\ref{3}, after Remark~\ref{labello}.
It follows that we have  to deal separately with the three cases of Rehren's Trichotomy: 
\begin{enumerate}
\item The {\it regular case}: here we assume that $L$ contains the elements 
$$\: \:2t_1-1, \:\:(x_1-2x_2)t_2-1,\:\:(x_1-4x_2)t_3-1,\: \mbox{ and } \:\:t_h \mbox{ for } 4\leq h\in \N,
$$ so that $\al-2\bt$ and $\al-4\bt$ are invertible in $\Ro$. In this case we denote $\mathcal O_L$ by $\mathcal O_r$. 
\item The {\it $\alpha=2\bt$ case}: here  $L$ contains the elements 
$$2t_1-1, \:\:x_1-2x_2 \: \mbox{ and } \:\:t_h \mbox{ for }  2\leq h\in \N,
$$
so that $\al=2\bt$.  In this case we denote  $\mathcal O_L$ by $\mathcal O_{2\bt}$. 
\item The {\it $\alpha=4\bt$ case}: here $L$ contains the elements 
$$2t_1-1, \:\:x_1-4x_2 \: \mbox{ and } \:\:t_h \mbox{ for }  2\leq h\in \N,
$$
so that $\al=4\bt$.  In this case we denote  $\mathcal O_L$ by $\mathcal O_{4\bt}$. 
\end{enumerate}

In~\cite{FMS2} the case $\al=2\bt$ is considered in details: in particular it is shown that every $2$-generated primitive axial algebra  in  $\mathcal O_{2\bt}$ is at most $8$ dimensional and this bound is the best possible.
The following result, which can be compared to Theorem~3.7 in~\cite{R}, is a  consequence of the {\it resurrection principle}~\cite[Lemma~1.7]{IPSS10}. 
\begin{proposition}\label{span}
Let $\mathcal V_r=(\Ro,\Vo, (\ba_0, \ba_1), (\overline{\mathcal F}, \overline \star))$  be the initial object in the category $\mathcal O_r$. Then
$\Vo$ is linearly spanned by the set $\{ \ba_{-2}, \ba_{-1}, \ba_0, \ba_1, \ba_2, \bs_{0,1}, \bs_{0,2}, \bs_{1,2}\}
$.
\end{proposition}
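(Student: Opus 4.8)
The plan is to let $U$ be the $\Ro$-submodule of $\Vo$ spanned by the eight listed elements and to prove that $U$ is a subalgebra; since $U$ contains the generators $a_0,a_1$ and $\Vo$ is generated by $\Amo$, this forces $U=\Vo$. First I would show that $a_i\in U$ for every $i\in\Z$. In the generic category $\mathcal O_g$ the elements $\beta$, $\alpha-\beta$ and $\alpha-4\beta$ are units of $\Ro$ (already in $\hat D$, via the relations $x_2y_2-1$, $(x_1-x_2)z_{1,2}-1$ and $(x_1-4x_2)t_3-1$ lying in $L$), so $\beta(\alpha-\beta)^2(\alpha-4\beta)$ is invertible. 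Hence Lemma~\ref{a3}(1) expresses $a_3$ as an $\Ro$-combination of $a_{-2},\dots,a_2$ and $s_{0,1},s_{0,2},s_{1,2}$, i.e. $a_3\in U$, and Lemma~\ref{a3}(2) then gives $a_4\in U$. By Lemma~\ref{action} the rotation $\rho$ fixes each of $s_{0,1},s_{0,2},s_{1,2}$, so applying powers of $\rho$ (which shift indices by $2$) to the two relations of Lemma~\ref{a3} yields, for every window of six consecutive indices, a linear relation expressing the largest $a_m$ through the preceding five together with the fixed $s$'s. A straightforward upward induction, and the symmetric downward one obtained by applying $\tau_0$, then give $a_i\in U$ for all $i$.

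Next I would record that $U$ is invariant under $T=\langle\tau_0,f\rangle$: now that every $a_i$ lies in $U$ and $T$ permutes the axes, the $a$-part is automatic, while Lemma~\ref{action} shows $T$ permutes $\{s_{0,1},s_{0,2},s_{1,2}\}$ among themselves. The key closure step is $a_0U\subseteq U$. For the five products $a_0a_i$ with $-2\le i\le 2$ this is immediate from \eqref{s} and Lemma~\ref{invariances}; for $a_0s_{0,1}$ and $a_0s_{0,2}$ it is Lemma~\ref{primo}; and for $a_0s_{1,2}$ it is the first identity of Lemma~\ref{a0s2f}, dividing by the unit $\alpha-2\beta$ (a unit in $\Ro$ since $(x_1-2x_2)t_2-1\in L$). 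Writing any $a_i$ as $a_0^{g}$ with $g\in T$ and using $T$-invariance of $U$, the identity $a_iu=(a_0\,u^{g^{-1}})^{g}$ upgrades this to $a_iU\subseteq U$ for every $i$; in particular every product $a_ia_j$, and hence every $s_{r,n}$, lies in $U$.

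It remains to prove $U\cdot U\subseteq U$. Since $U$ is spanned by the $a_i$ together with $s_{0,1},s_{0,2},s_{1,2}$, and multiplication by any axis already preserves $U$, the only products still to be checked are the six $s_p\cdot s_q$ with $p,q\in\{(0,1),(0,2),(1,2)\}$; using the $f$-action of Lemma~\ref{action} (fixing $s_{0,1}$, swapping $s_{0,2}\leftrightarrow s_{1,2}$) these reduce to the four products $s_{0,1}^2$, $s_{0,2}^2$, $s_{0,1}s_{0,2}$ and $s_{0,2}s_{1,2}$. Now $s_{0,1}^2$ is given by the second identity of Lemma~\ref{a0s2f} after dividing by the unit $4(\alpha-2\beta)$, and I would obtain the remaining ones by the same device: for the products based at $0$ one feeds the combination $u_iu_j-v_iv_j+\lambda_{a_0}(v_iv_j)a_0\in V_0^{a_0}$ into $\ad_{a_0}$ and uses Lemmas~\ref{lambdaa} and~\ref{psi} to resolve the resulting $s_{0,i}s_{0,j}$, while for $s_{0,2}s_{1,2}$ one first rewrites $s_{1,2}=s_{0,2}^{f}$ through its $\ad_{a_0}$-decomposition. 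This last computation, together with checking that all emerging terms fall back into $U$, is the main obstacle: it is precisely here that the \emph{resurrection principle} is needed, and where the invertibility of $\alpha-2\beta$ and $\alpha-4\beta$ (the defining feature of the generic case) is indispensable. Once these products lie in $U$, we get $U\cdot U\subseteq U$, so $U$ is a subalgebra containing $a_0,a_1$, and therefore $U=\Vo$.
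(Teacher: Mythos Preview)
Your argument is correct and follows essentially the same route as the paper: one shows that the $\Ro$-span $U$ of the eight elements is stable under $T=\langle\tau_0,f\rangle$ and under multiplication by each $a_i$, and then closes up the remaining $s\cdot s$ products via the resurrection-type identities coming from the fusion law (Lemmas~\ref{lambdaa}--\ref{a0s2f}). The only cosmetic difference is that you first prove $a_i\in U$ for all $i$ by an explicit $\rho$-induction, whereas the paper gets this in one stroke from $T$-invariance: once $a_3\in U$ (Lemma~\ref{a3}), the set $B$ satisfies $B^{\tau_0}=B$ and $B^{f}\subseteq U$, so $U$ is $T$-stable, and then the $T$-orbit of $a_0$ is all of $\{a_i\mid i\in\Z\}$; your $\rho$-shift is just $\tau_0 f$ inside $T$. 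For the mixed product $s_{0,i}\cdot s_{1,2}$, the paper uses the identity $a_0\cdot(u_i u_{-1}+u_i v_{-1})=\alpha\,u_i v_{-1}$ rather than rewriting $s_{1,2}=s_{0,2}^{f}$ via its $\ad_{a_0}$-decomposition as you suggest, but both are instances of the same resurrection device and lead to the same conclusion.
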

\begin{proof}
Let $\bf U$ be the  $\Ro$-linear span in $\Vo$ of the set  
$$B:=\{ \ba_{-2}, \ba_{-1}, \ba_0, \ba_1, \ba_2, \bs_{0,1}, \bs_{0,2}, \bs_{1,2}\}.$$
By Lemmas~\ref{a0s2f} and~\ref{a3}, since $\al-2\bt$ and $\al-4\bt$ are invertible in $\Ro$, we get that
$$\ba_0\cdot \bs_{1,2}\in \bf U\:\: \mbox{ and }\:\:  \ba_3\in \bf U.$$
By Equation~(\ref{obviousrelation}) and Lemma~\ref{action}, the set $B$ (hence $\bf U$) is invariant under the action of $\tau_0$. Moreover,  by Equation~(\ref{obviousrelationf}) and Lemma~\ref{action},  since $\ba_{-2}^f=\ba_3$, $\bf U$ is also invariant under $f$. Since $T=\langle \tau_0, f\rangle$, 
\begin{equation}\label{Uinvariant}
{\bf U} \mbox{ is } T\mbox{-invariant}.
\end{equation}
By Equation~(\ref{s}), $\bf U$ contains $\ba_0\ba_1$ and  $\ba_0\ba_2$. Hence, by taking the $T$-orbits of those elements, we get, by Equation~(\ref{Uinvariant}), that 
\begin{equation}
 \ba_i \ba_j\in {\bf U}, \mbox{ for every } i,j\in \Z.
\end{equation}
Similarly, since by Lemma~\ref{primo}, for $i\in \{1,2\}$, $\ba_0\bs_{0,i}\in \bf U$, and  by Lemma~\ref{a0s2f}, $\ba_0 \bs_{1,2}\in \bf U$, we get that 
\begin{equation}
\ba_j \bs_{0,i} \in {\bf U}\mbox{ and } \ba_j \bs_{1,2} \in {\bf U},  \mbox{ for every } j\in \Z \mbox{ and } i\in \{1,2\}.
\end{equation}
%
By Lemma~\ref{psi} and Remark~\ref{remarkproducts}, it follows that 
\begin{equation}\label{}
\bs_{0,i}\cdot \bs_{0,j}\in {\bf U}, \mbox{ for every }i,j\in \{1,2\}. 
\end{equation}
 As  $ \bs_{0,1}\cdot \bs_{1,2}=(\bs_{0,1}\cdot \bs_{0,2})^f$ and  $\bs_{1,2}\cdot \bs_{1,2}=(\bs_{0,2}\cdot \bs_{0,2})^f$, and  $\bf U$ is invariant under $f$,  we have also 
 \begin{equation}
 \{\bs_{0,1}\cdot \bs_{1,2},\:\:  \bs_{1,2}\cdot \bs_{1,2}\}\subseteq  \bf U.
 \end{equation}  
 Finally, from the identity $\ba_3\cdot \ba_3-\ba_3=0$, we get 
 $$ \bs_{0,2}\cdot \bs_{1,2} \in \bf U.
 $$ 
 Hence $\bf U$ is closed under the algebra multiplication, that is $\bf U$ is a subalgebra of $\Vo$. Since $\Vo$ is generated by $\ba_0$ and $\ba_1$, we get that $\bf U=\Vo$. \end{proof}

\begin{remark}\label{struct}
Note that the above proof gives a constructive way to compute the structure constants of the algebra $\Vo$ relative to the spanning set $B$. This has been accomplished with the use of Singular~\cite{Singular} in~\cite[genericsakuma.s]{code}.
\end{remark}

\begin{cor}\label{Rin}
Let $\mathcal V_r=(\Ro,\Vo, (\ba_0, \ba_1), (\overline{\mathcal F}, \overline \star))$  be the initial object in the category $\mathcal O_r$.
Then $\Ro$ is generated as a $\hat D$-algebra by $\lambda_1$, $\lambda_2$,  $\lambda_1^f$, and $\lambda_2^f$.\\
\end{cor}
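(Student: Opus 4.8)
The plan is to prove that the $\hat D$-subalgebra $R':=\hat D[\lmu,\lmd,\lmf,\lmdf]$ of $\Ro$ equals $\Ro$. Since $\Ro=\aR/I_0$ with $\aR=\hat D[\Lambda]$, the ring $\Ro$ is generated as a $\hat D$-algebra by the images of the indeterminates $\lambda_{c,w}$, that is (by Lemma~\ref{wp}) by the scalars $\lambda_{c}(\overline w)$ with $c\in\A=\{a_0,a_1\}$ and $\overline w$ the image in $\Vo$ of an element $w\in W$; so it suffices to show each such $\lambda_c(\overline w)$ lies in $R'$. Two preliminary observations: first, $R'$ is stable under the automorphism $f$ of Corollary~\ref{baruf}, since $f$ fixes $\hat D$ pointwise and, being an involution, interchanges $\lmu\leftrightarrow\lmf$ and $\lmd\leftrightarrow\lmdf$; second, in the category $\mathcal O_g$ the elements $2$, $\bt$, $\al-\bt$, $\al-2\bt$ and $\al-4\bt$ are invertible with inverses lying in $\hat D\subseteq R'$, by the defining relations of $\mathcal O_g$.

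First I would set $M:=\sum_{b\in B}R'\,b$, the $R'$-span inside $\Vo$ of the spanning set $B$ of Proposition~\ref{span}, and show that $M$ is an $R'$-subalgebra containing $a_0,a_1$ and stable under $\tau_0$ and $f$. This is the core of the argument: one reruns the proof of Proposition~\ref{span} while tracking coefficients. Lemma~\ref{primo} writes $a_0s_{0,i}$ ($i\in\{1,2\}$) over $B$ with coefficients involving only $\lmu,\lmd$ and $\hat D$; multiplying the identities of Lemma~\ref{a0s2f} by $(\al-2\bt)^{-1}$ and $(4(\al-2\bt))^{-1}\in\hat D$ places $a_0s_{1,2}$ and $s_{0,1}\cdot s_{0,1}$ in $M$; dividing the identities of Lemma~\ref{a3} by $\bt(\al-\bt)^2(\al-4\bt)\in\hat D^{\times}$ gives $a_3,a_4\in M$. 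Since $\tau_0$ fixes $\Ro$ pointwise and permutes $B$, and $f$ now maps $B$ into $M$ (using $a_i^f=a_{1-i}$, $a_{-2}^f=a_3\in M$, $s_{0,1}^f=s_{0,1}$, $s_{0,2}^f=s_{1,2}$ from Lemma~\ref{action}), both automorphisms preserve $M$; combining this with Lemma~\ref{psi} to reduce the products $s_{0,i}\cdot s_{0,j}$ and $s_{0,i}\cdot s_{1,2}$ exactly as in Proposition~\ref{span}, every product of two elements of $B$ again lies in $M$. As all scalars that had to be inverted belong to $\hat D$, no coefficient ever leaves $R'$. Finally, because $W$ is the free magma on $\{a_0,a_1\}\subseteq M$, it follows that $\overline w\in M$ for every $w\in W$.

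Next I would check that $\lambda_{a_0}(b)\in R'$ for every $b\in B$. For the axes this is immediate from Proposition~\ref{lambdafunction}: $\lambda_{a_0}(a_0)=1$, $\lambda_{a_0}(a_{\pm1})=\lmu$, $\lambda_{a_0}(a_{\pm2})=\lmd$. For $s_{0,i}$ I would use the identity $\lambda_{a_0}(a_0v)=\lambda_{a_0}(v)$, which is immediate from the eigenspace decomposition~(\ref{justapply}), together with~(\ref{s}), to obtain $\lambda_{a_0}(s_{0,i})=(1-\bt)\lambda_i-\bt\in R'$ for $i\in\{1,2\}$. For $s_{1,2}$, which does not occur on the right-hand side of the first identity of Lemma~\ref{a0s2f}, I would apply $\lambda_{a_0}$ to that identity: the left-hand side becomes $(\al-2\bt)\lambda_{a_0}(s_{1,2})$ via $\lambda_{a_0}(a_0s_{1,2})=\lambda_{a_0}(s_{1,2})$, while the right-hand side involves only the values of $\lambda_{a_0}$ on $a_{\pm2},a_{\pm1},a_0,s_{0,1},s_{0,2}$ already shown to lie in $R'$; dividing by $(\al-2\bt)\in\hat D^{\times}$ gives $\lambda_{a_0}(s_{1,2})\in R'$.

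It then remains to assemble the pieces. For a generator $\lambda_{a_0}(\overline w)$, write $\overline w=\sum_{b\in B}r_b\,b$ with $r_b\in R'$ (possible since $\overline w\in M$); then $\lambda_{a_0}(\overline w)=\sum_b r_b\,\lambda_{a_0}(b)\in R'$ by $\Ro$-linearity of $\lambda_{a_0}$. For a generator $\lambda_{a_1}(\overline w)$, Lemma~\ref{omolambda} applied to $f$ gives $\lambda_{a_1}(\overline w)=\big(\lambda_{a_0}(\overline w^{\,f})\big)^{f}$; since $M$ is $f$-stable, $\overline w^{\,f}\in M$, so $\lambda_{a_0}(\overline w^{\,f})\in R'$, and applying $f$ keeps it in the $f$-stable ring $R'$. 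Hence every generator of $\Ro$ lies in $R'$, so $\Ro=R'$. The main obstacle is the second step: verifying that, throughout the reduction of Proposition~\ref{span}, every structure constant of $\Vo$ relative to $B$ actually lies in $R'$, i.e. that only elements invertible in $\hat D$ are ever inverted --- which is exactly what the explicit formulae of Lemmas~\ref{primo}, \ref{psi}, \ref{a0s2f} and~\ref{a3} guarantee.
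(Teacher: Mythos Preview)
Your proof is correct and follows the same overall strategy as the paper: use Proposition~\ref{span} together with the $f$-symmetry $\lambda_{a_1}(v)=(\lambda_{a_0}(v^f))^f$ to reduce to showing $\lambda_{a_0}(b)\in R'$ for each $b\in B$, then compute these values explicitly. Two points of comparison are worth noting. First, you are more careful than the paper on a step it leaves implicit: you explicitly verify that the $R'$-span $M$ of $B$ is closed under multiplication by tracking, through Lemmas~\ref{primo}, \ref{psi}, \ref{a0s2f}, \ref{a3}, that every coefficient produced in the proof of Proposition~\ref{span} lies in $R'$ and only elements of $\hat D^\times$ are inverted; the paper simply invokes Proposition~\ref{span} without justifying why an $\Ro$-spanning set suffices for a statement about the $\hat D$-subalgebra $R'$. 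Second, for $\lambda_{a_0}(s_{1,2})$ you apply $\lambda_{a_0}$ to the first identity of Lemma~\ref{a0s2f} and use $\lambda_{a_0}(a_0v)=\lambda_{a_0}(v)$, whereas the paper instead exploits the fusion relation $\lambda_{a_0}(u_1u_1+u_1v_1)=0$ together with Lemma~\ref{primo}; both routes yield the same element of $R'$.
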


\begin{proof}
By definition, $\Ro$ is generated a $\hat D$-algebra by the set
$$
\overline{\Lambda}:=\{ {\bar \lambda}_{\hat a_i, \hat w}\: | \hat w\in \hat W, i\in \{0,1\}\}.
$$
Thus we need to show that $\overline{\Lambda}$ lies in the $\hat D$-subalgebra $R^\ast$ of $\Ro$ generated by $\lambda_1$, $\lambda_2$,  $\lambda_1^f$, and $\lambda_2^f$.
Note first that, by Equation~(\ref{eqwpbis}) in Lemma~\ref{wpbis}, 
$$
{\bar \lambda}_{\hat a_i, \hat w}=\lambda_{\ba_i}(\bw).
$$
Therefore, since $\lambda_{\ba_i}$ is a linear function,   by Proposition~\ref{span}, we just need to show that, for every $\bv\in \{ \ba_{-2}, \ba_{-1}, \ba_0, \ba_1, \ba_2, \bs_{0,1}, \bs_{0,2}, \bs_{1,2}\}$,  and for every $i\in \{0,1\}$,
\begin{equation}\label{rast}
\lambda_{\ba_i}(\bv)\in R^\ast.
\end{equation} 
Moreover, since,  for every $\bv\in \Vo$, $\lambda_{\ba_1}(\bv)=(\lambda_{\ba_0}(\bv^f))^f$, and $\Ro=\Ro^f$, we are reduced to proving Equation~(\ref{rast}) for $i=0$.

By definition we have 
$$\lambda_{\ba_0}(\ba_0)=1, \:\:\lambda_{\ba_0}(\ba_1)=\lambda_1,\mbox{ and }\:\:\lambda_{\ba_0}(\ba_2)=\lambda_2.$$ Since $\tau_0$  is an $\Ro$-automorphism of $\Vo$ fixing $\ba_0$, we get
$$\lambda_{\ba_0}(\ba_{-1})=\lambda_{\ba_0}((\ba_{1})^{\tau_0})=\lambda_1,$$
$$\lambda_{\ba_0}(\ba_{-2})=\lambda_{\ba_0}((\ba_{2})^{\tau_0})=\lambda_2,$$
 and 
$$
\lambda_{\ba_0}(\bs_{0,1})=\lambda_{\ba_0}(\ba_0\ba_1-\beta \ba_0-\beta \ba_1)=\lambda_1-\beta-\beta \lambda_1\in R^\ast,
$$
$$
\lambda_{\ba_0}(\bs_{0,2})=\lambda_{\ba_0}(\ba_0\ba_2-\beta \ba_0-\beta \ba_2)=\lambda_2-\beta-\beta \lambda_2\in R^\ast.
$$
Finally, let $\bu_1$ and $\bv_1$ be as defined in Equation~(\ref{ai}). Substituting the expressions for $\bu_1$ and $\bv_1$ given in Lemma~\ref{primo} in the formula defining  $\bp_{1,1}$ (in Equation~(\ref{pq})), we obtain a vector $\bw$ in the $\Ro$-linear span of $\{ \ba_{-2}, \ba_{-1}, \ba_0, \ba_1, \ba_2, \bs_{0,1}, \}$ such that
\begin{equation}\label{asin2}
\bp_{1,1}=\bw+\frac{(\al-\bt)}{2\al}\bs_{1,2}.
\end{equation}
In particular, since $\lambda_{\ba_0}$ is linear, we get 
\begin{equation}\label{lambdaw}
\lambda_{\ba_0}(\bw)\in R^\ast.
\end{equation}
Since $\bu_1$, resp. $\bv_1$, is a $0$-eigenvector, resp. $\alpha$-eigenvector, for $\ad_{\ba_0}$, by the fusion law, $\bu_1\bu_1+\bu_1\bv_1$ is a sum of a $0$ and an $\al$-eigenvector for $\ad_{\ba_0}$, whence 
$$0=\lambda_{\ba_0}(\bp_{1,1})=
\lambda_{\ba_0}(\bw)+\frac{(\al-\bt)}{2\al}\lambda_{\ba_0}(\bs_{1,2}).
$$
Since $\al-\bt$ is invertible in $\Ro$, by Equation~(\ref{lambdaw}), we get  
$$
\lambda_{\ba_0}(\bs_{1,2})=-\frac{2\al}{\al-\bt}\lambda_{\ba_0}(\bw)\in R^\ast.
$$
\end{proof}

\begin{remark}\label{lambdaw2}
The explicit expression of $\lambda_{\ba_0}(\bs_{1,2})$ as a polynomial in $\lambda_1$, $\lambda_1^f$, $\lambda_2$, and $\lm_2^f$ can be computed as in Remark~\ref{struct}. We give it here for the convenience of the reader
\begin{equation}\label{lms12}
\lambda_{\ba_0}(\bs_{1,2})=\frac{2(\alpha-1)}{\alpha-\beta}\lambda_1^2-\frac{2(\alpha-1)}{\alpha-\beta}\lambda_1\lambda_1^f+(1-2\beta)\lambda_1+\beta\lambda_2-\beta.
\end{equation}
\end{remark}

\begin{lemma}\label{campo}
Let $\F$ be a field of characteristic other than $2$, $\al, \bt\in \F$ with $\al\not \in \{2\bt, 4\bt\}$. If $\mathcal V:=(V, \F, (a_0, a_1), \mathcal M(\al, \bt))$ is a primitive $2$-generated  axial algebra of Monster type $(\al, \bt)$, then  $\mathcal V\in \mathcal O_r$.
\end{lemma}
\begin{proof}
Condition O2 is obvious.
Since $\F$ is a field, $a_0$ and $a_1$ are free axes and $0,1,\al, \bt$ are pairwise distinguishable. In particular conditions O1 and O3 are satisfied with $k=2$. 
Since $\al\not \in \{2\bt, 4\bt\}$ and $\F$ has characteristic other than $2$, $\al-2\bt$, $\al-4\bt$, and $2$ are invertible in $\F$, therefore composing the map $\pi$ defined in Remark~\ref{dimenticato} (here $h=3$) with the evaluation $t_1\mapsto 2^{-1}$, $t_2\mapsto (\al-2\bt)^{-1}$, and $t_3\mapsto (\al-4\bt)^{-1}$, we get a ring homomorphism 
$$\eta\colon D\to \F$$
whose kernel contains the elements $2t_1-1$, $(\al-2\bt)t_2-1$, and $(\al-4\bt)t_3-1$. Thus condition O4 is also satisfied, hence $\mathcal V\in \mathcal O_r$.
\end{proof}
\bigskip

\begin{proposition}\label{reg}
Let $\F$ be a field of characteristic other than $2$, $\al, \bt \in \F$ with $\al\neq 4\bt$ and let $V$ be a 
 $2$-generated primitive axial algebra of Monster type $(\al, \bt)$ over $\F$. Then $V$ has dimension at most $8$.
\end{proposition}
\begin{proof}  If $\al\neq 2\bt$, by Lemma~\ref{campo}, $(V, \F, (a_0, a_1), \mathcal M(\al, \bt))\in \mathcal O_r$. Hence, by Corollary~\ref{tensor}, $V$ is isomorphic to a quotient of $\Vo\otimes_{\Ro } (\Ro\otimes_{\hat D}\F)$. Now the result follows since, by Proposition~\ref{span}, $\Vo$ is linearly spanned by the eight vectors $\ba_{-2}$, $\ba_{-1}$, $\ba_0$, $\ba_1$, $\ba_2$, $\bs_{0,1}$, $\bs_{0,2}$, and $\bs_{1,2}$. If $\al=2\bt$, then $(V, \F, (a_0, a_1), \mathcal M(\al, \bt))\in \mathcal O_{2\bt}$ and the result follows with a similar argument, using Proposition~3.7 in~\cite{FMS2} in place of Proposition~\ref{span}.
\end{proof}

\section{ The case $\al=4\bt$}\label{al=4bt}
 
Let $\mathcal V_{4\bt}=(\Ro,\Vo, (\ba_0, \ba_1), (\overline{\mathcal F}, \overline \star))$  be the initial object in the category $\mathcal O_{4\bt}$. Note that, since we are assuming $\al=4\bt$ and $\al\neq \bt$,  the characteristic of $\Ro$ is other than $3$.
We start by finding relations in the universal algebra $\Vo$. All the computations of this section have been accomplished with the use of Singular~\cite{Singular} in~\cite[genericsakuma-4bt.s]{code}.

\begin{lemma}\label{lemma6.9}
In the algebra $\Vo$ the following equality holds
\begin{eqnarray}\label{eqs} 
0&=&3\bt^2\left[8\lmu-(18\bt-1)\right ]\ba_{-1} \nonumber \\
&-&
\left[4(16\bt-1)\lmu^2+8(4\bt-1)\lmu\lmf-16\bt(13\bt-1)\lmu \right . \nonumber \\
&& \left .-8\bt(\bt-1)\lmf-12\bt^2\lmd+9\bt^2(10\bt-1)\right ]\ba_0\nonumber \\
&+& \left [8(4\bt-1)\lmu\lmf+4(16\bt-1)(\lmf)^2-8\bt(\bt-1)\lmu \right . \nonumber \\
&& \left .-16\bt(13\bt-1)\lmf-12\bt^2\lmdf+9\bt^2(10\bt-1)\right ]\ba_1 \nonumber \\
&-&3\bt^2\left [8\lmf-(18\bt-1)\right ]\ba_2 \nonumber \\
&+&8(2\bt+1)(\lmu-\lmf)\bs_{0,1} \nonumber \\
&-&18\bt^2(\bs_{0,2}-\bs_{1,2}). \nonumber 
\end{eqnarray}
\end{lemma}
\begin{proof}
This follows by Equation (1) in Lemma~\ref{a3} replacing $\al$ with $4\bt$ and dividing by $2\bt$.
\end{proof}

It will be convenient to have at hand also the next result, which is obtained by expliciting $\bs_{1,2}$ in the equation of Lemma~\ref{lemma6.9}.

\begin{cor}\label{eqs2f} 
In the algebra $\Vo$ the following equality holds
\begin{eqnarray*}
\bs_{1,2}&=&\bs_{0,2}-\frac{1}{6}\left[8\lmu-(18\bt-1)\right ]\ba_{-1}\\
&+&\frac{1}{18\bt^2}\left[4(16\bt-1)\lmu^2+8(4\bt-1)\lmu\lmf-16\bt(13\bt-1)\lmu \right . \nonumber \\
&& \left .-8\bt(\bt-1)\lmf-12\bt^2\lmd+9\bt^2(10\bt-1)\right ]\ba_0\nonumber \\
&-& \frac{1}{18\bt^2}\left [8(4\bt-1)\lmu\lmf+4(16\bt-1)(\lmf)^2-8\bt(\bt-1)\lmu \right . \nonumber \\
&& \left .-16\bt(13\bt-1)\lmf-12\bt^2\lmdf+9\bt^2(10\bt-1)\right ]\ba_1 \nonumber \\
&+&\frac{1}{6}\left [8\lmf-(18\bt-1)\right ]\ba_2 \nonumber \\
&-&\frac{4}{9\bt^2}(2\bt+1)(\lmu-\lmf)\bs_{0,1} \nonumber \\
\end{eqnarray*}
\end{cor}

\begin{lemma}\label{Rel} 
In the algebra $\Vo$ the following equalities hold
\begin{eqnarray}\label{eqs2}
0&=&3\bt^2\left [8\lmf-(18\bt-1)\right ](\ba_{-2} -\ba_2)\nonumber \\
&+&4\left[2(4\bt-1)\lmu\lmf+(16\bt-1)(\lmf)^2-2\bt(4\bt-1)\lmu-4\bt(13\bt-1)\lmf  \right . \nonumber \\
&& \left .-3\bt^2\lmdf+3\bt^2(12\bt-1)\right ](\ba_1-\ba_{-1}).\nonumber 
\end{eqnarray}
and 
\begin{eqnarray}\label{eqs3}
0&=&3\bt^2\left [8\lmu-(18\bt-1)\right ](\ba_{3} -\ba_{-1})\nonumber \\
&+&4\left[2(4\bt-1)\lmu\lmf+(16\bt-1){\lmu}^2-2\bt(4\bt-1)\lmf-4\bt(13\bt-1)\lmu  \right . \nonumber \\
&& \left .-3\bt^2\lmd+3\bt^2(12\bt-1)\right ](\ba_0-\ba_{2}).\nonumber 
\end{eqnarray}
\end{lemma}
\begin{proof}
The first equation is obtained by subtracting from the equation in Lemma~\ref{lemma6.9} its image under $\tau_0$. The second is obtained by applying $f$ to the first.
\end{proof}

\begin{cor}\label{cor:a0s2f}
In the algebra $\Vo$ the following inclusion holds
$$\{ \ba_0 \bs_{1,2}, \ba_1\bs_{0,2}\} \subseteq  \langle \ba_{-2},  \ba_{-1},  \ba_{0},  \ba_{1},  \ba_{2},  \ba_{3},  \bs_{0,1}, \bs_{0,2}\rangle .
$$  
\end{cor}
\begin{proof}
By the first equation in Lemma~\ref{a0s2f}, 
$$\ba_0 \bs_{1,2}\in \langle \ba_{-2},  \ba_{-1},  \ba_{0},  \ba_{1},  \ba_{2},  \bs_{0,1}, \bs_{0,2}\rangle .
$$ Applying $f$ we get
$$\ba_1 \bs_{0,2}\in \langle \ba_{3},  \ba_{2},  \ba_{1},  \ba_{0},  \ba_{-1},  \bs_{0,1}, \bs_{1,2}\rangle 
$$ and the result follows  by Lemma~\ref{eqs2f}.
\end{proof}

\begin{lemma}\label{rel1} 
In the algebra $\Vo$ the following equality holds
\begin{eqnarray*}
0&=&\frac{1}{3}\left [(4\bt-2)\lmu+(16\bt-1)\lmf-4\bt(9\bt-1)\right ]\ba_{-1}\\
&-&\frac{1}{18\bt^2}\left[4(32\bt^2-23\bt+3)\lmu^2+48\bt(4\bt-1)\lmu\lmf+2\bt(32\bt-2)(\lmf)^2 \right . \\
&& \left . -8\bt(82\bt^2-37\bt+3)\lmu-4\bt^2(106\bt-13)\lmf+6\bt^2(6\bt-1)\lmd \right . \\
&& \left . -12\bt^3\lmdf+9\bt^2(70\bt^2-17\bt+1)\right ]\ba_0\\
&-& \frac{1}{9\bt}\left [ 4(4\bt-1)\lmu \lmf+2(16\bt-1)(\lmf)^2-\bt(28\bt-10)\lmu-\bt(152\bt-11)\lmf \right . \\
&& \left .-6\bt^2\lmdf+18\bt^2(10\bt-1)\right ]\ba_1\\
&+&\frac{\bt}{6}\left [8\lmf-(18\bt-1)\right ]\ba_2\\
&-&\frac{1}{9\bt^2}\left  [ 4(4\bt-1)\lmu \lmf+2(16\bt-1)(\lmf)^2-8\bt(5\bt-2)\lmu-2\bt(100\bt-7)\lmf \right . \\
&& \left . -6\bt^2\lmdf+6\bt^2(48\bt-5) \right ]\bs_{0,1}\\
&+&\frac{1}{6}\left [ 8\lmf-(18\bt-1) \right ]\bs_{0,2}
\end{eqnarray*}
\end{lemma}
\begin{proof}
This is obtained by multiplying the expression for $\bs_{1,2}$ given in Lemma~\ref{eqs2f} by $\ba_0$ and subtracting the expression for $\ba_0\bs_{1,2}$ of Lemma~\ref{a0s2f}.
\end{proof}

\begin{lemma} \label{s1s2}
In the algebra $\Vo$ the following hold
\begin{enumerate}
\item $\lambda_{\ba_0}(\bs_{0,3})=\lm_3-\bt-\bt\lm_3$;
\item $\lambda_{\ba_0}(\bs_{1,3})=\lambda_{\ba_0}(\bs_{2,3})=-(2\bt-1)\lmu+\bt\lm_3-\bt$;
\item $\{ \ba_0\bs_{1,3}, \ba_0\bs_{2,3}\}\subseteq \langle  \ba_{-3}, \ba_{-2},  \ba_{-1}, \ba_0, \ba_1, \ba_2, \ba_3, \bs_{0,1}, \bs_{0,2}, \bs_{0,3} \rangle$;
\item $\ba_1\bs_{0,3}\in \langle   \ba_{-2},  \ba_{-1}, \ba_0, \ba_1, \ba_2, \ba_3, \ba_4, \bs_{0,1}, \bs_{1,2}, \bs_{1,3} \rangle$;
\item 
\begin{eqnarray*} 
\bs_{0,1}\bs_{0,2}&=&
\tfrac{3\bt^2}{8}\left [8(\lmu-\lmf)+2\bt-1\right ](\ba_{-2}+\ba_2)\\
&+&
\tfrac{1}{2}\left [2(4\bt-1)\lmu \lmf+(16\bt-1)(\lmf)^2+6\bt(10\bt-1)\lmu \right . \\
&&\left . +3\bt(4\bt-1)\lmf+6\bt^2\lmd-3\bt^2\lmdf -228\bt^3+21\bt^2\right ](\ba_{-1}+\ba_1)\\
&+&
\tfrac{1}{4\bt}\left [8(4\bt-1)\lmu^2\lmf+4(16\bt-1)\lmu(\lmf)^2-16\bt(21\bt-3)\lmu^2 \right . \\
&&\left . -4\bt(140\bt-17)\lmu\lmf-4\bt(16\bt-1)(\lmf)^2-12\bt(8\bt-1)\lmu\lmd \right . \\
&& \left . -24\bt^2\lmf \lmd-12\bt^2\lmu\lmdf+4\bt^2(423\bt-45)\lmu+4\bt^2(126\bt-15)\lmf \right . \\
&& \left . +\bt^2(198\bt-15)\lmd+12\bt^3\lmdf+12\bt^3\lmt-\bt^3(1338\bt-129)\right ] \ba_0\\
&+&
\tfrac{1}{2\bt}\left [-4(4\bt-1)\lmu\lmf-2(16\bt-1)(\lmf)^2+12\bt(10\bt-1)\lmu \right .\\
&& \left . +6\bt(36\bt-5)\lmf+12\bt^2\lmd+6\bt^2\lmdf-3\bt^2(163\bt-16)\right ]\bs_{0,1}\\
&+&
\tfrac{1}{4}\left [8(7\bt-1)\lmu+24\bt\lmf-174\bt^2+11\bt\right ] \bs_{0,2}\\
&+& 9\bt^2 \bs_{1,2}
-\frac{3\bt^2}{4}(4\bs_{0,3}-\bs_{1,3}-\bs_{2,3}).
\end{eqnarray*}
\end{enumerate}
\end{lemma}
\begin{proof}
As in the proof of Corollary~\ref{Rin}, we have
$$
\lambda_{\ba_0}(\bs_{0,3})=\lambda_{\ba_0}(\ba_0\ba_3-\bt \ba_0-\bt\ba_3)=\lm_3-\bt-\bt\lm_3,
$$
giving $(1)$.
To prove $(2)$, note first that $\bs_{1,3}-\bs_{2,3}$ is a $\bt$-eigenvector for $\ad_{\ba_0}$ since it is negated by $\tau_0$. Hence $\lambda_{\ba_0}(\bs_{1,3}-\bs_{2,3})=0$ by Lemma~\ref{lambdafunction}, whence  
\begin{equation}\label{lambdas13}
\lambda_{\ba_0}(\bs_{1,3})=\lambda_{\ba_0}(\bs_{2,3}).
\end{equation}

For $i\in \{1,2\}$, let $\bu_i$, $\bv_i$ be the $\ad_{\ba_0}$-eigenvectors defined by Equation~(\ref{ai}). Substituting the expressions for $\bu_i$ and $\bv_i$ given in Lemma~\ref{lambdaa} in the formula defining  $\bp_{1,2}$ (in Equation~(\ref{pq})), we obtain a vector $\bw$ in the $\Ro$-linear span of $\{\ba_{-3}, \ba_{-2}, \ba_{-1}, \ba_0, \ba_1, \ba_2, \ba_3, \bs_{0,1}, \bs_{0,2}, \bs_{1,2}, \bs_{0,3}\}$ such that
\begin{equation}\label{asin}
\bp_{1,2}=\bw+\frac{5}{32}(\bs_{1,3}+\bs_{2,3}).
\end{equation}
By the fusion law, $\bp_{1,2}=\bu_1\bu_2+\bu_1\bv_2\in \Vo_{0,\al}^{\ba_0}$, whence $\lambda_{\ba_0}(\bp_{1,2})=0$ again by Lemma~\ref{lambdafunction}. Thus
$$
\lambda_{\ba_0}(\bs_{1,3}+\bs_{2,3})=-\frac{32}{5}\lambda_{\ba_0}(\bw).
$$ 
Thus, by Equation~(\ref{lambdas13}), $\lambda_{\ba_0}(\bs_{1,3})=\frac{1}{2}\lambda_{\ba_0}(\bs_{1,3}+\bs_{2,3})=-\frac{16}{5}\lambda_{\ba_0}(\bw)$ and, computing the explicit expression of $\bw$, we get claim $(2)$ (see~\cite[genericsakuma-4bt.s]{code}).

Now, by the fusion law,
\begin{equation}\label{ulupo} 
\bu_1\bu_2\in \Vo_{0}^{\ba_0}.
\end{equation}
 Hence $\lambda_{\ba_0}( \bu_1\bu_2)=0$. Substituting  the values for $\bu_1$ and $\bu_2$ given in Lemma~\ref{lambdaa}, we get a vector $\bv\in \langle \ba_{-3}, \ba_{-2}, \ba_{-1}, \ba_0, \ba_1, \ba_2, \ba_3, \bs_{0,1}, \bs_{1,2}, \bs_{1,3}, \bs_{2,3} \rangle$ such that 
$$
\bu_1\bu_2=\bv+\frac{1}{16\bt^2}\bs_{0,1}\bs_{0,2}.
$$
 In particular 
\begin{equation}\label{lambdav}
\lambda_{\ba_0}(\bv)\in \hat D[\lmu, \lmf, \lmd, \lmdf, \lm_3]
\end{equation}
and, 
as above, we get
$$
\lambda_{\ba_0}(\bs_{0,1}\bs_{0,2})=-16\bt^2\lambda_{\ba_0}(\bv)\in \hat D[\lmu, \lmf, \lmd, \lmdf, \lm_3].
$$
Similarly, substituting  the values for $\bv_1$ and $\bv_2$ given in Lemma~\ref{lambdaa}, we get a vector 
$\overline{\bv}\in \langle \ba_{-3}, \ba_{-2}, \ba_{-1}, \ba_0, \ba_1, \ba_2, \ba_3, \bs_{0,1}, \bs_{1,2}, \bs_{1,3}, \bs_{2,3} \rangle$ such that
$$
\bv_1\bv_2=\overline{\bv}+\frac{1}{16\bt^2}\bs_{0,1}\bs_{0,2}.
$$
 Therefore, 
 $$\lambda_{\ba_0}(\bv_1\bv_2)=\lambda_{\ba_0}(\overline{\bv})+\frac{1}{16\bt^2}\lambda_{\ba_0}(\bs_{0,1}\bs_{0,2})\in \hat D[\lmu, \lmf, \lmd, \lmdf, \lm_3].
 $$

By the fusion law, $ \bv_1 \bv_2 - \lambda_{\ba_0}(\bv_1 \bv_2)\ba_0 \in \Vo_{0}^{\ba_0}$, whence by Equation~(\ref{ulupo}), 
 \begin{equation}
 \label{left12}
 \ba_0(\bu_1 \bu_2 - \bv_1 \bv_2 + \lambda_{\ba_0}(\bv_1 \bv_2)\ba_0)=0.
 \end{equation} 
 
Substituting  in the left side of~(\ref{left12}) the values for $\bu_1$, $\bu_2$, $\bv_1$, and $\bv_2$ given in Lemma~\ref{lambdaa} we get an expression of  $\ba_0(\bs_{1,3}+\bs_{2,3})$ as a linear combination of the vectors $\ba_{-3}, \ba_{-2}, \ba_{-1}, \ba_0, \ba_1, \ba_2, \ba_3, \bs_{0,1}, \bs_{1,2}, \bs_{1,3}, \bs_{2,3} $. Now $(3)$ follows, since $\ba_0(\bs_{1,3}-\bs_{2,3})=\bt(\bs_{1,3}-\bs_{2,3})$. Furthermore,
\begin{eqnarray*}
\ba_1\bs_{0,3}&=&(\ba_0\bs_{1,3})^f\in \langle  \ba_{-3}, \ba_{-2},  \ba_{-1}, \ba_0, \ba_1, \ba_2, \ba_3, \bs_{0,1}, \bs_{0,2}, \bs_{0,3} \rangle^f\\
&=&\langle  \ba_{4}, \ba_{3},  \ba_{2}, \ba_1, \ba_0, \ba_{-1}, \ba_{-2}, \bs_{0,1}, \bs_{1,2}, \bs_{1,3} \rangle .
\end{eqnarray*}

Finally, by (3) we can compute explicitly the vector $
(\al-2\bt)(\ba_0\bp_{1,2}-\al\bp_{1,2})
$
and $(4)$ follows by the formula in Lemma~\ref{psi} (see~\cite[genericsakuma-4bt.s]{code} for the explicit computation).
\end{proof}

\begin{lemma}\label{s2s2}
In the algebra $\Vo$ we have
$$
\bs_{0,2}\bs_{0,2}\in \langle \ba_{-4}, \ba_{-2} , \ba_{0}, \ba_{2} , \ba_4, \bs_{0,2}, \bs_{0,4}, \bs_{2,4}, \rangle .
$$
\end{lemma}
\begin{proof}
Let $\bu_2$ and $\bv_2$ be the $\ad_{a_0}$-eigenvectors defined in Equation~(\ref{ai}).

 By the fusion law, 
 \begin{equation}
 \label{left}
 \ba_0(\bu_2 \bu_2 - \bv_2 \bv_2 + \lambda_{\ba_0}(\bv_2 \bv_2)\ba_0)=0.
 \end{equation} 
 With an argument similar to that used in the proof of Lemma~\ref{s1s2} we first compute $\lambda_{\ba_0}(\bv_2 \bv_2)$. Then, 
substituting the values for $\bu_2$ and $\bv_2$ given in Lemma~\ref{lambdaa} in the left hand side of Equation~(\ref{left}),  we obtain a linear combination of the product  $\ba_0\bs_{2,4}$ and the vectors $\ba_{-4}, \ba_{-2} , \ba_{0}, \ba_{2} , \ba_4, \bs_{0,2}, \bs_{0,4}, \bs_{2,4}$. Since the coefficient of $\ba_0\bs_{2,4}$ is not zero, we can express $\ba_0\bs_{2,4}$ as a linear combination of $\ba_{-4}, \ba_{-2} , \ba_{0}, \ba_{2} , \ba_4, \bs_{0,2}, \bs_{0,4}, \bs_{2,4}$. This expression for $\ba_0\bs_{2,4}$ allows us to write explicitly  the vector  
$$
(\al-2\bt)(\ba_0\bp_{22}-\al \bq_{22})
$$ as a linear combination of $\ba_{-4}, \ba_{-2} , \ba_{0}, \ba_{2} , \ba_4, \bs_{0,2}, \bs_{0,4}, \bs_{2,4}$. Thus the claim follows from Lemma~\ref{psi}.
\end{proof}

\begin{lemma}\label{rel2} 
In the algebra $\Vo$ the following relation holds
\begin{eqnarray*}
0&=&\tfrac{3\bt^2}{8}\left [8(\lmu-\lmf)+2\bt-1\right  ] (\ba_{-2}-\ba_4)\\
&+& \tfrac{1}{2}\left  [ 2(4\bt-1)\lmu\lmf+(16\bt-1)(\lmf)^2+6\bt(10\bt-1)\lmu+3\bt(4\bt-1)\lmf \right .\\
&& \left . +6\bt^2\lmd-3\bt^2\lmdf-228\bt^3+21\bt^2 \right  ] (\ba_{-1}-\ba_3)\\
&+&\tfrac{1}{8\bt}\left  [
16(4\bt-1)\lmu^2\lmf+8(16\bt-1)\lmu(\lmf)^2-32\bt(21\bt-3)\lmu^2 \right .\\
&& \left . -8\bt(140\bt-17)\lmu\lmf-8\bt(16\bt-1)(\lmf)^2-24\bt(8\bt-1)\lmu\lmd-48\bt^2\lmf\lmd \right . \\
&& \left . -24\bt^2\lmu\lmdf+120\bt^2(28\bt-3)\lmu+24\bt^2(43\bt-5)\lmf+6\bt^2(66\bt-5)\lmd \right . \\
&& \left . +24\bt^3\lmdf+24\bt^3\lmt-9\bt^3(298\bt-29)\right ](\ba_0-\ba_2)\\
&-&\tfrac{15\bt^2}{4}(\bs_{0,3}-\bs_{2,3})
\end{eqnarray*}
\end{lemma}
\begin{proof}
Using Lemma~\ref{s1s2} we compute $\bs_{0,1}\bs_{0,2}-(\bs_{0,1}\bs_{0,2})^{\tau_1}$, obtaining the right handside of the equation in the statement. On the other hand, by Lemma~\ref{action}, we have $\bs_{0,1}\bs_{0,2}-(\bs_{0,1}\bs_{0,2})^{\tau_1}=0$, thus the result follows.
\end{proof}

\begin{lemma}\label{rel3} 
In the algebra $\Vo$ the following relation holds
\begin{eqnarray*}
0&=&
\tfrac{\bt^2}{8}\left [
8\lmu-24\lmf+42\bt-5\right ]\ba_{-2}\\
&+& \tfrac{1}{24}\left [
-72\lmu^2+8(52\bt-1)\lmu\lmf+12(16\bt-1)(\lmf)^2+8(71\bt^2+15\bt-1)\lmu \right .\\
&&\left .
-4(130\bt^2+15\bt-1)\lmf+48\bt^2\lmd
-36\bt^2\lmdf-\bt(2358\bt^2-159\bt-4)\right ] \ba_{-1}\\
&+&\tfrac{1}{36\bt^2}\left [
24(16\bt-1)\lmu^3-8(28\bt^2+13\bt+4)\lmu^2\lmf \right . \\
&& \left . +4(80\bt^2-25\bt+8)\lmu(\lmf)^2-2\bt(1464\bt^2+475\bt-52)\lmu^2 \right . \\
&& \left . -16\bt^2(226\bt-70)\lmu\lmf -16\bt(32\bt^2-7\bt+2)(\lmf)^2-36\bt^2(24\bt-1)\lmu\lmd \right .\\
&& \left . -24\bt^2(5\bt-2)\lmf\lmd-108\bt^3\lmu\lmdf+2\bt^2(7074\bt^2-173\bt-49)\lmu \right . \\
&& \left . +4\bt^2(720\bt^2-224\bt+8)\lmf+3\bt^3(588\bt-37)\lmd+36\bt^4\lmdf+108\bt^4\lmt \right . \\
&& \left .-9\bt^3(834\bt^2-63\bt-2)\right ]\ba_0\\
&+&\tfrac{1}{36\bt^2}\left [
36\bt(-16\bt+1)\lmu^2\lmf-4(8\bt^2+14\bt-4)\lmu(\lmf)^2\right .\\
&& \left . +4(128\bt^2-40\bt+2)(\lmf)^3+36\bt^2(16\bt-4)\lmu^2\right .\\
&& \left . +4\bt(1304\bt^2-96\bt-8)\lmu\lmf +2\bt(632\bt^2+123\bt-20)(\lmf)^2+108\bt^3\lmf\lmd \right . \\
&& \left . +216\bt^3\lmu\lmdf+12\bt^2(64\bt-7)\lmf\lmdf-4\bt^2(900\bt^2-152\bt-1)\lmu\right . \\
&& \left .-2\bt^2(6714\bt^2-317\bt-31)\lmf-36\bt^4\lmd-3\bt^2(588\bt^2-37\bt)\lmdf\right . \\
&& \left .-108\bt^4\lm_3^f+9\bt^3(834\bt^2-63\bt-2)\right ]\ba_1\\
&+&\tfrac{1}{24}\left [
-12(16\bt-1)\lmu^2-24(12\bt-3)\lmu\lmf-8(16\bt-1)(\lmf)^2\right . \\
&&\left . +4(58\bt^2-17\bt+1)\lmu-8\bt(35\bt-1)\lmf+36\bt^2\lmd-48\bt^2\lmdf\right . \\
&& \left . +\bt(2358\bt^2-159\bt-4)\right ]\ba_2\\
&+&\tfrac{\bt^2}{8}\left [24\lmu-8\lmf-42\bt+5 \right ]\ba_3\\
&+&\tfrac{1}{9\bt^2}\left [
9\bt(16\bt-7)\lmu^2+8(4\bt^2+4\bt+1)\lmu\lmf-(176\bt^2-31\bt+8)(\lmf)^2\right . \\
&& \left .-\bt(528\bt^2-197\bt+8)(\lmu-\lmf)+9\bt^3(\lmd-\lmdf )\right ]\bs_{0,1}\\
&+&\tfrac{1}{12}\left [4(50\bt-2)\lmu+8(5\bt-2)\lmf-3\bt(210\bt-11)\right ] \bs_{0,2}\\
&-&\tfrac{1}{12}\left [8(11\bt+1)\lmu+8(19\bt-4)\lmf-3\bt(210\bt-11)\right ]\bs_{1,2}\\
&-&\tfrac{15\bt^2}{4}(\bs_{0,3}-\bs_{1,3})
\end{eqnarray*}
\end{lemma}
\begin{proof}
The relation is obtained by computing the product $\bs_{0,1} \bs_{1,2}$ in two different ways and subtracting the two expressions. The first way is to multiply by $\bs_{0,1}$ the expression for $\bs_{1,2}$ as a linear combination of $\ba_{-2}$, $\ba_{-1}$, $\ba_0$, $\ba_1$, $\ba_2$, $\bs_{0,1}$, and $\bs_{0,2}$ given in Lemma~\ref{eqs2f}: here the products $\bs_{0,1}\ba_i$ are given in Lemma~\ref{primo}, the product $\bs_{0,1}\bs_{0,1}$ is given in Lemma~\ref{a0s2f}, and the product $\bs_{0,1} \bs_{0,2}$ is given in Lemma~\ref{s1s2}. The second way, since $\bs_{0,1}\bs_{1,2}=(\bs_{0,1}\bs_{0,2})^f$, is to apply $f$ to the 
expression for $\bs_{0,1}\bs_{0,2}$ in Lemma~\ref{s1s2}. 
\end{proof}

\begin{cor}\label{ch5}
Assume $5$ is invertible in $\Ro$. Then,
$$
\{\bs_{1,3}, \bs_{2,3}\} \subseteq \langle \ba_{-3}, \ba_{-2}, \ba_{-1}, \ba_{0}, \ba_{1}, \ba_{2}, \ba_{3}, \bs_{0,1}, \bs_{0,2}, \bs_{0,3}\rangle .
$$
\end{cor}
\begin{proof}
If $5$ is invertible in $\Ro$, by Lemma~\ref{rel3} we may express $\bs_{1,3}$ as a linear combination of $\ba_{-2}$, $\ba_{-1}$, $\ba_{0}$, $\ba_{1}$, $\ba_{2}$, $\ba_{3}$, $\bs_{0,1}$, $\bs_{0,2}$, $\bs_{1,2}$, $\bs_{0,3}$. As $\bs_{2,3}=(\bs_{1,3})^{\tau_0} $ and $\langle \ba_{-3}, \ba_{-2}, \ba_{-1}, \ba_{0}, \ba_{1}, \ba_{2}, \ba_{3}, \bs_{0,1}, \bs_{0,2}, \bs_{0,3}\rangle$ is invariant under $\tau_0$, we get the claim. 
\end{proof}

\begin{lemma}\label{rel23}
In the algebra $\Vo$ the following relation holds
\begin{eqnarray*}
0&=&\tfrac{\bt^2}{8}\left [-24\lmu+8\lmf+42\bt-5\right ] \ba_{-3} \\
&+&
\tfrac{1}{12}\left [6(16\bt-1)\lmu^2+36(4\bt-1)\lmu\lmf+4(16\bt-1)(\lmf)^2 -2(40\bt^2-17\bt+1)\lmu\right .\\
&& \left . +4\bt(26\bt-1)\lmf-18\bt^2\lmd+24\bt^2\lmdf -\bt(1170\bt^2-75\bt-2) \right ] \ba_{-2} \\
&+& \tfrac{1}{36\bt^2}\left [
36\bt(16\bt-1)\lmu^2\lmf+8(4\bt^2+7\bt-2)\lmu(\lmf)^2 \right . \\
&&-8(64\bt^2-20\bt+1)(\lmf)^3-144\bt^2(4\bt-1)\lmu^2-4\bt(1268\bt^2-87\bt-8)\lmu\lmf \\
&& -8\bt(122\bt^2+33\bt-5)(\lmf)^2-108\bt^3\lmf\lmd-216\bt^3\lmu\lmdf -12\bt^2(64\bt-7)\lmf\lmdf\\
&& +4\bt^2(1170\bt^2-179\bt-1)\lmu +2\bt^2(6822\bt^2-344\bt-31)\lmf +144\bt^4\lmd\\
&&  \left . +3\bt^3(570\bt-37)\lmdf +108\bt^4\lmt^f-9\bt^3(1290\bt^2-105\bt-2)\right ] \ba_{-1} \\
&+& \tfrac{1}{72\bt^2}\left [
-48(16\bt-1)\lmu^3+64(16\bt^2+\bt+1)\lmu^2\lmf+64(8\bt^2+2\bt-1)\lmu(\lmf)^2\right .\\
&& -4\bt(48\bt^2-691\bt+52)\lmu^2-8\bt^2(356\bt+127)\lmu\lmf \\
&& -8\bt(16\bt^2+19\bt-8)(\lmf)^2 
+144\bt^2\lmu\lmd-96\bt^2(2\bt+1)\lmf\lmd \\
&& +4\bt^2(486\bt^2-637\bt+49)\lmu 
 +8\bt^2(441\bt^2+89\bt-8)\lmf+12\bt^3(3\bt-4)\lmd\\
 &&\left .+144\bt^4\lmdf-9\bt^2(1014\bt^3-135\bt^2+4\bt) \right ] \ba_{0} \\
&+& \tfrac{1}{24}\left [
72\lmu^2-8(52\bt-1)\lmu\lmf-12(16\bt-1)(\lmf)^2-8(71\bt^2+15\bt-1)\lmu \right . \\
&&\left .+4(130\bt^2+15\bt-1)\lmf-48\bt^2\lmd+36\bt^2\lmdf+2358\bt^3-159\bt^2-4\bt \right ] \ba_{1} \\
&+& \tfrac{1}{4\bt}\left [
-8(4\bt-1)\lmu^2\lmf-4(16\bt-1)\lmu(\lmf)^2+48\bt(7\bt-1)\lmu^2 \right . \\
&& +4\bt(140\bt-17)\lmu\lmf+4\bt(16\bt-1)(\lmf)^2+12\bt(8\bt-1)\lmu\lmd+24\bt^2\lmf\lmd \\
&& +12\bt^2\lmu\lmdf -4\bt^2(421\bt-45)\lmu-4\bt^2(126\bt-15)\lmf-\bt^2(198\bt-15)\lmd \\
&& \left .-12\bt^3\lmdf-12\bt^3\lmt+8\bt^3(165\bt-16) \right ] \ba_{2} \\
&+& \tfrac{1}{2}\left [
-2(4\bt-1)\lmu\lmf-(16\bt-1)(\lmf)^2-6\bt(10\bt-1)\lmu-3\bt(4\bt-1)\lmf \right . \\
&& \left . -6\bt^2\lmd+3\bt^2\lmdf+3\bt^2(76\bt-7) \right ] \ba_{3} \\
&+& \tfrac{1}{8}\left [
-24\bt^2(\lmu-\lmf)-3\bt^2(2\bt-1) \right ] \ba_{4} \\
&+&\tfrac{1}{9\bt^2} \left [
-9\bt(16\bt-7)\lmu^2-8(2\bt+1)^2\lmu\lmf+(176\bt^2-31\bt+8)(\lmf)^2 \right . \\
&&
\left . +\bt(528\bt^2-197\bt+8)(\lmu-\lmf)-9\bt^3(\lmd-\lmdf) \right ] \bs_{0,1} \\
&+&
\tfrac{1}{12}\left [-8(25\bt-1)\lmu-8(5\bt-2)\lmf+3\bt(210\bt-11) \right ] \bs_{0,2} \\
&+&
\tfrac{1}{12}\left [8(11\bt+1)\lmu+8(19\bt-4)\lmf-3\bt(210\bt-11) \right ] \bs_{1,2} 
\end{eqnarray*}
\end{lemma}
\begin{proof}
The result follows by subtracting the image under $\tau_0$ of the equation in Lemma~\ref{rel3} from the equation in Lemma~\ref{rel2}.
\end{proof}

\begin{theorem}\label{newth}
Let $\F$ be a field of characteristic other than $2$ and $3$ and let $\bt\in \F\setminus \{ \frac{1}{2}\}$.
Let $V$ be a $2$-generated primitive axial algebra of Monster type $(4\bt, \bt)$ over $\F$ with generators $a_0, a_1$. Then 
\begin{enumerate}
\item if $\lambda_{a_0}(a_1)\neq \frac{18\bt-1}{8}\neq\lambda_{a_1}(a_0)$, then $V=\langle a_{-1}, a_0, a_1, a_2, s_{0,1}, s_{0,2} \rangle$;
\item if $\lambda_{a_0}(a_1)\neq \frac{18\bt-1}{8}=\lambda_{a_1}(a_0)$, then $V=\langle a_{-2}, a_{-1}, a_0, a_1, a_2, s_{0,1}, s_{0,2} \rangle$;
\item if $\lambda_{a_0}(a_1)= \frac{18\bt-1}{8}\neq \lambda_{a_1}(a_0)$, then $V=\langle a_{-3}, a_{-2}, a_{-1}, a_0, a_1, s_{0,1}, s_{1,2} \rangle$;
\item if $\lambda_{a_0}(a_1)= \lambda_{a_1}(a_0)=\frac{18\bt-1}{8}$, then $V=\langle a_{-2}, a_{-1}, a_0, a_1, a_2, a_3, s_{0,1}, s_{0,2}\rangle$.
\end{enumerate}
\end{theorem}
\begin{proof}
By Corollary~\ref{tensor}, there are a surjective ring homomorphism 
$$
\varphi_R\colon \tilde \F=\Ro\otimes _{\hat D}\F \to \F
$$ 
and a surjective algebra homomorphism  
$$
\varphi_V\colon \Vo\otimes_{\Ro} \tilde \F\to V
$$
mapping $\ba_i\otimes 1$ to $a_i$, for $i\in \Z$. We identify the image under $\varphi_R$ of an element of $\Ro\otimes_{\hat D} \F$ with the element itself while we denote by $v$ the element $(\bv\otimes 1)^{\varphi_V}$, for $\bv\in \Vo$. In particular, we write $\lambda_i$ in the place of $\lambda_{a_0}(a_i)$, for $i\in \Z$, $\lmf$ in place of $\lambda_{a_1}(a_0)$, $\lmdf$ in place of $\lambda_{a_1}(a_{-1})$, and  $\lmt^f$ in place of $\lambda_{a_1}(a_{-2})$\footnote{Recall that in general $f$ is not a homorphism of $\F$, so that here $\lambda_i^f$ is just a notation and does not necessarily denote the image of $\lambda_i$ under $f$.}. When we apply $\varphi_V$ to the relations obtained so far in this section  we get similar relations in $V$, where  $\ba_{i}$ are replaced by $a_{i}$ for $i\in \Z$ and $\bs_{i,n}$ are replaced by $s_{i,n}$, respectively.
\medskip

{\bf Claim 1.} {\it If $\lmu\neq \frac{18\bt-1}{8}$ and  $\lmf\neq \frac{18\bt-1}{8}$, then $V=\langle a_{-1}, a_0, a_1, a_2, s_{0,1}, s_{0,2} \rangle$.}
\medskip

Let $U:=\langle a_{-1}, a_0, a_1, a_2, s_{0,1}, s_{0,2} \rangle$. 
By hypothesis, the coefficient of $a_{-2}-a_2$  in the first equation (resp. $a_{3}-a_{-1}$  in the second equation) of Lemma~\ref{Rel} is not zero, whence, by Lemma~\ref{Rel} and Lemma~\ref{eqs2f}, we get 
\begin{equation}
\{a_{-2}, a_3, s_{1,2}\} \subseteq  U.
\end{equation} 
It follows that 
\begin{equation}\label{Utau0}
U^{\tau_0}=\langle a_{1}, a_{0}, a_{-1}, a_{-2}, s_{0,1}, s_{0,2} \rangle=U
\end{equation}
 and 
similarly 
\begin{equation}\label{Utau1}
U^{\tau_1}=\langle a_{3}, a_{2}, a_{1},a_{0}, s_{0,1}, s_{0,2} \rangle=U.
\end{equation}
Hence, $a_i\in U$ for every $i\in \Z$. Moreover, by Equation~(27) and Lemma~\ref{primo},
\begin{equation}\label{a0U}
a_0U\subseteq U.
\end{equation}
Since $a_3\in U$, by Equation~(\ref{a0U}) we get 
\begin{equation}\label{s3inU}
s_{0,3}\in U.
\end{equation}
By Lemma~\ref{action}, $s_{2,3}=s_{0,3}^{\tau_1}$ and $s_{1,3}=s_{2,3}^{\tau_0}$, whence, by Equations~(\ref{Utau0}),~(\ref{Utau1}), and~(\ref{s3inU}), it follows that 
$\{s_{0,3} , s_{1,3}, s_{2,3}\}\subseteq U,
$
which implies that  
$$a_ia_j\in U \:\mbox{ for every }\: \{i,j\}\subseteq \{-1, 0, 1, 2\}.
$$ 
By Lemma~\ref{primo} and Equations~(\ref{Utau0}) and~(\ref{Utau1}), it follows that 
$$
a_is_{0,1}\in U\: \mbox{ for every } \:i\in \{-1, 0, 1, 2\}.
$$
 By Corollary~\ref{cor:a0s2f}, $a_1s_{0,2}\in U$. Since $a_{-1}s_{0,2}=(a_1s_{0,2})^{\tau_0}$ and $a_{2}s_{0,2}=(a_0s_{0,2})^{\tau_1}$, again by Equations~(\ref{Utau0}) and~(\ref{Utau1}),  we have 
 $$
 \{a_1s_{0,2}, a_{-1}s_{0,2}, a_{2}s_{0,2} \}\subseteq U.
 $$
  Finally, for $\{i,j\}\subseteq \{1, 2\}$, $s_{0,i}s_{0,j}\in U $ by Lemmas~\ref{a0s2f},~\ref{s1s2}, and ~\ref{s2s2}. Therefore $U$ is a subalgebra of $V$. Since $a_0, a_1\in U$, we have $V=U$.
\medskip

{\bf Claim 2.} {\it
Set 
$$
P(x)=\frac{40\bt^2-14\bt+1}{12\bt^2}x-\frac{(10\bt-1)^2}{192\bt^2}\in \F[x].
$$
\begin{enumerate}
\item If $\lmu= \frac{18\bt-1}{8}$, then either $a_{0}=a_2$ or 
$
\lmd=P(\lmf).
$
\item If $\lmf= \frac{18\bt-1}{8}$, then either $a_{-1}=a_1$ or 
$
\lmdf=P(\lmu).
$
\end{enumerate}}
\medskip

If $\lmu= \frac{18\bt-1}{8}$, then the second equation in Lemma~\ref{Rel} becomes
$$
0=12\bt^2(P(\lmu^f)-\lmd)(a_0-a_2)
$$
and (1) follows.  Similarly for the second claim, using the first equation in Lemma~\ref{Rel}.

\medskip

{\bf Claim 3.} {\it Assume $\lmu\neq  \frac{18\bt-1}{8}=\lmf$. Then $V=\langle a_{-2}, a_{-1}, a_0, a_1, a_2, $ $s_{0,1}, s_{0,2} \rangle$}.
\medskip

In this case let $U:=\langle a_{-2}, a_{-1}, a_0, a_1, a_2, s_{0,1}, s_{0,2} \rangle$. Notice that
\begin{equation}\label{a0U1}
a_0U\leq U\:\mbox{ and }\: U^{\tau_0}=U.
\end{equation}
Since $\lmu\neq \frac{18\bt-1}{8}$, by the second equation in Lemma~\ref{Rel} we get 
\begin{equation}\label{eqa3}
a_{3}\in \langle a_{-1}, a_0, a_2\rangle,
\end{equation}
whence $a_0a_3\in \langle a_{-1}, a_0, a_2, s_{0,1}, s_{0,2} \rangle,$
so, by the definition of $s_{0,3}$, it follows that 
\begin{equation}\label{eqs03}
s_{0,3}\in \langle a_{-1}, a_0, a_2, s_{0,1}, s_{0,2} \rangle .
\end{equation}
Then, by Equations~(\ref{eqa3}) and (\ref{eqs03}), 
\begin{eqnarray*}
s_{2,3}&=&(s_{0,3})^{\tau_1}\in \langle a_{-1}, a_0, a_2, s_{0,1}, s_{0,2} \rangle^{\tau_1}\\
&=&\langle a_{3}, a_2, a_0, s_{0,1}, s_{0,2} \rangle \leq \langle a_{-2}, a_0, a_1, s_{0,1}, s_{0,2} \rangle
\end{eqnarray*}
and similarly
$$
s_{1,3}=(s_{2,3})^{\tau_0}\in \langle a_{1}, a_{-2}, a_0, s_{0,1}, s_{0,2} \rangle .
$$
Therefore, by the definitions of $s_{1,3}$ and $s_{2,3}$, we get
\begin{equation}\label{s13}
\{a_3, a_{-2}a_1, a_{-1}a_2, s_{0,3}, s_{1,3}, s_{2,3}\}\subseteq U.
\end{equation}

Since by assumption
\begin{equation}\label{lmf=}
\lmf=\frac{18\bt-1}{8},
\end{equation}
 by Claim 2 either $a_{-1}=a_1$ or $\lmdf=P(\lmu)$. Assume $a_{-1}=a_1$, whence, by the definition of $s_{1,2}$,  $s_{1,2}\in \langle a_1\rangle$. Then, by~\cite[Theorem~1.1]{axet}, $a_{-2}=a_0$ and so, by Lemma~\ref{primo}, $\langle\langle a_{0}, a_2\rangle \rangle=\langle a_{0}, a_2, s_{0,2}\rangle$. Since $s_{1,2}\in \langle a_1\rangle$,  by Lemma~\ref{eqs2f}, we get $s_{0,2}\in \langle a_{0}, a_1, a_{2}, s_{0,1}\rangle=U$. By Lemmas~\ref{primo} and~\ref{a0s2f}, $\langle a_{0}, a_1, a_{2}, s_{0,1}\rangle$ is closed under the algebra product, so must be equal to $V$.
Now assume 
\begin{equation}\label{lmdf=P}
\lmdf=P(\lmu).
\end{equation}
 Then (see~\cite[genericsakuma-4bt.s]{code}) the equation in Lemma~\ref{rel1} becomes
\begin{eqnarray}\label{val}
0&=&
\frac{2\bt-1}{24}\left (16\lmu-1\right )(a_{-1}+a_{1}) \nonumber \\
&-&\frac{1}{36\bt^2} \left [ 
8(32\bt^2-23\bt+3)\lmu^2-2\bt(264\bt^2-178\bt+19)\lmu \right .\\
&&\left .+12\bt^2(6\bt-1)\lmd-3\bt^2(22\bt-3) \right ] a_0 \nonumber \\
&+& \frac{2\bt-1}{12\bt}\left (16\lmu-1\right )s_{0,1}. \nonumber 
\end{eqnarray}
If $\lmu\neq \frac{1}{16}$, then, since we are assuming $\bt\neq \tfrac{1}{2}$, by Equation~(\ref{val}) we get $s_{0,1}\in \langle a_{-1}, a_0, a_1\rangle$, whence $ \langle a_{-1}, a_0, a_1\rangle$ is a subalgebra of $V$, hence $V=\langle a_{-1}, a_0, a_1\rangle$ and the claim holds. So assume  
\begin{equation}\label{lmu=1/16}
\lmu= \frac{1}{16}.
\end{equation}
 Then Equation~(\ref{val}) becomes
$$
\left (-\frac{6\bt-1}{3}\lmd+\frac{1056\bt^3-344\bt^2+33\bt-1}{384\bt^2}\right )a_0=0, 
$$
whence
\begin{equation}\label{lmd=}
\lmd=\frac{1056\bt^3-344\bt^2+33\bt-1}{128\bt^2(6\bt-1)}.
\end{equation}
Substituting in the equation of Lemma~\ref{rel2} the values of $\lmu$, $\lmf$, $\lmd$, and $\lmdf$ given in Equations~(\ref{lmf=}),~(\ref{lmdf=P}),~(\ref{lmu=1/16}),and~(\ref{lmd=}), we get that the coefficient of $a_4$ in that equation is equal to 
$$
\frac{3\bt^2(32\bt-1)}{16}.
$$
If $\bt\neq \frac{1}{32}$, we can express $a_4$ as a linear combination of $a_{-2}, a_{-1}, a_0, a_2, a_3, s_{0,3}, s_{2,3}$ and so, by Equation~(\ref{s13}), we get $a_4\in U$. Thus $U$ is invariant under $\tau_0$ and $\tau_1$, whence $a_i\in U$ for every $i\in \Z$. As in the proof of Claim 1 we get $U=V$.

Finally assume $\bt=\frac{1}{32}$. In this case, since by hypothesis $\bt\neq \frac{1}{2}$, $\F$ has characteristic other than $5$. As previously, substituting, this time, in the equation of Lemma~\ref{rel3} the above values of $\lmu$, $\lmf$, $\lmd$, and $\lmdf$ we get that the coefficient of $a_{-2}$ is equal to  $-\frac{15}{65536}\neq 0$. Thus, by that equation, we get
$$ 
a_{-2}\in \langle  a_{-1}, a_0, a_1, a_2, a_3, s_{0,1}, s_{0,2}, s_{1,2}, s_{0,3}, s_{1,3} \rangle ,
$$ 
whence, by Lemma~\ref{eqs2f} and Equation~(\ref{s13}),
$$
a_{-2}\in \langle a_{-1}, a_0, a_1, a_2, s_{0,1}, s_{0,2} \rangle .
$$ 
As in the previous case, we conclude that $V=\langle a_{-1}, a_0, a_1, a_2, s_{0,1}, s_{0,2} \rangle$. 
\medskip

{\bf Claim 4}. {If $\lmu= \frac{18\bt-1}{8}$ and $\lmf\neq \frac{18\bt-1}{8}$, $V=\langle a_{-3}, a_{-2}, a_{-1}, a_0, a_1, s_{0,1}, s_{1,2} \rangle$.}
\medskip

By the symmetry of the hypothesis, this follows from Claim 3 swapping the even subalgebra and the odd subalgebra.
\medskip

{\bf Claim 5}. {If $\lmu=\lmf= \frac{18\bt-1}{8}$, then $V=\langle a_{-2}, a_{-1}, a_0, a_1, a_2, a_3, s_{0,1}, s_{0,2}\rangle$.}
\medskip

Set 
\begin{equation}\label{omega}
\omega:=P\left (\frac{18\bt-1}{8}\right )=\frac{480\bt^3-228\bt^2+28\bt-1}{64\bt^2}.
\end{equation}
By Claim 2, we have 
$$
\mbox{ either } a_0=a_2, \mbox{ or } a_1=a_{-1},\mbox{  or } \lmd=\lmdf=\omega .
$$
Assume first $a_0=a_2$. Then, $s_{0,2}\in \langle a_0\rangle$ and so
by Lemma~\ref{eqs2f}, it follows that  $s_{1,2}\in 
\langle a_{-1}, a_0, a_{1}\rangle$. Thus, by Lemmas~\ref{primo} and~\ref{a0s2f}, $\langle a_{-1}, a_0, a_{1}, s_{0,1}\rangle$ is closed under the algebra product and hence must be equal to $V$. Similarly, if $a_{-1}=a_1$, we get $V=\langle a_0, a_{1}, a_2, s_{0,1}\rangle$.  

Assume now $a_0\neq a_2$, $a_{-1}\neq a_1$ and $\lmd=\lmdf=\omega$. Substituting in the equation of Lemma~\ref{rel1} the value $\frac{18\bt-1}{8}$ in place of $\lmu$ and $\lmf$, and the value $\omega$ in place of $\lmd$, and $\lmdf$, we get 
\begin{eqnarray*}
0&=&(2\bt-1)(12\bt-1)\left [\frac{1}{8}(a_{-1}+a_1)-\frac{48\bt^2-18\bt+1}{64\bt^2}a_0+\frac{1}{4\bt}s_{0,1}\right ].
\end{eqnarray*}
Furthermore, applying $f$ to the equation of Lemma~\ref{rel1} and again substituting the above values of $\lmu$, $\lmf$, $\lmd$, and $\lmdf$, we get 
\begin{eqnarray*}
0&=&(2\bt-1)(12\bt-1)\left [\frac{1}{8}(a_{2}+a_0)-\frac{48\bt^2-18\bt+1}{64\bt^2}a_1+\frac{1}{4\bt}s_{0,1}\right ].
\end{eqnarray*}
Hence, if $\bt\neq \tfrac{1}{12}$, we get $a_{-1}\in \langle a_0, a_1, s_{0,1}\rangle$ and  
$a_{2}\in \langle a_0, a_1, s_{0,1}\rangle$. By Lemma~\ref{primo}, it follows that $\langle a_0, a_1, s_{0,1}\rangle$ is a subalgebra of $V$, whence $V=\langle a_0, a_1, s_{0,1}\rangle$.

From now on we assume $\bt=\tfrac{1}{12}$, thus, by Equation~(\ref{omega}) and the assumptions in Claim 5, we have
$$
\lmu=\lmf=\lmd=\lmdf=\frac{1}{16}.
$$
Moreover, since by hypothesis $\bt\neq \tfrac{1}{2}$, $\F$ has characteristic other than $5$. Set
$$
U:=\langle a_{-3}, a_{-2}, a_{-1}, a_0, a_1, a_2, a_3, s_{01,}, s_{0,2}\rangle . 
$$
Replacing each of $\lmu$, $\lmf$, $\lmd$, and $\lmdf$ by $\frac{1}{16}$ in the equation of Corollary~\ref{eqs2f} we get
$$
s_{1,2}=s_{0,2}.
$$
Computing the value of the function $\lambda_{a_0}$ on both sides of the equation in Lemma~\ref{rel3}, we get $0=\frac{1}{768}(\lmt-\lmt^f)$,  whence 
$$
\lmt^f=\lmt.
$$ 
It follows that
equation in Lemma~\ref{rel23} (multiplied by 2304/5) gives
\begin{eqnarray} \label{a4}
a_4&=&-a_3+\tfrac{8}{5}(6\lmt-1)(a_2-a_{-1})+a_{-2}+a_{-3},
\end{eqnarray}
whence $a_4\in \langle a_{-3}, a_{-2}, a_{-1}, a_2, a_3\rangle\leq U$.
Applying $\tau_1$ to the above equation we get
\begin{eqnarray*} \label{a5}
a_5&=&a_{-2}+a_{-1}-\tfrac{8}{5}(6\lmt-1)(a_0-a_{3})-a_{4},
\end{eqnarray*}
whence $a_{5}\in \langle a_{-3}, a_{-2}, a_{-1}, a_0, a_2, a_3\rangle \leq U$. It follows that  $U$ is invariant under the action of $\tau_0$ and $\tau_1$, whence $a_i\in U$ for every $i\in \Z$. 
By Lemma~\ref{rel3} we get (see~\cite[genericsakuma-4bt.s]{code})
\begin{equation}\label{S}
s_{1,3}=s_{0,3}-\tfrac{1}{20}(16\lmt-1)(a_0-a_1)+\tfrac{1}{12}(a_{-2}-a_3).
\end{equation}
and so
\begin{equation}\label{S2}
s_{2,3}=s_{1,3}^{\tau_0}=s_{0,3}-\tfrac{1}{20}(16\lmt-1)(a_0-a_{-1})+\tfrac{1}{12}(a_2-a_{-3}).
\end{equation}
Multiplying by $a_1$ both sides of Equation~(\ref{S}) and subtracting the fourth equation in Lemma~\ref{primo} (with $i=3$), by Equation~(\ref{a4}), we get
\begin{eqnarray}\label{Relnew}
0&=&\tfrac{1}{96}a_{-2}+\tfrac{5}{288}a_{-1}-(\tfrac{1}{10}\lmt -\tfrac{17}{720})a_0-(\tfrac{1}{15}\lmt -\tfrac{31}{1440})a_1+\tfrac{5}{288}a_2+\tfrac{1}{144}a_3 \nonumber\\
&&-(\tfrac{4}{5}\lmt -\tfrac{7}{15})s_{0,1}+\tfrac{1}{3}s_{0,2}
+\tfrac{1}{12}s_{0,3}, 
\end{eqnarray}
whence $s_{0,3}\in U$ and so, by Lemma~\ref{primo},
\begin{equation}\label{a00U}
a_0U\leq U.
\end{equation} 
Furthermore, subtracting from Equation~(\ref{Relnew}) its image under $\tau_0$ (and multiplying by $12$) we get  
\begin{eqnarray*}\label{Relnew1}
0&=&(\tfrac{4}{5}\lmt-\tfrac{1}{20})(a_1-a_{-1})+\tfrac{1}{12}(a_{-3}+a_{-2}-a_{2}-a_{3}), 
\end{eqnarray*}
whence $U=\langle a_{-2}, a_{-1}, a_0, a_1, a_2, a_3, s_{01,}, s_{0,2}\rangle$. 
Finally, by Equations~(\ref{S}) and~(\ref{S2}), we have
\begin{equation}\label{s33}
\{s_{1,3}, s_{2,3}\}\subseteq U.
\end{equation}
By Equations~(\ref{a4}),~(\ref{a5}), and~(\ref{a00U}) and the invariance of $U$ under the actions of $\tau_0$ and $\tau_1$, it follows that 
$$
\{ s_{0,4}, s_{2,4}, s_{0,5}, s_{3,5} \} \subseteq U. 
$$
Therefore, 
as in the previous cases we see that $U$ is a subalgebra of $V$, whence $V=U$ and the claim is proved.
\end{proof}

\noindent {\em Proof of Theorem~\ref{Felix}.}
The result follows from Proposition~\ref{reg}, Theorem~\ref{newth}, and~\cite[Theorem~1.1]{FMS2}.
\hfill$\square$

\section{The regular case} \label{generic}

In this section we prove Theorems~\ref{nec} and~\ref{symmetric}. As in Section~\ref{table}, we keep the notation of Section~\ref{universal}.
Let $\mathcal V=(\Ro,\Vo, \Amo, (\overline{\mathcal F}, \overline \star))$  be the initial object in the category $\mathcal O_r$. Note that in this case $\hat D$ is a non trivial quotient of 
$$\Z[1/2, x_1, x_2, x_1^{-1}, x_2^{-1}, (x_1-x_2)^{-1}, (x_1-2x_2)^{-1},(x_1-4x_2)^{-1}].
$$
By Corollary~\ref{Rin}, $\Ro=\hat D[\lambda_1, \lambda_1^f, \lambda_2, \lambda_2^f]$. The elements $\lambda_1, \lambda_1^f, \lambda_2, \lambda_2^f$ are not necessarily indeterminates on $\hat D$, as they have to satisfy the relations imposed by the definition of $\Ro$. In particular, 
\begin{lemma}\label{rels3}
In the ring $\Ro$ the following equalities hold
\begin{enumerate}
\item $\lambda_{\ba_0}(\bs_{2,3}-(\bs_{2,3})^f)=0$,
\item $\lambda_{\ba_0}(\ba_{4}\ba_4-\ba_4)=0$,
\item $\lambda_{\ba_1}(\bs_{2,3}-(\bs_{2,3})^f)=0$,
\item $(\lambda_{\ba_0}(\ba_{4}\ba_4-\ba_4))^f=0$.
\end{enumerate}
\end{lemma}
\begin{proof}
Assertions {\it (1)} and {\it (3)} follow since by Lemma~\ref{action},  $\bs_{2,3}-\bs_{2,3}^f=0$ in $\Vo$. Assertion {\it (2)} and  {\it (4)} follow since, by definition, $\ba_4$ is an idempotent.
\end{proof}

The four identities in Lemma~\ref{rels3} produce four polynomials $p_i(x,y,z,t)$ for $i\in \{1, \ldots , 4\}$ in $\hat D [x,y,z,t]$ (with $x,y,z,t$ algebraically independent indeterminates on $\hat D$), that simultaneously annihilate on the quadruple $(\lambda_1, \lambda_1^f, \lambda_2, \lambda_2^f)$.  Precisely,  let 
$$B:=\{\ba_{-2}, \ba_{-1}, \ba_{0}, \ba_{1}, \ba_{2}, \bs_{0,1}, \bs_{0,2}, \bs_{1,2}\}
$$
as in the proof of Proposition~\ref{span}. By Lemma~\ref{a3} we can write explicitly $\ba_3$ as a linear combination of the elements of $B$ with coeffcients in $\Ro$. By Lemmas~\ref{primo} and~\ref{a0s2f}, for every $\bf b\in B$, we can write $\ba_0\bf b$ (hence $\bs_{0,3}$, $\bs_{2,3}=(\bs_{0,3})^{\tau_1}$, and $\bs_{2,3}^f$) again as a $\Ro$-linear combinations of the elements of $B$.  Using linearity of $\lambda_{\ba_0}$ and $\lambda_{\ba_1}$ and Corollary~\ref{Rin}, we get the desired polynomials. As mentioned in the Introduction, these polynomials are too large to be displayed here but can be computed using~\cite[genericsakuma.s]{code}.

For $i\in \{1,2\}$, define
$$q_i(x,z):=p_i(x,x,z,z).
$$ 
\bigskip

{\it Proof of Theorem~\ref{nec}}.
Let $\F$ be a field of characteristic other than $2$ and let $(V, \F, (a_0, a_1), \mathcal M(\al, \bt)) \in \mathcal M_r(2,\F)$. By Lemma~\ref{campo},  $(V, \F, (a_0, a_1), \mathcal M(\al, \bt)) \in \mathcal O_r$.
Let $\mathbb A^4_\F$ be the $4$-dimensional affine space over $\F$ and let 
$$
 P_V:=(\lambda_{ a_0}( a_1), \lambda_{ a_1}( a_0), \lambda_{ a_0}( a_2), \lambda_{ a_1}( a_{-1}))\in \mathbb A^4_\F.
 $$
Let 
\begin{equation*}
\begin{array}{ccccc}
 \xi &\colon &\mathcal M_r(2,\F) &\to &\mathbb A^4_\F \\
  & & V & \mapsto & P_V.
  \end{array}
  \end{equation*}
 
Set $\tilde \F:=\Ro\otimes_{\hat D} \F$ and  let $\varphi_\F\colon \tilde \F\to \F$ be the map defined in Corollary~\ref{tensor}.(1) (with $R=\F$). Let $\F_0$ be the prime subfield of $\F$. Since $(V, \F, (a_0, a_1), \mathcal M(\al, \bt)) \in \mathcal O_r$, there is a ring homomorphism 
 $$
 \eta: \hat D \to \F_0(\al, \bt)
 $$
which extends  to a homomorphism $\bar \eta$ of the corresponding polynomial rings on the indeterminates $x$, $y$, $z$, $t$:
$$
\bar \eta: \hat D[x,y,z,t]\to  \F_0(\al, \bt)[x,y,z,t].
$$   
By Lemma~\ref{rels3}, for every $i\in \{1,2,3,4\}$, 
$$
p_i(\lm_1, \lmf, \lmd, \lmdf)=0 \mbox{ in } \Ro.
$$
Hence, by Corollary~\ref{tensor}, 
$$
p_i^{\bar \eta}(P_V)=(p_i(\lm_1, \lmf, \lmd, \lmdf)\otimes 1)^{\varphi_\F}=(0 \otimes 1)^{\varphi_\F}=0  \mbox{ in } \F.
$$
So, $\im (\xi) \subseteq \Var(p_1^{\bar \eta}, p_2^{\bar \eta}, p_3^{\bar \eta}, p_4^{\bar \eta})$, 
proving (1).

Now set
$$
U_{P_V}:=\frac{\Vo\otimes_{\Ro} \tilde \F}{(\Vo\otimes_{\Ro} \tilde \F)\ker (\varphi_\F)}.
$$
Note that, by the definition of $U_{P_V}$, $\ker (\varphi_\F)\leq ann_{\tilde \F}(\Ro\otimes_{\hat D} \F)$, hence $U_{P_V}$ is an $\F$-algebra via $\varphi_\F$. Moreover, by Equation~(\ref{ciserve3}), $(\Vo\otimes_{\Ro} \tilde \F)\ker (\varphi_\F)\leq \ker (\varphi_V)$, whence $V$ is isomorphic to a quotient of $U_{P_V}$. 
A straightforward check shows that $\Vo\otimes_{\Ro} \tilde \F$ (resp. $U_{P_V}$) is a primitive axial algebra of Monster type $(\alpha, \beta)$ with generating axes $a_0\otimes 1$ and $a_1\otimes 1$ (resp. their images in $U_{P_V}$), which proves (2).
\hfill $\square$
\bigskip

\begin{remark}\label{casosimmetrico}
Assume, under the hypotheses of Theorem~\ref{nec}, that the algebra $V$ is also symmetric. Then 
$$\lambda_{ a_0}(a_1)= \lambda_{a_1}(a_0) \mbox{ and } 
\lambda_{a_0}(a_2),=\lambda_{a_1}(a_{-1})
$$
so the pair $ (\lambda_{a_0}(a_1), \lambda_{a_0}(a_2))$ is a common zero of the polynomials $
q_1^{\bar \eta}$ and $q_2^{\bar \eta}.
$
\end{remark}
Computing the resultant of the polynomials $q_1$ and 
$q_2$ with respect to $z$ one obtains a polynomial in $x$ of degree at most 10, which is the product of the five linear factors
$$
x,\:\: x-1\:\:, 2x-\al, \:\: 2x-\bt, \:\:4(2\al-1)x-(3\al^2+3\al\bt-\al-2\bt)
$$
and a factor of degree at most $5$ (see~\cite[genericsakuma.s]{code}).  
The last factor has degree $5$ and is irreducible in $\Q(\al, \bt)[x]$, if $\al$ and $ \bt$ are indeterminates over $\Q$. On the other hand, for certain values of $\alpha$ and $\beta$, this factor can be reducible: for example, it even completely splits in $\Q(\al, \bt)[x]$ when $\al=2\bt$ (see~\cite{FMS2}), or in the Norton-Sakuma case, when $(\al, \bt)=(1/4, 1/32)$ (see the proof of Theorem~\ref{thm} below). 

\begin{remark}\label{evenodd}
Under the hypotheses of Theorem~\ref{nec},  let 
\begin{enumerate}
\item []  $V_e:=\langle \langle a_0, a_2\rangle \rangle$ (the {\it even subalgebra} of $V$) 
\item [] $V_o:=\langle \langle  a_{-1}, a_1\rangle \rangle$ (the {\it odd  subalgebra} of $V$). 
\end{enumerate}
Since the automorphism  $\tau_{a_0}$ (resp. $\tau_{a_1}$) swaps $a_0$ and $a_2$ (resp. $a_{-1}$ and $a_1$),  $V_e$ and $V_o$ are symmetric (primitive $2$-generated axial algebras of Monster type $(\al, \bt)$). Using Yabe's classification of the symmetric case~\cite[Theorem~2]{Yabe} one can see that the last two coordinates $
\lambda_{a_0}(a_2)$ and $\lambda_{a_1}(a_{-1})
$ of $P_V$ vary in a finite and relatively small set (getting a bound of $16^2$, which can be further reduced using the theory of axets in~\cite{axet}). 
\end{remark}

\section{The generic case}\label{truegeneric}

As an application, we now classify $2$-generated primitive axial algebras of Monster type $(\al, \bt)$ over the field $\F:=\Q(\al, \bt)$, with $\al$ and $\bt$ independent indeterminates over $\Q$ (the {\it generic} case). We keep the notation of the previous sections, in particular, 
$V$ is an axial algebra of Monster type $(\al, \bt)$ over the field $\F$ generated by the two axes $a_0$ and $a_1$. Set
 $$
\bar \lambda_1:=\lambda_{a_0}(a_1), \:  \bar \lambda_1^\prime:=\lambda_{a_1}(a_0), \:
 \bar \lambda_2:=\lambda_{a_0}(a_2), \:\mbox{ and } \bar \lambda_2^\prime:=\lambda_{a_1}(a_{-1}). 
 $$
By Theorem~\ref{nec}, $V$ is isomorphic to a quotient of the algebra $U_P$ uniquely determined by the quadruple $P:= (\bar \lambda_1, \bar \lambda_1^\prime, \bar \lambda_2,  \bar \lambda_2^\prime)\in \Var(p_1, p_2, p_3, p_4)$.
By Remark~\ref{casosimmetrico} and  Remark~\ref{evenodd}, $\bar \lambda_2$ and   $\bar \lambda_2^\prime$ have to be the first coordinates of a point lying in $\Var(q_1, q_2)$. Set
$$
  \nu:= 
\frac{(3\al^2+3\al\bt-\al-2\bt)}{4(2\al-1)}.$$

\begin{lemma}\label{solutions}
With the above notation 
\begin{enumerate}
\item ${\Var} (q_1, q_2)=\{(1,1), (0,1), (\tfrac{\bt}{2}, \tfrac{\bt}{2}),(\tfrac{\al}{2}, 1 ), (\nu, \nu)\}$. 
\item In particular, 
$\{\bar \lambda_2, \bar \lambda_2^\prime\} \subseteq  \{ 1, 0, \tfrac{\al}{2}, \tfrac{\bt}{2}, \nu \}$. 
\end{enumerate}
\end{lemma}
\begin{proof}
The first assertion is obtained by solving computationally the system 
\begin{align*}
\left \{ \begin{array}{c}
q_1(x,z)=0\\
q_2(x,z)=0
\end{array}
\right .
\end{align*}
in~\cite[genericsakuma.s]{code}. More specifically, we first computed the resultant with respect to $z$ of the two polynomials and then the  solutions  of that resultant. The second assertion follows from the above discussion,  by taking the first coordinates of the points in ${\Var} (q_1, q_2)$.
\end{proof}

\begin{lemma}\label{solgen}
With the above notation,  let $Q=(x_0, y_0, z_0, t_0)\in \mathbb A^4$ and let 
$\{z_0, t_0\}\subseteq  \{ 1, 0, \tfrac{\al}{2}, \tfrac{\bt}{2}, \nu \}$. Then $Q\in {\Var}(p_1, p_2, p_3, p_4)$  if and only if 
$$
Q\in \{(1,1,1,1),\:\: (0,0,1,1), \:\: (\tfrac{\bt}{2}, \tfrac{\bt}{2}, \tfrac{\bt}{2}, \tfrac{\bt}{2}),\:\: ( \tfrac{\al}{2}, \tfrac{\al}{2}, 1,1 ), \:\: (\nu, \nu, \nu, \nu)\}.
$$
\end{lemma}
\begin{proof}
Using~\cite[genericsakuma.s]{code} one can check that the above five points lie in ${\Var}(p_1, p_2, p_3, p_4)$. Conversely, 
for every choice of $\{z_0, t_0\}\subseteq  \{ 1, 0, \tfrac{\al}{2}, \tfrac{\bt}{2}, \nu \}$ and for every $i\in \{2,3\}$, let  
 $r_{1,i}(x, z_0, t_0)$ be the resultant, with respect to $y$, between $p_1(x,y, z_0, t_0)$ and $p_i(x,y, z_0, t_0)$.
The common zeros  $(x_0, z_0, t_0)$ of $r_{1,2}(x, z_0, t_0)$ and $r_{1,3}(x, z_0, t_0)$, computed in~\cite[genericsakuma.s]{code},  are  those in the following set
$$
\{(1, 1, 1), \:(0, 1, 1), \:(\tfrac{\al}{2}, 1, 1), \:(0, 1, 0), \:(0, 1, \tfrac{\bt}{2}),
\:(0, 1, \tfrac{\al}{2}), \:(0, 1, \nu), \: (\tfrac{\bt}{2}, \tfrac{\bt}{2}, \tfrac{\bt}{2}), \:(\nu, \nu, \nu)\}.
$$
Finally, using~\cite[genericsakuma.s]{code}, we get that   the equation 
$$
p_3(x_0,y, z_0, t_0)=0
$$ 
in the indeterminate $y$ has no solution in $\F$, if 
$$
(x_0, z_0, t_0)\in \{0,1,0), (0, 1, \tfrac{\bt}{2}), (0, 1, \tfrac{\al}{2}), (0, 1, \nu)\},
$$ 
i.e. $z_0\neq t_0$. For each of the  remaining cases (again using~\cite[genericsakuma.s]{code}), we get a unique solution in $\F$, precisely
$$
\begin{array}{llllll}
p_3(1,y,1,1)=0&  \Longrightarrow  &y=1,& \Longrightarrow  &(x_0, y_0, z_0, t_0)=(1,1,1,1), \\
p_3(0,y,1,1)=0& \Longrightarrow  &y=0,& \Longrightarrow  &(x_0, y_0, z_0, t_0)=(0,0,1,1),\\
p_3(\tfrac{\al}{2},y,1,1)=0& \Longrightarrow  &y=\tfrac{\al}{2},& \Longrightarrow  &(x_0, y_0, z_0, t_0)=( \tfrac{\al}{2}, \tfrac{\al}{2}, 1,1 ),\\
p_3(\tfrac{\bt}{2},y,\tfrac{\bt}{2},\tfrac{\bt}{2})=0& \Longrightarrow  &y=\tfrac{\bt}{2},& \Longrightarrow &(x_0, y_0, z_0, t_0)=(\tfrac{\bt}{2}, \tfrac{\bt}{2}, \tfrac{\bt}{2}, \tfrac{\bt}{2}),\\
p_3(\nu, y, \nu, \nu)=0 &\Longrightarrow &y= \nu,& \Longrightarrow  &(x_0, y_0, z_0, t_0)=(\nu, \nu, \nu, \nu),
\end{array}
$$
proving the result.
\end{proof}

\begin{lemma}\label{simple}
The algebras $3C(\al)$, $3C(\bt)$, and  $3A(\al, \bt)$ over the field $\F$ are simple $2$-generated symmetric axial algebras of Monster type $(\al, \bt)$. 
\end{lemma}
\begin{proof}
For the algebras $3C(\al)$ and  $3C(\bt)$ the claim is proved in~\cite[Example~3.4]{HRS1}. 
  The algebra $3A(\al, \bt)$ is the algebra $3A^\prime_{\al, \bt}$ defined by Reheren in~\cite[Table~9]{R}. By~\cite[Theorem~8.1]{R}, it is a $2$-generated symmetric axial algebra of Monster type $(\al, \bt)$. By~\cite[Lemma~8.2]{R}, it admits a Frobenius form which is non degenerate over the field $\F$ and such that all the generating axes are non-singular with respect to this form. Hence, by Theorem~4.11 in~\cite{KMS}, every non trivial ideal contains at least one of the generating axes. Then, Corollary~4.6 in~\cite{KMS} yields that the algebra is simple.
\end{proof}

\noindent {\em Proof of Theorem~\ref{symmetric}.}
The algebras  $1A$, $2B$, $3C(\al)$, $3C(\bt)$, and $3A(\al, \bt)$ are $2$-generated symmetric axial algebras of Monster type $(\al, \bt)$: this is trivial for $1A$ and $2B$, and follows by Lemma~\ref{simple} in the remaining cases.

Conversely, let $V$ be an axial algebra of Monster type $(\al, \bt)$ over the field $\F$ generated by the two axes $a_0$ and $a_1$. Set
 $$
\bar \lambda_1:=\lambda_{a_0}(a_1), \:  \bar \lambda_1^\prime:=\lambda_{a_1}(a_0), \:
 \bar \lambda_2:=\lambda_{a_0}(a_2), \:\mbox{ and } \bar \lambda_2^\prime:=\lambda_{a_1}(a_{-1}). 
 $$
By Theorem~\ref{nec}, $V$ is isomorphic to a quotient of the algebra $U_P$ uniquely determined by the quadruple $P:= (\bar \lambda_1, \bar \lambda_1^\prime, \bar \lambda_2,  \bar \lambda_2^\prime)$, which belongs to $\mathcal \Var(p_1, p_2, p_3, p_4)$. By Remark~\ref{evenodd}, Remark~\ref{casosimmetrico} and Lemma~\ref{solgen}, $P$ is an element of the set
$$
\mathcal P:=\{(1,1,1,1),\:\: (0,0,1,1), \:\:(\tfrac{\bt}{2}, \tfrac{\bt}{2}, \tfrac{\bt}{2}, \tfrac{\bt}{2} ),\:\: (\tfrac{\al}{2}, \tfrac{\al}{2}, 1,1  ), (\nu, \nu, \nu, \nu)\}.
$$
By Corollary~\ref{tensor} and Proposition~\ref{span}, $V$ is linearly spanned as an $\F$-vector space by the set 
$$B:=\{a_{-2}, a_{-1}, a_{0}, a_{1}, a_{2}, s_{0,1}, s_{0,2}, s_{1,2}\}.
$$
 Define
$$
d_0:=s_{2,3}-s_{1,3}^{\tau_0}, \:\:
d_1:=d_0^f, \:\;   
d_2:={d_0}^{\tau_1},
$$
and, for $ i\in \{0,1,2\}$,
$$ 
D_i:={d_i}^{\tau_0}-d_i. 
$$
The basic idea of the proof is to extract, for each choice of  $P\in \mathcal P$, from the set $B$ a basis for $V$ and show that the structure constants relative to these bases  are the same as those of the target algebras. To achieve that, we write in two different ways the vectors $d_i$ and $D_i$, for $i\in \{0,1,2\}$, as linear combinations of the elements of $B$. 
On one side, by Lemma~\ref{action}, all vectors  $d_i, D_i$ for $i\in \{0,1,2\}$ must be equal to the zero vector. On the other, following Remark~\ref{struct}, we can express these vectors also as non-trivial linear combinations of the elements of $B$. The coefficients of these linear combinations have been computed in~\cite[genericsakuma.s]{code}\footnote{In the sequel, for the convenience of the reader, we have cited explicitly each time the code~\cite[genericsakuma.s]{code} has been used.}. In particular, for every $P\in \mathcal P$, the coefficient of $a_{-2}$ in $D_0$ is non zero, hence, using~\cite[genericsakuma.s]{code}, we can express $a_{-2}$ as a linear combination of $a_{-1}$, $a_{0}$, $a_{1}$,$a_{2}$, $s_{0,1}$, $s_{0,2}$, and $s_{1,2}$. Similarly, since the coefficient of $s_{1,2}$ in $d_0$ is non zero, using~\cite[genericsakuma.s]{code}, we can express $s_{1,2}$ as a linear combination of $a_{-1}$, $a_{0}$, $a_{1}$, $a_{2}$, $s_{0,1}$, and $s_{0,2}$. So $V$ is linearly spanned by 
$$
B_1:=\{a_{-1}, a_{0}, a_{1}, a_{2}, s_{0,1}, s_{0,2}\}.
$$
In particular, Remark~\ref{struct} (by~\cite[genericsakuma.s]{code}) gives an expression of the product $s_{0,1}s_{0,2}$ as a linear combination of the vectors in $B_1$.

Assume 
$$
P\in  \{(\tfrac{\bt}{2}, \tfrac{\bt}{ 2}, \tfrac{\bt}{ 2}, \tfrac{\bt}{ 2} ),  (\nu, \nu, \nu, \nu) \}.
$$ 
Arguing as above, from the identity $d_2=0$ by~\cite[genericsakuma.s]{code} we get $a_{-1}=a_2$ and consequently, by Equation~(\ref{s}) and Lemma~\ref{invariances},
\begin{equation}\label{identity}
s_{0,2}=a_0a_2-\bt(a_0+a_2)=a_0a_{-1}-\bt(a_0+a_{-1})=s_{0,1}.
\end{equation}
So, in this case, $V$ is linearly spanned by 
$$\{a_{0}, a_{1}, a_{2}, s_{0,1}\}.
$$
If $P= (\nu, \nu, \nu, \nu )$, then by Equation~(\ref{s}), Lemma~\ref{primo}, and Lemma~\ref{a0s2f}, by~\cite[genericsakuma.s]{code} we see that $V$ satisfies the same multiplication table as $3A(\al, \bt)$, whence, by Lemma~\ref{simple}, $V\cong 3A(\al, \bt)$.
 If $P=(\frac{\bt}{2}, \frac{\bt}{ 2}, \frac{\bt}{2}, \frac{\bt}{2})$, then from Equation~(\ref{identity}), by~\cite[genericsakuma.s]{code}, we get $s_{0,1}s_{0,2}-s_{0,1}s_{0,1}=0$. Using the expression of $s_{0,1}s_{0,2}$ as a linear combination of the elements of $B_1$, by~\cite[genericsakuma.s]{code}, we get
$$
s_{0,1}=-\frac{\bt}{2}(a_0+a_1+a_2),
$$ 
proving that  $V$ satisfies the same multiplication table as the algebra $3C(\bt)$. Since, again by Lemma~\ref{simple},  $3C(\bt)$ is simple, we get $V\cong 3C(\bt)$. 

Assume  
$$P \in 
\{(1,1,1,1), (0,0,1,1),  (\tfrac{\al}{2}, \tfrac{\al}{2}, 1,1 ) \}.
$$ 
From the identity $D_2=0$, by~\cite[genericsakuma.s]{code}, we get 
\begin{equation}\label{identity4}
a_{-1}=a_1,
\end{equation}
 and consequently, by Equation~(\ref{s}),
  \begin{equation}\label{identity2}
s_{1,2}=a_{-1}a_1-\bt(a_{-1}+a_1)=a_1-2\bt a_1=(1-2\bt)a_1
\end{equation} 
and, by Lemma~\ref{invariances},
\begin{equation}\label{identity3} 
s_{0,2}=s_{1,2}^f=(1-2\bt)a_1^f=(1-2\bt)a_0.
\end{equation}
So, in this case, $V$ is linearly spanned by 
$$\{a_{0}, a_{1}, a_{2}, s_{0,1}\}.
$$
If $P \in 
\{ (0,0,1,1),  (\tfrac{\al}{2}, \tfrac{\al}{2}, 1,1 ) \},
$
then,  from the identity $d_2=0$, by~\cite[genericsakuma.s]{code}, we deduce $a_2=a_0$.  It follows that, if $P=\left (\frac{\al}{2}, \frac{\al}{2}, 1,1 \right )$, then $V$ satisfies the same multiplication table as the algebra $3C(\al)$ and so, by Lemma~\ref{simple}, $V\cong 3C(\al)$. If $P=(0,0,1,1)$,  then, since  $a_0s_{1,2}=(2\bt-1)a_0a_1$, by the formula for the product $a_0s_{1,2}$ in Lemma~\ref{a0s2f} we get $s_{0,1}=-\bt(a_0+a_1)$, whence $a_0 a_1=0$. Hence in this case $V$ is isomorphic to the  algebra $2B$.
Finally, suppose $P=(1,1,1,1)$. Then, from identity $D_0=0$, Equations~(\ref{identity4}), ~(\ref{identity2}),~(\ref{identity3}), and~\cite[genericsakuma.s]{code}, we get $a_{-2}=a_2$. Hence, as in the previous case, from the identity $a_0s_{1,2}=(2\bt-1)a_0a_1$ we can express $s_{0,1}$ as a linear combination of $a_0$, $a_1$ and $a_2$. Then, again using~\cite[genericsakuma.s]{code}, from the identity  $d_2-d_2^{\tau_1}=0$ we get $a_2=a_0$ and from    $d_2=0$ we get $a_0=a_1$, that is $V$ is the algebra $1A$.
\hfill$\square$
\medskip

\begin{proof}[Proof of Corollary~\ref{3-transposition}]
Let $W$ be as in the statement of Corollary~\ref{3-transposition}, let $a$ and $b$ be two axes of $W$, let $\tau_a$ and $\tau_b$ be the corresponding Miyamoto involutions, and let $V$ be the subalgebra of $W$ generated by $a$ and $b$. Then $V$ falls into one of the five cases of Theorem~\ref{symmetric}. We may assume the type of $V$ is not $1A$, otherwise the result is trivial, so $a\neq b$. Let $x:=(\tau_a \tau_b)^n$ where $n=1$, if $V$ is of type $2B$ or $3C(\al)$, and $n=3$ otherwise.  Then $x$ acts trivially on $V$: this follows immediately for the algebras $2B$ and $3C(\al)$, since in these algebras the $\bt$-eigenspace for the adjoint action of every axis is trivial, by~\cite[Example~(3.3)]{HRS1} for the algebra $3C(\bt)$, and  by~\cite[Lemma~8.3]{R} for the algebra $3A(\al, \bt)$. In particular $x$ commutes with $\tau_a$. On the other hand $\tau_a$ acts as the inversion on $\langle \tau_a\tau_b\rangle$, in particular it inverts $x$, thus  
$x$ has order at most $2$. 
If $n=1$ we are done. Assume $n=3$. 
Let $c:=a^{\tau_b}$, then $c\in V$ and  $b=a^{\tau_c}$ (this follows from~\cite[Example~(3.3)]{HRS1}, in case $V$ has type $3C(\bt)$, and  from~\cite[Table 9]{R}, in case $V$ has type $3A(\al, \bt)$). So 
$$\tau_b = \tau_a^{\tau_c}\mbox{ and } \tau_c=\tau_a^{\tau_b}\in \langle \tau_a, \tau_b\rangle.$$ 
In particular $\langle \tau_a, \tau_b\rangle $ cannot be  a dihedral group of order $12$ since in such a group two generating involutions are never conjugate. Whence $x=1$ also in this case, so $\tau_a\tau_b$ has order at most $3$.
\end{proof}

\section{Proof of Theorem~\ref{thm}}
Let $\F$ be a field of characteristic $0$ and let $V$ be a primitive axial algebra of Monster type $(\tfrac{1}{4}, \tfrac{1}{32})$ over the field $\F$ generated by the two axes $a_0$ and $a_1$. We proceed as in Section~\ref{truegeneric}. Set
 $$
\bar \lambda_1:=\lambda_{a_0}(a_1), \:  \bar \lambda_1^\prime:=\lambda_{a_1}(a_0), \:
 \bar \lambda_2:=\lambda_{a_0}(a_2), \:\mbox{ and } \bar \lambda_2^\prime:=\lambda_{a_1}(a_{-1}). 
 $$
By Theorem~\ref{nec}, $V$ is isomorphic to a quotient of the algebra $U_P$ uniquely determined by the quadruple $P:= (\bar \lambda_1, \bar \lambda_1^\prime, \bar \lambda_2,  \bar \lambda_2^\prime)\in \Var(p_1, p_2, p_3, p_4)$.
By Remark~\ref{casosimmetrico} and  Remark~\ref{evenodd}, $\bar \lambda_2$ and   $\bar \lambda_2^\prime$ have to be the first coordinates of a point lying in ${\Var}(q_1, q_2)$. 

\begin{lemma}\label{solutionsm}
With the above notation, let  $Q=(x_0,z_0)\in \mathbb A^2$. If $Q\in  {\Var} (q_1, q_2)$, then 
$$
x_0\in \{1, 0, \tfrac{1}{8}, \tfrac{1}{32}, \tfrac{1}{64}, \tfrac{13}{2^8}, \tfrac{3}{2^7}, \tfrac{5}{2^8}\}.
$$
\end{lemma}
\begin{proof}
The assertion follows since the resultant (computed with Singular in~\cite[majorana.s]{code}) with respect to $z$ of the polynomials $q_1$ and $q_2$ has degree $9$ and splits in $\Q[x]$ as the product of a constant and the linear factors
$$
x,\:\: x-1,\: x-\tfrac{1}{8},\:\: \left (x-\tfrac{1}{64}\right )^2,\:\: x-\tfrac{13}{2^8},\:\: x-\tfrac{1}{32}, \:\: x-\tfrac{3}{2^7},\: \: x-\tfrac{5}{2^8}.
$$
\end{proof}

\begin{lemma}\label{solgen2}
With the above notation,  let $Q=(x_0, y_0, z_0, t_0)\in \mathbb A^4$ and assume 
$\{z_0, t_0\}\subseteq  \{ 1, 0, \tfrac{1}{8}, \tfrac{1}{32}, \tfrac{1}{64}, \tfrac{13}{2^8}, \tfrac{3}{2^7}, \tfrac{5}{2^8} \}$. Then $Q\in {\Var}(p_1, p_2,p_3, p_4)$  if and only if 
\begin{align*}
Q\in \{&(1,1,1,1),\:\: (0,0,1,1), \:\:\left (\tfrac{1}{8}, \tfrac{1}{8}, 1,1\right ),\:\: \left (\tfrac{1}{64}, \tfrac{1}{64}, \tfrac{1}{64}, \tfrac{1}{64} \right ),\:\:\left (\tfrac{13}{2^8},  \tfrac{13}{2^8}, \tfrac{13}{2^8}, \tfrac{13}{2^8}\right ),\\
&\left (\tfrac{1}{32}, \tfrac{1}{32}, 0,0\right), \:\: \left (\tfrac{1}{64}, \tfrac{1}{64}, \tfrac{1}{8}, \tfrac{1}{8}\right ), \:\:\left (\tfrac{3}{2^7}, \tfrac{3}{2^7}, \tfrac{3}{2^7}, \tfrac{3}{2^7}\right ), \:\: \left  (\tfrac{5}{2^8}, \tfrac{5}{2^8}, \tfrac{13}{2^8}, \tfrac{13}{2^8}\right)\}.
\end{align*}
\end{lemma}
\begin{proof}
The result follows by solving computationally (see~\cite[majorana.s]{code}) the system 
$$
\begin{cases}
p_1(x,y,z_0, t_0)=0 & \\
p_2(x,y,z_0, t_0)=0 & \\
p_3(x,y,z_0, t_0)=0 & \\
p_4(x,y,z_0, t_0)=0 & \\
\end{cases}
$$
for each $\{z_0, t_0\}\subseteq  \{ 1, 0, \tfrac{1}{8}, \tfrac{1}{32}, \tfrac{1}{64}, \tfrac{13}{2^8}, \tfrac{3}{2^7}, \tfrac{5}{2^8} \}$. 
\end{proof}

By~\cite{IPSS10, HRS}, for each $P$ in the  list of Lemma~\ref{solgen2} , $U_{P}$ has a quotient $U_P/I_P$ isomorphic to a Norton-Sakuma algebra $A_P$. On the other hand, computing the dimension of $U_P$ with arguments similar to those used in the proof of Theorem~\ref{symmetric} (see also~\cite[Proof of Lemma 8.6]{HRS}), we see that  $dim_\F(U_P)=dim_\F(A_P)$, whence $U_P$ is isomorphic to one of the nine Norton-Sakuma algebras. 
By Corollary~4.13 in~\cite{KMS}, the algebra $A_P$ is simple, provided it is not of type $2B$, so, in that case, $V$ is isomorphic to $A_P$.  If $A_P$ is of type $2B$, then $A_P$ has dimension $2$ and the result follows trivially since its proper quotients are isomorphic to the Norton-Sakuma algebra of type $1A$.
\section{Acknowledgements}
This work was supported by the ``National Group for Algebraic and Geometric Structures, and their Applications" (GNSAGA - INdAM).


\end{document}